\long\def\commentout#1{}
\numberwithin{equation}{section}
\newtheoremstyle{slplain}
  {\topsep}
  {\topsep}
  {\slshape}
  {0pt}
  {\bfseries}
  {.}
  {0.5em}
  {}
\theoremstyle{slplain}
  \newtheorem{THM}{Theorem}[section]
  \newtheorem{LEM}[THM]{Lemma}
  \newtheorem{PROP}[THM]{Proposition}
  \newtheorem{COR}[THM]{Corollary}
  \newtheorem*{THMC}{Theorem C}
  \newtheorem*{THMD}{Theorem D}
  \newtheorem*{THME}{Theorem E}
  \newtheorem*{LEMA}{Lemma A}
  \newtheorem*{LEMB}{Lemma B}
\theoremstyle{definition}
  \newtheorem{DEF}[THM]{Definition}
  \newtheorem{REM}[THM]{Remark}
\newcommand{\Fraisse}{Fra\"\i ss\'e}
\newcommand{\lex}{\le_{\mathit{lex}}}
\newcommand{\alex}{\le_{\mathit{alex}}}
\newcommand{\refl}{{\mathit{refl}}}
\newcommand\nlongrightarrow{\longrightarrow\kern -1.45em/\kern 0.9em}
\renewcommand{\preceq}{\preccurlyeq}
\newcommand{\UNION}{\bigcup}
\renewcommand{\le}{\leqslant}
\renewcommand{\ge}{\geqslant}
\newcommand{\0}{\varnothing}
\renewcommand{\phi}{\varphi}
\renewcommand{\epsilon}{\varepsilon}
\newcommand{\AAA}{\mathbf{A}}
\newcommand{\BB}{\mathbf{B}}
\newcommand{\CC}{\mathbf{C}}
\newcommand{\DD}{\mathbf{D}}
\newcommand{\KK}{\mathbf{K}}
\newcommand{\NN}{\mathbb{N}}
\newcommand{\QQ}{\mathbb{Q}}
\newcommand{\ZZ}{\mathbb{Z}}
\newcommand{\union}{\cup}
\newcommand{\restr}[2]{\hbox{$#1$}\hbox{$\upharpoonright$}_{#2}}
\newcommand{\Boxed}[1]{\mbox{$#1$}}
\newcommand{\id}{\mathrm{id}}
\newcommand{\Ob}{\mathrm{Ob}}
\newcommand{\high}{\mathrm{ht}}
\newcommand{\LO}{\mathit{LO}}
\newcommand{\Age}{\mathrm{Age}}
\newcommand{\frakA}{\mathfrak{A}}
\newcommand{\frakS}{\mathfrak{S}}
\newcommand{\calA}{\mathcal{A}}
\newcommand{\calB}{\mathcal{B}}
\newcommand{\calC}{\mathcal{C}}
\newcommand{\calD}{\mathcal{D}}
\newcommand{\calE}{\mathcal{E}}
\newcommand{\calH}{\mathcal{H}}
\newcommand{\calK}{\mathcal{K}}
\newcommand{\calQ}{\mathcal{Q}}
\newcommand{\calS}{\mathcal{S}}
\newcommand{\calU}{\mathcal{U}}
\newcommand{\ChEmb}{{\mathbf{Ch}_{\mathit{emb}}}}
\newcommand{\Perm}{{\mathbf{Perm}_{\mathit{emb}}}}
\newcommand{\PowChEmb}[1]{{\mathbf{Ch}^{#1}_{\mathit{emb}}}}
\newcommand{\PQ}{{\mathbf{P}_{\QQ}}}
\newcommand{\PowPQ}[1]{{\mathbf{P}^{#1}_{\QQ}}}
\newcommand{\Aut}{\mathrm{Aut}}
\newcommand{\tp}{\mathrm{tp}}
\newcommand{\Emb}{\mathrm{Emb}}
\newcommand{\im}{\mathrm{im}}
\newcommand{\spec}{\mathrm{spec}}
\newcommand{\tree}[1]{\langle#1\rangle}
\newcommand{\dom}{\mathrm{dom}}
\newcommand{\Br}{\mathrm{Br}}
\title{\Fraisse's Conjecture and big Ramsey degrees\\of structures admitting\\finite monomorphic decomposition\\\textcolor{blue}{(CORRECTED VERSION)}}
\author{%
  Dragan Ma\v sulovi\'c (corresponding author)\\
  University of Novi Sad, Faculty of Sciences\\
  Department of Mathematics and Informatics\\
  Trg Dositeja Obradovi\'ca 3, 21000 Novi Sad, Serbia\\
  e-mail: dragan.masulovic@dmi.uns.ac.rs
\and
  Veljko Tolji\'c\\
  University of Novi Sad, Faculty of Sciences\\
  Department of Mathematics and Informatics\\
  Trg Dositeja Obradovi\'ca 3, 21000 Novi Sad, Serbia\\
  e-mail: veljko.toljic@dmi.uns.ac.rs
}
\begin{document}
\maketitle

\begin{abstract}
  Monomorphic structures (structures with only one kind of $n$-element substructures, for each $n$)
  were introduced and studied by R.\ \Fraisse\ as natural generalizations of chains ($=$ linear orders).
  This notion was later generalized by Pouzet and Thier\'y to structures admitting a finite monomorphic decomposition.
  In this paper we characterize countable structures admitting a finite monomorphic decomposition which
  have finite big Ramsey degrees. The necessary prerequisite for that is the characterization of monomorphic structures with finite big Ramsey degrees.
  Interestingly, both characterizations require deep structural properties of chains.
  \Fraisse's Conjecture (actually, its positive resolution due to Laver) is instrumental in
  the characterization of monomorphic structures with finite big Ramsey degrees, while
  the analysis of big Ramsey combinatorics of structures admitting a finite monomorphic decomposition
  requires a product Ramsey theorem for big Ramsey degrees of chains. We find this last result particularly intriguing because
  big Ramsey degrees are known to exhibit irregular behavior when it comes to general product statements. As a spin-off of the product Ramsey theorem,
  we \sout{provide an alternative proof of Hubi\v cka's result} \textcolor{blue}{prove} that
  the generic \textcolor{blue}{2-dimensional} partial order has finite big Ramsey degrees.

  \textcolor{blue}{%
      In the addendum, we combine a recent result by Oudrar and Pouzet with our
      analysis of finite big Ramsey degrees for structures admitting finite monomorphic decomposition
      to characterize the existence of finite Big Ramsey degrees for all countable relational structures
      whose language has a linear order and age has polynomial growth.
  }

  \bigskip

  \textbf{Key Words and Phrases:} big Ramsey degrees, countable chains, monomorphic structures, structures admitting finite monomorphic decomposition

  \bigskip

  \noindent
  \textbf{AMS Subj. Classification (2020):} 06A05, 05C55
\end{abstract}

\section{Introduction}

Motivated by the Infinite Ramsey Theorem, and prompted by Galvin and Laver, in 1979 Devlin started
the analysis of big Ramsey combinatorics of countable chains ($=$ linearly ordered sets) more complex than $\omega$
by showing that finite chains have finite big Ramsey degrees in $\QQ$ -- the chain of the rationals \cite{devlin}.
This result takes care of all non-scattered countable chains since it is easy to show that bi-embeddable
countable relational structures have the same big Ramsey combinatorics.

Big Ramsey combinatorics of scattered countable chains proved to be challenging in a different manner.
It was shown in \cite{masul-sobot} that a countable ordinal $\alpha$ has finite big Ramsey degrees if and only if
$\alpha < \omega^\omega$. This result was then upgraded to arbitrary countable scattered chains by
Ma\v sulovi\'c in~\cite{masul-fbrd-chains} and Dasilva Barbosa, Ma\v sulovi\'c and Nenadov in~\cite{masul-fbrd-finale},
where countable scattered chains having finite big Ramsey degrees were characterized as precisely those having finite Hausdorff rank.
(All the necessary notions are introduced in Section~\ref{mnmrf.sec.prelim}.)

The analysis of big Ramsey degrees of countable chains naturally generalizes to the class of
monomorphic structures introduced by \Fraisse\ in~\cite{Fraisse-ThRel}.
An infinite relational structure is \emph{monomorphic} if
it has, up to isomorphism, only one $n$-element substructure for each $n \in \NN$.
The paper~\cite{masul-fbrd-finale} shows that a monomorphic structure chainable by a countable chain
with finite big Ramsey degrees has itself finite big Ramsey degrees.
In Section~\ref{mnmrf.sec.mnmrf} we complete the characterization of countable
monomorphic structures with finite big Ramsey degrees by showing that this is also a necessary condition.
Interestingly, Laver's positive resolution of \Fraisse's Conjecture was instrumental in this characterization.

These results extend further to structures admitting a finite monomorphic decomposition,
which were introduced by Pouzet and Thi\'ery in~\cite{pouzet-thiery-I}.
We show in Section~\ref{mnmrf.sec.fmd} that a countable structure admitting a finite monomorphic decomposition
has finite big Ramsey degrees if and only if so does every monomorphic part in its minimal monomorphic decomposition.

The analysis of big Ramsey combinatorics of structures admitting a finite monomorphic decomposition
requires a product Ramsey theorem for big Ramsey degrees for chains, which we prove in Section~\ref{mnmrf.sec.prod-thm}.
Although of technical nature, we find this product Ramsey result particularly intriguing because
big Ramsey degrees are known to exhibit irregular behavior when it comes to general product statements.

We conclude the paper with a spin-off of the product Ramsey theorem for big Ramsey degrees for chains:
in Section~\ref{mnmrf.sec.brd-gen-poset} we \sout{provide an alternative proof of Hubi\v cka's result}
\textcolor{blue}{prove} that the generic \textcolor{blue}{2-dimensional} partial order has finite big Ramsey
degrees~\sout{\cite{hubicka-param-spaces}}.

\section{Preliminaries}
\label{mnmrf.sec.prelim}

\paragraph{Relational structures.}
A \emph{relational language} is a set $L$ of \emph{relation symbols}, each of which comes with its \emph{arity}.
An \emph{$L$-structure} $\calA = (A, L^A)$ is a set $A$ together with a set $L^A$ of
relations on $A$ which are interpretations of the corresponding symbols in $L$.
The underlying set of a structure $\calA$, $\calA_1$, $\calA^*$, \ldots\ will always be denoted by its roman
letter $A$, $A_1$, $A^*$, \ldots\ respectively.
A structure $\calA = (A, L^A)$ is \emph{finite} if $A$ is a finite set.

Let $\calA$ and $\calB$ be $L$-structures and let $f : A \to B$ be a mapping.
We say that $f$ is a \emph{homomorphism} if
      $R^A(a_1, \ldots, a_n) \Rightarrow R^B(f(a_1), \ldots, f(a_n))$
      for all $R \in L$ and $a_1, \ldots, a_n \in A$, where $n$ is the arity of~$R$.
The mapping $f$ is an \emph{embedding}, in symbols $f : \calA \hookrightarrow \calB$, if it is injective and
      $R^A(a_1, \ldots, a_n) \Leftrightarrow R^B(f(a_1), \ldots, f(a_n))$
      for all $R \in L$ and $a_1, \ldots, a_n \in A$, where $n$ is the arity of~$R$.
We say that $\calA$ and $\calB$ are \emph{bi-embeddable} if there exist embeddings $\calA \hookrightarrow \calB$
and $\calB \hookrightarrow \calA$.

Surjective embeddings are \emph{isomorphisms}. We write $\calA \cong \calB$ to denote that $\calA$ and $\calB$ are isomorphic.
An \emph{automorphism} of an $L$-structure $\calA$ is an isomorphism $\calA \to \calA$. Let $\Aut(\calA)$ denote the
\emph{automorphism group} of~$\calA$. A structure $\calA$ is \emph{rigid} if $\Aut(\calA) = \{\id_A\}$,
where $\id_A$ denotes the \emph{identity mapping $A \to A$}.

An $L$-structure $\calA$ is a \emph{substructure} of an $L$-structure
$\calB$, in symbols $\calA \le \calB$, if the identity map is an embedding of $\calA$ into $\calB$.
Let $\calA$ be a structure and $\0 \ne B \subseteq A$. Then $\calA[B] = (B, \restr {L^A} B)$ denotes
the \emph{substructure of $\calA$ induced by~$B$}, where $\restr {L^A} B$ denotes the restriction of
$L^A$ to~$B$.

For a homomorphism $f : \calA \to \calB$ let $\im(f) = \{f(a) : a \in A \} \subseteq B$ denote the
\emph{image of $\calA$ under $f$}. If $f$ is an embedding then $\calB[\im(f)] \cong \calA$.

\paragraph{Big Ramsey degrees.}
Let $L$ be a relational language. 
For $L$-structures $\calA$ and $\calB$ let
$\Emb(\calA, \calB)$ denote the set of all the embeddings $\calA \hookrightarrow \calB$.
For $L$-structures $\calA$, $\calB$, $\calC$ and positive integers $k, t \in \NN$
we write $\calC \longrightarrow (\calB)^{\calA}_{k, t}$ to denote that for every $k$-coloring $\chi : \Emb(\calA, \calC) \to k$
there is an embedding $w \in \Emb(\calB, \calC)$ such that $|\chi(w \circ \Emb(\calA, \calB))| \le t$.
We say that $\calA$ has a \emph{finite embedding big Ramsey degree in $\calC$}
if there exists a positive integer $t$ such that for each $k \in \NN$ we have that
$\calC \longrightarrow (\calC)^{\calA}_{k, t}$.
The least such $t$ is then denoted by $T(\calA, \calC)$. If such a $t$ does not exist
we say that $\calA$ \emph{does not have a finite embedding big Ramsey degree in $\calC$} and write
$T(\calA, \calC) = \infty$. Finally, we say that an infinite $L$-structure $\calC$ \emph{has finite embedding big Ramsey degrees}
if $T(\calA, \calC) < \infty$ for every finite substructure $\calA$ of~$\calC$.

Analogously, for $L$-structures $\calA$ and $\calB$ let
$\binom \calB \calA$ denote the set of all the substructures of $\calB$ that are isomorphic to $\calA$.
For $L$-structures $\calA$, $\calB$, $\calC$ and positive integers $k, t \in \NN$
we write $\calC \overset\sim\longrightarrow (\calB)^{\calA}_{k, t}$ to denote that for every $k$-coloring $\chi : \binom \calC \calA \to k$
there is a $\calB' \in \binom \calC \calB$ such that $|\chi(\binom{\calB'}{\calA})| \le t$.
We say that $\calA$ has a \emph{finite structural big Ramsey degree in $\calC$}
if there exists a positive integer $t$ such that for each $k \in \NN$ we have that
$\calC \overset\sim\longrightarrow (\calC)^{\calA}_{k, t}$.
The least such $t$ is then denoted by $\tilde T(\calA, \calC)$. If such a $t$ does not exist
we say that $\calA$ \emph{does not have a finite structural big Ramsey degree in $\calC$} and write
$\tilde T(\calA, \calC) = \infty$. Finally, we say that an infinite $L$-structure $\calC$ \emph{has finite structural big Ramsey degrees}
if $\tilde T(\calA, \calC) < \infty$ for every finite substructure $\calA$ of~$\calC$.

The two kinds of big Ramsey degrees are closely related:

\begin{THM}\cite{zucker-brd}\label{mnmrf.thm.zucker-bigT}
  Let $L$ be a relational language, let $\calC$ be a countably infinite $L$-structure and $\calA$ a finite $L$-structure such that $\calA \le \calC$.
  Then $T(\calA, \calC) = |\Aut(\calA)| \cdot \tilde T(\calA, \calC)$.
\end{THM}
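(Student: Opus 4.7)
The plan rests on the canonical $|\Aut(\calA)|$-to-$1$ surjection
$\pi : \Emb(\calA, \calC) \to \binom{\calC}{\calA}$ sending $f$ to $\calC[\im(f)]$.
After fixing one reference embedding $e_{\calB'}$ with image $\calB'$ for each $\calB' \in \binom{\calC}{\calA}$,
every element of the fiber $\pi^{-1}(\calB')$ is uniquely of the form $e_{\calB'} \circ \sigma$
with $\sigma \in \Aut(\calA)$. Moreover, for any $w \in \Emb(\calC, \calC)$, writing
$\calC' = \calC[\im(w)] \cong \calC$, post-composition $f \mapsto w \circ f$ is a bijection
$\Emb(\calA, \calC) \to \Emb(\calA, \calC')$ that carries $\pi^{-1}(\calB')$
bijectively onto the fiber above $w(\calB')$, and $\binom{\calC'}{\calA} = \{w(\calB') : \calB' \in \binom{\calC}{\calA}\}$.

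For the inequality $T(\calA, \calC) \le |\Aut(\calA)| \cdot \tilde T(\calA, \calC)$ I would compress colorings along $\pi$.
Given a $k$-coloring $\chi : \Emb(\calA, \calC) \to k$, define a coloring
$\tilde\chi : \binom{\calC}{\calA} \to k^{\Aut(\calA)}$ by $\tilde\chi(\calB')(\sigma) = \chi(e_{\calB'} \circ \sigma)$.
Applying the structural property to this $k^{|\Aut(\calA)|}$-coloring produces $\calC' \cong \calC$
with $|\tilde\chi(\binom{\calC'}{\calA})| \le \tilde T(\calA, \calC)$, and since each compressed value
packages at most $|\Aut(\calA)|$ original values, the set of $\chi$-colors used on
$\Emb(\calA, \calC') = w \circ \Emb(\calA, \calC)$ has size at most $|\Aut(\calA)| \cdot \tilde T(\calA, \calC)$.
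For the reverse inequality I invert the construction: given a $k$-coloring $\tilde\chi$ of $\binom{\calC}{\calA}$,
set $\chi(e_{\calB'} \circ \sigma) = (\tilde\chi(\calB'), \sigma) \in k \times \Aut(\calA)$.
For any $w \in \Emb(\calC, \calC)$, we have $w \circ e_{\calB'} = e_{w(\calB')} \circ \tau_{\calB'}$
for some $\tau_{\calB'} \in \Aut(\calA)$, so the map $\sigma \mapsto \tau_{\calB'} \sigma$ still ranges over all of $\Aut(\calA)$;
hence the second coordinate of $\chi$ on $w \circ \pi^{-1}(\calB')$ hits every element of $\Aut(\calA)$,
and $|\chi(w \circ \Emb(\calA, \calC))| \ge |\Aut(\calA)| \cdot |\tilde\chi(\binom{\calC'}{\calA})|$.
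Choosing $\tilde\chi$ to witness $\tilde T(\calA, \calC)$ gives the matching lower bound.

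The only genuine obstacle is the bookkeeping in the last paragraph, namely that post-composition with $w$
permutes the reference section $\{e_{\calB'}\}_{\calB'}$ nontrivially; this is harmless because the induced
map on each fiber is left multiplication by a fixed $\tau_{\calB'}$, which is a bijection of $\Aut(\calA)$.
The infinite cases, i.e.\ $T(\calA, \calC) = \infty$ iff $\tilde T(\calA, \calC) = \infty$, are subsumed
by the two inequalities because $|\Aut(\calA)|$ is a positive finite integer (as $\calA$ is finite).
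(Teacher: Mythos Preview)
Your argument is correct and is the standard way to establish this identity. Note, however, that the paper does not prove Theorem~\ref{mnmrf.thm.zucker-bigT} at all: it is quoted from \cite{zucker-brd} and used as a black box (see also the last line of the proof of Theorem~\ref{mnmrf.thm.mnmrf}). So there is no ``paper's own proof'' to compare against; your proposal simply supplies the omitted verification, and it does so along exactly the lines one would expect.
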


\paragraph{Chains.} A \emph{chain} is a pair $(A, \Boxed<)$ where $<$ is a strict linear order on $A$.
As usual, $\NN = \{1, 2, 3, \ldots \}$ is the chain of all the positive integers with the usual ordering,
$\omega = \{0, 1, 2, \ldots \}$ is the chain of all the non-negative integers with the usual ordering,
$\ZZ = \{\ldots, -2, -1, 0, 1, 2, \ldots\}$ is the chain of all the integers with the usual ordering,
and $\QQ$ is the chain of all the rationals with the usual ordering.
Every integer $n \in \NN$ can be thought as a finite chain $0 < 1 < \ldots < n-1$.

Let $(A, \Boxed<)$ be a chain and assume that for each $a \in A$ we have a chain $(B_a, \Boxed{<_a})$.
Then the \emph{(indexed) sum of chains} $\sum_{a \in A} B_a$ is the chain on
$\bigcup_{a \in A} (\{a\} \times B_a)$ where the linear order $\prec$ is defined \emph{lexicographically}:
$(a, b) \prec (a', b')$ iff $a < a'$, or $a = a'$ and $b \mathrel{<_a} b'$. Multiplying a chain $B$ by a chain $A$
consists of replacing each element of $A$ by a copy of $B$: $B \cdot A = \sum_{a \in A} B$.
We also say that $B \cdot A$ is the \emph{product} of $B$ and $A$.
Instead of $\sum_{i \in n} B_i$ we shall write $B_0 + B_1 + \ldots + B_{n-1}$.

The class $\LO$ of all countable chains (linear orders) can be preordered by the embeddability relation in a usual way:
write $\calA \preccurlyeq \calB$ if there is an embedding $\calA \hookrightarrow \calB$.
\Fraisse's Conjecture (now a theorem) expresses a deep structural property of the class $\LO$:

\begin{THM}[\Fraisse's Conjecture \cite{Fraisse-conj}]
  $\LO$ is well-quasi-ordered by embeddability.
\end{THM}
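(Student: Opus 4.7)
The plan is to follow Laver's proof, which strengthens the conclusion to a statement about \emph{better-quasi-orders} (bqo), a notion due to Nash-Williams: $\LO$ is in fact bqo under embeddability, and bqo trivially implies wqo. The reason to pass to bqo is that, unlike wqo, bqo-ness is preserved under a wide array of infinitary constructions (transfinite sums, labellings, trees), which is exactly what is needed to carry out an induction along the Hausdorff hierarchy of scattered chains.

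First I would set up the Nash-Williams framework: a \emph{barrier} is a suitably thin family of finite subsets of $\NN$, and a quasi-order $Q$ is bqo iff every map from a barrier into $Q$ is ``good'' in the Nash-Williams sense. I would then record the standard closure properties of bqo: finite products and countable disjoint unions of bqos are bqo, and, crucially, if $Q$ is bqo then the class of $Q$-labelled countable scattered chains is bqo under label-preserving embedding.

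This last closure statement is the heart of the argument. To prove it I would invoke Hausdorff's classical theorem, which builds the countable scattered chains as the least class containing the singleton and closed under $\omega$- and $\omega^*$-indexed sums, stratified by Hausdorff rank. Any hypothetical bad array witnessing failure of bqo is then refined, via the Nash-Williams minimal-bad-array technique, to a bad array that is minimal with respect to the well-founded order given by Hausdorff rank on the components; Hausdorff-decomposing the components of such a minimal bad array produces a bad array of strictly smaller rank, contradicting minimality. Specialising to a trivial $Q$ yields that countable scattered chains are bqo under embeddability.

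Finally I would reduce arbitrary countable chains to the scattered case by condensation: in any countable chain $\calL$, declare $x \sim y$ iff the closed interval between $x$ and $y$ is scattered; the quotient $\calL/{\sim}$ is either a singleton (in which case $\calL$ is already scattered) or a countable chain bi-embeddable with $\QQ$, and $\calL$ is a sum indexed by $\calL/{\sim}$ of countable scattered chains. Viewing $\calL$ as a labelled dense chain whose labels come from the bqo class of countable scattered chains (together with a finite endpoint tag) and applying the closure lemma once more, one concludes that all of $\LO$ is bqo and hence wqo. The principal obstacle, as expected, is the minimal-bad-array argument inside the closure lemma: choosing the correct well-foundedness on ``tagged'' bad arrays and verifying that Hausdorff decomposition strictly decreases complexity is where essentially all of the combinatorial depth of \Fraisse's Conjecture is concentrated.
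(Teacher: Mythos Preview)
The paper does not prove this statement: it is quoted in the Preliminaries as a background result, with the proof attributed to Laver's resolution via better-quasi-orders (the very next theorem in the paper, also merely cited). Your outline is a faithful sketch of Laver's argument and is consistent with what the paper invokes, so in that sense there is nothing to compare against.

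One small wrinkle in your sketch: the closure lemma you state is for $Q$-labelled countable \emph{scattered} chains, but in the final reduction you apply it to a $\QQ$-indexed sum, which is not scattered. Laver's actual argument handles this by proving the stronger closure property for $Q$-labelled arbitrary countable chains (or, equivalently, by a separate short argument that $\QQ$-indexed sums over a bqo label class remain bqo, using that any two such sums with infinitely many nonempty labels are mutually embeddable once the label sequences are comparable). You should flag that the closure lemma needs this slightly broader scope, or else insert the extra step for the dense quotient explicitly; as written, the last paragraph invokes a lemma you only stated for the scattered case.
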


In other words, there are no infinite descending chains and no infinite antichains with respect to~$\preccurlyeq$ in $\LO$.
Some twenty years after the publication of \cite{Fraisse-conj} Laver proved \Fraisse's Conjecture
in~\cite{laver-fraisse-conj} by showing a stronger statement:

\begin{THM}[Laver's Theorem \cite{laver-fraisse-conj}] 
  $LO$ is better-quasi-ordered (and hence, well-quasi-ordered) by embeddability.
\end{THM}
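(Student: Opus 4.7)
The plan is to follow Laver's original route through Nash-Williams' theory of better-quasi-orders (bqo), which strengthens wqo in a way that survives infinitary constructions. Recall that $Q$ is bqo iff there is no bad array $f : B \to Q$ on a barrier $B \subseteq [\omega]^{<\omega}$; this conclusion is strictly stronger than wqo, and crucially the class of bqo's is closed under indexed sums, labeled trees, and $\omega$-sequences, whereas wqo is not. The aim is to prove that $\LO$ is bqo under $\preccurlyeq$, from which the wqo conclusion follows immediately.

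The first step is to reduce to scattered chains. Every countable chain $\calC$ admits a canonical condensation into a lexicographic sum $\sum_{q \in \QQ'} S_q$ with $\QQ' \hookrightarrow \QQ$ and each $S_q$ scattered; since $\QQ$ itself is bqo and bqo's are closed under indexed sums, it suffices to prove that the class $\calS$ of countable scattered chains is bqo. Next, invoking Hausdorff's structure theorem, I would write $\calS = \bigcup_{\alpha < \omega_1} \calS_\alpha$, where $\calS_0$ consists of one-element chains and $\calS_{\alpha+1}$ consists of lexicographic sums $\sum_{i \in \omega} T_i$ or $\sum_{i \in \omega^*} T_i$ whose summands have Hausdorff rank $\le \alpha$. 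An induction on rank then reduces the entire theorem to the following single claim: if $\calQ \subseteq \calS$ is bqo under $\preccurlyeq$, then so is the class of all $\omega$- and $\omega^*$-indexed sums drawn from $\calQ$.

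The crux of the whole argument, and the step I expect to be the main obstacle, is this last claim, which I would prove by the minimal bad array technique. Assume for contradiction a bad array $f : B \to \calS$ whose values are $\omega$- or $\omega^*$-sums of elements of $\calQ$, and choose $f$ minimal in a suitable well-founded ranking compatible with Hausdorff rank. Decompose each $f(s)$ into its summands $T^s_i \in \calQ$, pass to a sub-barrier via Nash-Williams' partition theorem that stabilizes the $\omega$ vs.\ $\omega^*$ type and the combinatorial shape of the decomposition, and from the decomposed data extract either a bad array valued in $\calQ$ (contradicting the hypothesis that $\calQ$ is bqo) or a bad array whose values have strictly smaller rank than those of $f$ (contradicting minimality). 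The subtlety lies in simultaneously handling the $\omega$- and $\omega^*$-sum cases, choosing the ranking so that minimality is well-founded, and aligning the Nash-Williams combinatorics on the barrier side with the coordinate-wise structure on the chain side — precisely the hurdles Laver overcame in \cite{laver-fraisse-conj}.
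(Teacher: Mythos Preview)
The paper does not prove this theorem; it is stated in the Preliminaries section as a background result with a citation to Laver's original paper~\cite{laver-fraisse-conj}, and no argument is given. Your sketch is a reasonable high-level outline of Laver's own route via Nash-Williams' minimal bad array technique, but there is nothing in the present paper to compare it against.
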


We shall also need another deep structural property of countable chains.
A chain $\calA$ is \emph{scattered} if $\QQ \not\hookrightarrow \calA$; otherwise it is \emph{non-scattered}.
In 1908 Hausdorff published a structural characterization of scattered chains~\cite{hausdorff-scat}, which was
rediscovered by Erd\H os and Hajnal in their~1962 paper~\cite{erdos-hajnal-scat}.
Define a sequence $\calH_\alpha$ of sets of chains indexed by ordinals as follows:
\begin{itemize}
\item
  $\calH_0 = \{0, 1\}$ -- the empty chain $\0$ and the 1-element chain 1;
\item
  for an ordinal $\alpha > 0$ let
  $
    \calH_\alpha = \{\sum_{i \in \ZZ} \calS_i : \calS_i \in \UNION_{\beta < \alpha} \calH_\beta \text{ for all } i \in \ZZ \}
  $.
\end{itemize}
Hausdorff then shows in~\cite{hausdorff-scat} that
  for each ordinal $\alpha$ the elements of $\calH_\alpha$ are countable scattered chains; and
  for every countable scattered chain $\calS$ there is an ordinal $\alpha$ such that $\calS$ is order-isomorphic
  to some chain in $\calH_\alpha$.
The least ordinal $\alpha$ such that $\calH_\alpha$ contains a chain order-isomorphic to a
countable scattered chain $\calS$ is referred to as the \emph{Hausdorff rank of $\calS$} and denoted by $r_H(\calS)$.
A countable scattered chain $\calS$ has \emph{finite Hausdorff rank} if $r_H(\calS) < \omega$;
otherwise it has \emph{infinite Hausdorff rank}.

For any chain $\calC$ there is, up to isomorphism, only one $n$-element substructure,
so it is convenient to consider the \emph{big Ramsey spectrum of $\calC$}:
$$
  \spec(\calC) = (T(1, \calC), T(2, \calC), T(3, \calC), \ldots, T(n, \calC), \ldots) \in (\NN \union \{\infty\})^\NN,
$$
where $n$ is the prototypical $n$-element chain $0 < 1 < \ldots < n-1$. We then say that $\calC$
\emph{has finite big Ramsey spectrum}, or simply that \emph{$\spec(\calC)$ is finite}, if $T(n, \calC) < \infty$ for all $n \ge 1$.

\begin{THM}\label{fbrd-finale.thm.MAIN}\cite{masul-fbrd-finale}
  Let $\calC$ be a countable chain. Then $\spec(\calC)$ is finite if and only if $\calC$ is non-scattered, or
  $\calC$ is a scattered chain of finite Hausdorff rank.
\end{THM}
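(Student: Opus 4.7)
I would split the ``$\Leftarrow$'' direction into the non-scattered and scattered cases. If $\calC$ is non-scattered, then $\QQ \hookrightarrow \calC$; since every countable chain embeds into $\QQ$ we also have $\calC \hookrightarrow \QQ$, so $\calC$ and $\QQ$ are bi-embeddable. It is standard (and noted in the introduction) that bi-embeddable countable relational structures share big Ramsey combinatorics, and Devlin's theorem~\cite{devlin} gives $T(n, \QQ) < \infty$ for all $n$, so $\spec(\calC)$ is finite. If $\calC$ is scattered with $\alpha = r_H(\calC) < \omega$, I would proceed by induction on $\alpha$; the base case covers chains built by $\ZZ$-indexed sums of $\0$ and $1$, such as $\omega$, $\omega^*$, and $\ZZ$, for which the classical infinite Ramsey theorem suffices. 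For the inductive step, write $\calC \cong \sum_{i \in \ZZ} \calS_i$ with $r_H(\calS_i) < \alpha$ via Hausdorff's characterisation; by induction each $\calS_i$ has finite big Ramsey spectrum, and the conclusion that $\calC$ does too is exactly what the product Ramsey theorem for big Ramsey degrees developed in Sections~\ref{mnmrf.sec.prod-thm} and~\ref{mnmrf.sec.proof-of-scattered-chain-lemma} is designed to deliver.

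\textbf{Necessity.} For the ``$\Rightarrow$'' direction I would argue the contrapositive: if $\calC$ is scattered with $r_H(\calC) \ge \omega$, then $T(n, \calC) = \infty$ for some~$n$. The plan is to plant inside $\calC$ a witness whose infinite big Ramsey degrees are already known, and then transfer a bad colouring. The natural witnesses are $\omega^\omega$ and $(\omega^\omega)^*$, which fail to have finite big Ramsey spectrum by~\cite{masul-sobot} (together with order-reversal). To locate such a witness, I would iteratively exploit the Hausdorff decomposition $\calC \cong \sum_{i \in \ZZ} \calS_i$: because $r_H(\calC) \ge \omega$, the ranks $r_H(\calS_i)$ are unbounded in $\omega$, so a recursive construction --- supported by Laver's theorem~\cite{laver-fraisse-conj} to extract coherent sequences from the wqo of countable scattered chains --- produces an embedded copy of $\omega^\omega$ or of $(\omega^\omega)^*$. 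A bad $k$-colouring of $n$-element substructures of the witness is then extended to all of $\binom{\calC}{n}$, and one verifies that every isomorphic copy of $\calC$ inside $\calC$ still contains a faithful copy of the witness, so at least as many colours as the witness requires must survive.

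\textbf{Main obstacles.} Two steps look genuinely hard. First, the inductive step of sufficiency depends on a product Ramsey theorem whose proof the paper postpones; the $\ZZ$-indexing turns the Hausdorff sum into an \emph{infinite} product, and as the abstract warns, big Ramsey degrees misbehave notoriously under general product statements, so the formulation of this product theorem and its use with the inductive hypothesis is the real content of sufficiency. Second, the transfer step in the necessity direction is not automatic, because failure of finite big Ramsey degree is not monotone under embeddings: $\omega^\omega$ embeds into $\QQ$, yet $\QQ$ has finite big Ramsey spectrum. Scatteredness of $\calC$ must therefore be used carefully to guarantee that every self-copy of $\calC$ contains a copy of the $\omega^\omega$-witness to which the bad colouring still restricts faithfully. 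I expect this interplay between Laver's theorem, the Hausdorff decomposition, and the colouring-transfer from $\omega^\omega$ to be the main technical hurdle.
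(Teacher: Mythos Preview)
The paper does not prove this theorem; it is quoted from \cite{masul-fbrd-finale} as a prerequisite and no argument for it appears here. The actual proof lives in the cited references \cite{masul-fbrd-chains} and \cite{masul-fbrd-finale}, so there is nothing in the present paper to compare your proposal against.

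That said, your sufficiency argument has a genuine gap as written: it is circular. You propose to handle the inductive step for scattered chains via the product Ramsey theorem of Sections~\ref{mnmrf.sec.prod-thm} and~\ref{mnmrf.sec.proof-of-scattered-chain-lemma}, but the proof of Theorem~\ref{mnmrf.thm.CHAIN-PROD} explicitly invokes Theorem~\ref{fbrd-finale.thm.MAIN} to split each $\calC_i$ into the non-scattered and finite-Hausdorff-rank cases before applying Lemmas~\ref{mnmrf.lem.non-scattered-chain-lemma} and~\ref{mnmrf.lem.scattered-chain-lemma}. So the product theorem here is a \emph{consequence} of Theorem~\ref{fbrd-finale.thm.MAIN}, not a tool for proving it. Even setting the circularity aside, Theorem~\ref{mnmrf.thm.CHAIN-PROD} is a statement about a \emph{finite} list $\calC_1, \ldots, \calC_r$ of chains, whereas your inductive step needs to control a $\ZZ$-indexed sum $\sum_{i \in \ZZ} \calS_i$; the machinery in Section~\ref{mnmrf.sec.prod-thm} does not give that. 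The sufficiency direction in \cite{masul-fbrd-chains,masul-fbrd-finale} instead encodes scattered chains of finite Hausdorff rank by the labelled trees $\sigma \in \frakS$ (the apparatus recalled in Section~\ref{mnmrf.sec.proof-of-scattered-chain-lemma}) and bounds big Ramsey degrees by counting $(n,\sigma)$-types, a route that does not pass through any product statement.
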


\section{Monomorphic structures}
\label{mnmrf.sec.mnmrf}

An infinite relational structure $\calA$ is \emph{monomorphic}~\cite{Fraisse-ThRel} if, for each $n \in \NN$,
all the $n$-element substructures of $\calA$ are isomorphic.
For any monomorphic structure $\calS$ there is, up to isomorphism, only one $n$-element substructure,
so it is convenient to consider the \emph{big Ramsey spectrum of $\calS$}:
$$
  \spec(\calS) = (T(\calA_1, \calS), T(\calA_2, \calS), T(\calA_3, \calS), \ldots) \in (\NN \union \{\infty\})^\NN,
$$
where $\calA_n$ is the unique $n$-element substructure of $\calS$, up to isomorphism. We then say that $\calS$
\emph{has finite big Ramsey spectrum}, or simply that \emph{$\spec(\calS)$ is finite}, if $T(\calA_n, \calS) < \infty$ for all $n \ge 1$.

As demonstrated by \Fraisse\ in~\cite{Fraisse-ThRel}, and then generalized by Pouzet in~\cite{pouzet-mnmf=chainable},
monomorphic first-order structures are closely related to chains.
Let $L = \{ R_i : i \in I \}$ and $M = \{ S_j : j \in J \}$ be first-order relational languages. An $M$-structure
$\calA = (A, S_j^{\calA})_{j \in J}$ is a \emph{reduct} of an $L$-structure $\calA^*  = (A, R_i^{\calA^*})_{i \in I}$
if there exists a set $\Phi = \{ \phi_j : j \in J \}$ of $L$-formulas such that
for each $j \in J$:
$$
  \calA \models S_j[\overline a] \text{ if and only if } \calA^* \models \phi_j[\overline a],
$$
(where $\overline a$ denotes a tuple of elements of the appropriate length).
We then say that $\calA$ is \emph{definable in $\calA^*$ by $\Phi$}, and that it is
\emph{quantifier-free definable in $\calA^*$} if there is a set of quantifier-free formulas $\Phi$
such that $\calA$ is definable in $\calA^*$ by $\Phi$.

A relational structure $\calA = (A, L^\calA)$ is \emph{chainable}~\cite{Fraisse-ThRel}
if there exists a linear order $<$ on $A$ such that $\calA$ is quantifier-free definable in~$(A, \Boxed<)$.
We then say that the linear order~$<$ \emph{chains}~$\calA$.
The following theorem was  proved by
\Fraisse\ for finite relational languages~\cite{Fraisse-ThRel} and for arbitrary relational languages by Pouzet~\cite{pouzet-mnmf=chainable}.

\begin{THM} \cite{Fraisse-ThRel,pouzet-mnmf=chainable}
  An infinite relational structure is monomorphic if and only if it is chainable.
\end{THM}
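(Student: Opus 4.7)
For the $(\Leftarrow)$ direction, suppose $\calA$ is quantifier-free definable in $(A, <)$ by a set $\Phi$ of $\{<\}$-formulas. Given any two $n$-element subsets $B, B' \subseteq A$, let $f \colon B \to B'$ be the unique order-preserving bijection. A quantifier-free $\{<\}$-formula evaluated on a tuple depends only on the order type of that tuple, so $(A, <) \models \phi[\bar b]$ if and only if $(A, <) \models \phi[f(\bar b)]$ for every $\phi \in \Phi$ and every tuple $\bar b$ from $B$. Thus $f$ preserves every relation of $\calA$ and is an isomorphism $\calA[B] \to \calA[B']$, showing that $\calA$ is monomorphic.

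For the $(\Rightarrow)$ direction the strategy is to build a chaining order of $\calA$ by compactness from chainings of finite substructures. Fix an enumeration $A = \{a_0, a_1, \ldots\}$ and set $A_n = \{a_0, \ldots, a_{n-1}\}$. For each $n$ let $T_n$ consist of pairs $(<_n, \Phi_n)$ where $<_n$ is a linear order on $A_n$ and $\Phi_n$ is a choice of defining $\{<\}$-formulas witnessing that $\calA[A_n]$ is chainable by $<_n$. Since quantifier-free $\{<\}$-formulas of arity $k$ correspond to subsets of the finitely many order types on $k$-tuples, the formula part of each $T_n$ ranges over a finite pool depending only on the arities of symbols of $L$; hence each $T_n$ is finite. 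Restriction from $A_{n+1}$ to $A_n$ defines edges of a finitely branching tree rooted at $T_0$; assuming each $T_n$ is nonempty, K\"onig's lemma yields an infinite branch, producing a linear order $<$ on $A$ and a coherent family of $\{<\}$-formulas $\Phi$. A local check then confirms that $\Phi$ chains $\calA$ itself, because the truth of any quantifier-free formula on a tuple is decided already on the finite set of its entries.

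The main obstacle is therefore the finite case: every finite structure $\calB$ in which all $k$-element substructures are pairwise isomorphic, for each $k \le |B|$, is chainable. I would prove this by induction on $|B|$. In the inductive step one picks chainings of two different $(|B|-1)$-element substructures of $\calB$ and glues them into a chaining of all of $\calB$. The gluing is driven by the isomorphisms between $(|B|-1)$-element substructures that monomorphism supplies: each such isomorphism forces the two partial orders to agree on their overlap, or to agree after one of them is reversed, and these are the only possibilities. Verifying that a consistent gluing always exists, and that the resulting linear order really chains $\calB$, is the delicate combinatorial heart of the theorem. Once this finite case is in place, the compactness argument above completes the proof of the $(\Rightarrow)$ direction and hence of the theorem.
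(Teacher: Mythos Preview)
The paper does not prove this theorem at all: it is quoted as a known result of \Fraisse\ (finite languages) and Pouzet (arbitrary relational languages), with references \cite{Fraisse-ThRel,pouzet-mnmf=chainable}, so there is no ``paper's own proof'' to compare against.

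On the merits of your argument: the $(\Leftarrow)$ direction is fine, and the compactness/K\"onig reduction from the infinite to the finite case is a reasonable strategy for a \emph{finite} language (for an infinite $L$ your $T_n$ need not be finite, since $\Phi_n$ is then an infinite tuple of choices; one would need a Tychonoff-type compactness instead, and this is exactly where Pouzet's extension over \Fraisse's original result does real work). The genuine gap, however, is the ``finite case'' itself. Your inductive gluing is not a proof and, as written, does not go through: an isomorphism between two $(|B|-1)$-element substructures need not fix their overlap, so there is no reason the two induced orders must agree or be reverses of each other on that overlap; and even if you manufacture a linear order on $B$ extending both, the inductive hypothesis only supplies defining formulas for the two substructures separately, with no mechanism ensuring a single family $\Phi$ defines all of $\calB$. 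You yourself flag this step as ``the delicate combinatorial heart of the theorem'' without supplying it.

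The standard proofs do not proceed by this kind of naive induction. \Fraisse's argument for finite languages passes through a Ramsey-type step: one colours tuples by their isomorphism type and uses Ramsey's theorem to extract a linear order in which the type of a tuple depends only on its order type, which is precisely what chainability asserts. That idea, or something of comparable strength, is what is missing from your sketch.
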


\begin{THM} \cite{masul-fbrd-finale}
  Let $L$ be a finite relational language and let let $\calS = (S, L^S)$ be a countable monomorphic structure.
  If $\calS$ is chainable by a linear order $<$ on $S$ such that $\spec(S, \Boxed<)$ is finite
  then $\spec(\calS)$ is finite.
\end{THM}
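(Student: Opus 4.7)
My plan is to leverage the fact that $\calS$ is defined in the chain $(S, \Boxed<)$ by a set of quantifier-free formulas, which means every order-preserving injection preserves those formulas. Consequently $\Emb((A, \restr{<}{A}), (S, \Boxed<)) \subseteq \Emb(\calS[A], \calS)$ for every finite $A \subseteq S$, and in particular $\Emb((S, \Boxed<), (S, \Boxed<)) \subseteq \Emb(\calS, \calS)$, so the chain big Ramsey machinery for $(S, \Boxed<)$ can be transported up to $\calS$.

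The first substantive step is to establish a canonical factorization of embeddings. Fix a finite substructure $\calA = \calS[A] \le \calS$ with $n = |A|$; I claim that every $f \in \Emb(\calA, \calS)$ decomposes uniquely as $f = \alpha_f \circ g_f$, where $\alpha_f : (A, \restr{<}{A}) \hookrightarrow (S, \Boxed<)$ is order-preserving and $g_f \in \Aut(\calA)$. Indeed, the image $f(A) \subseteq S$ is an $n$-element set whose induced chain $(f(A), \restr{<}{f(A)})$ is order-isomorphic to $(A, \restr{<}{A})$ via a unique $\alpha_f$; by quantifier-free definability $\alpha_f$ is also an $L$-isomorphism $\calA \to \calS[f(A)]$, so $g_f := \alpha_f^{-1} \circ f$ lies in $\Aut(\calA)$. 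Uniqueness is immediate from the uniqueness of $\alpha_f$, and this yields $|\Emb(\calA, \calS)| = |\Aut(\calA)| \cdot |\Emb((A,\restr{<}{A}), (S,\Boxed<))|$.

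Next I would run the Ramsey argument. Enumerate $\Aut(\calA) = \{g_1, \dots, g_m\}$ with $m = |\Aut(\calA)|$. Given a $k$-coloring $\chi : \Emb(\calA, \calS) \to k$, define a product coloring
$$\chi'(\alpha) = (\chi(\alpha \circ g_1), \dots, \chi(\alpha \circ g_m)) \in k^m \quad\text{for } \alpha \in \Emb((A,\restr{<}{A}), (S, \Boxed<)).$$
Since $\spec(S_<)$ is finite, $T(n, (S,\Boxed<)) < \infty$, so there is $w \in \Emb((S,\Boxed<), (S,\Boxed<))$ for which $w \circ \Emb((A,\restr{<}{A}), (S,\Boxed<))$ realizes at most $T(n,(S,\Boxed<))$ values of $\chi'$. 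By the key observation $w \in \Emb(\calS, \calS)$, and the factorization gives $w \circ \Emb(\calA, \calS) = (w \circ \Emb((A,\restr{<}{A}), (S,\Boxed<))) \circ \Aut(\calA)$. Each value of $\chi'$ unpacks to at most $m$ original colors, so $|\chi(w \circ \Emb(\calA, \calS))| \le m \cdot T(n, (S,\Boxed<))$. This finite bound, independent of $\chi$, witnesses $T(\calA, \calS) \le |\Aut(\calA)| \cdot T(n, (S, \Boxed<)) < \infty$; applying the argument for each finite $\calA \le \calS$ shows $\spec(\calS)$ is finite.

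The crux of the argument is the factorization step — verifying existence and uniqueness of $\alpha_f$ and $g_f$; after that, the product-coloring trick is routine, and the bound we obtain is consistent in form with Theorem~\ref{mnmrf.thm.zucker-bigT}.
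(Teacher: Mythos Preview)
Your argument is correct. The key observations --- that quantifier-free definability forces every order-embedding to be an $L$-embedding, and that each $f \in \Emb(\calA, \calS)$ factors uniquely as (order-embedding)$\circ$(automorphism) --- are exactly what is needed, and your product-coloring step then cleanly yields the bound $T(\calA_n, \calS) \le |\Aut(\calA_n)| \cdot T(n, S_<)$.

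Note, however, that the paper does not supply its own proof of this statement: it is quoted from~\cite{masul-fbrd-finale}. The closest in-paper argument is the $(\ge)$ direction of Theorem~\ref{mnmrf.thm.mnmrf}, which proves the corresponding inequality for \emph{structural} big Ramsey degrees. There the same core idea appears (an isomorphism of chains $S_\sqsubset \to S'_\sqsubset$ is automatically an isomorphism $\calS \to \calS'$ by quantifier-free definability), but because structural degrees color $n$-subsets rather than embeddings, the $\Aut(\calA_n)$ factorization and product coloring are unnecessary --- one simply reinterprets a coloring of $\binom{S}{n}$ as a coloring of $n$-element subchains and transfers the witness directly. Your proof is thus the natural embedding-degree analogue of that argument, with the automorphism bookkeeping accounting precisely for the factor $|\Aut(\calA_n)|$ relating $T$ and $\tilde T$ (Theorem~\ref{mnmrf.thm.zucker-bigT}).
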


\begin{REM}\label{mnmrf.rem.2}
Let $L$ be a relational language and $\calS = (S, L^S)$ a countable monomorphic $L$-structure.
In the proof of the main result of this section, Theorem~\ref{mnmrf.thm.mnmrf},
we shall focus on structural big Ramsey degrees $\tilde T(\calA, \calS)$ rather than embedding big Ramsey degrees
$T(\calA, \calS)$. Namely, if $\calA_n$ is the (unique up to isomorphism) $n$-element substructure of $\calS$, then
the sets
$$
  \binom \calS {\calA_n} = \{\calA \le \calS : \calA \cong \calA_n\}
  \text{\quad and \quad}
  \binom S n = \{ A \subseteq S : |A| = n\}
$$
are in an obvious bijective correspondence. Therefore, the following is a convenient reformulation of the notion of structural
big Ramsey degrees for monomorphic structures:
\begin{quote}
    Let $n, t \in \NN$. Then $\tilde T(\calA_n, \calS) \le t$ if for every $k \in \NN$
    and every coloring $\chi : \binom Sn \to k$ there is a substructure $\calS' \le \calS$ such that $\calS' \cong \calS$ and
    $\Big|\big\{\chi(A) : A \in \binom{S'}{n}\big\}\Big| \le t$.
\end{quote}
\end{REM}

\begin{THM}\label{mnmrf.thm.mnmrf}
  Let $L$ be a relational language and let let $\calS = (S, L^S)$ be a countable monomorphic structure.
  For each $n \in \NN$ let $\calA_n$ denote the (up to isomorphism) unique $n$-element substructure of $\calS$.
  Let $\sqsubset$ be a minimal chain (up to bi-embeddability) which chains $\calS$ and let $S_\sqsubset = (S, \Boxed \sqsubset)$.
  Then
  $$
    \tilde T(n, S_\sqsubset) = \tilde T(\calA_n, \calS).
  $$
  Consequently, $T(\calA_n, \calS) = |\Aut(\calA_n)| \cdot T(n, S_\sqsubset)$.
\end{THM}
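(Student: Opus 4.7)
The plan is to establish $\tilde T(n, S_\sqsubset) = \tilde T(\calA_n, \calS)$; the claim about embedding degrees then follows by applying Theorem~\ref{mnmrf.thm.zucker-bigT} on both sides, using that the finite chain $n$ has trivial automorphism group, so $T(n, S_\sqsubset) = \tilde T(n, S_\sqsubset)$. Using Remark~\ref{mnmrf.rem.2} together with its obvious chain analogue, I will treat both sides as statements about $k$-colorings $\chi : \binom Sn \to k$: the task is to locate a ``large'' subset $S' \subseteq S$ on which few colors appear, where ``large'' means $\calS[S'] \cong \calS$ on one side and $(S', \restr{\sqsubset}{S'}) \cong (S, \sqsubset)$ on the other.

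The linchpin for both inequalities is a single observation: because $\sqsubset$ chains $\calS$ via a set $\Phi$ of \emph{quantifier-free} formulas, the same $\Phi$ shows that for every $S' \subseteq S$ the restricted chain $(S', \restr{\sqsubset}{S'})$ chains the induced substructure $\calS[S']$. In particular, any chain isomorphism $(S', \restr{\sqsubset}{S'}) \cong (S, \sqsubset)$ upgrades automatically to an isomorphism $\calS[S'] \cong \calS$. The inequality $\tilde T(\calA_n, \calS) \le \tilde T(n, S_\sqsubset)$ is immediate from this: given $\chi$, apply the chain big Ramsey degree of $S_\sqsubset$ to obtain $S' \subseteq S$ with $(S', \restr{\sqsubset}{S'}) \cong (S, \sqsubset)$ and few colors, and the observation promotes $S'$ to the required $\calS[S'] \cong \calS$.

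The reverse inequality is where the minimality of $\sqsubset$ does real work, and this will be the main step to execute carefully. Given $\chi$, the structural Ramsey degree of $\calS$ produces $\calS' = \calS[S'] \le \calS$ with $\calS' \cong \calS$ and few colors on $\binom{S'}n$. The observation tells me that $(S', \restr{\sqsubset}{S'})$ chains $\calS'$; transporting this chain along any isomorphism $\calS' \to \calS$ yields a chain $(S, <)$ on the full set $S$ that chains $\calS$ and is order-isomorphic to $(S', \restr{\sqsubset}{S'})$. Since $(S, <) \cong (S', \restr{\sqsubset}{S'}) \preceq (S, \sqsubset)$ via inclusion, the minimality of $\sqsubset$ in the bi-embeddability preorder on chains of $S$ chaining $\calS$ forces the reverse embedding $(S, \sqsubset) \hookrightarrow (S', \restr{\sqsubset}{S'})$. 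Its image $S'' \subseteq S'$ satisfies $(S'', \restr{\sqsubset}{S''}) \cong (S, \sqsubset)$ and $\binom{S''}n \subseteq \binom{S'}n$, so the color bound transfers, giving $\tilde T(n, S_\sqsubset) \le \tilde T(\calA_n, \calS)$.

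The only subtle point I would flag is the use of minimality: one must recognise that the pulled-back chain $(S, <)$ is a legitimate competitor for $\sqsubset$, i.e.\ that minimality of $\sqsubset$ up to bi-embeddability implicitly ranges over the isomorphism-closed class of chains on $S$ that chain $\calS$. Once this is granted, the entire argument is a routine transfer via quantifier-free definability, requiring no combinatorial input beyond the Ramsey hypotheses themselves.
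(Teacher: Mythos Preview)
Your proposal is correct and follows essentially the same route as the paper. The paper also identifies $\binom{S}{n}$ for both structures, proves $\tilde T(\calA_n, \calS) \le \tilde T(n, S_\sqsubset)$ by upgrading a chain copy to an $\calS$-copy via quantifier-free definability, and for the reverse inequality takes an embedding $f : \calS \hookrightarrow \calS$ with image $S'$, pulls back $\restr{\sqsubset}{S'}$ along $f$ to a new order $<_f$ on $S$ (your transported $(S,<)$), verifies that $<_f$ chains $\calS$, and then invokes minimality to obtain a chain copy of $S_\sqsubset$ inside $S'$; the consequence for embedding degrees is derived exactly as you indicate.
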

\begin{proof}
  Let us first establish the equality of structural big Ramsey degrees.

  $(\ge)$
  Let $t = \tilde T(n, S_\sqsubset) \in \NN$. Take any coloring $\chi : \binom Sn \to k$, $k \in \NN$,
  of all the $n$-element substructures of $\calS$. Note that $\chi$ can also be thought of as a coloring
  of all the $n$-element subchains of $S_\sqsubset$ (Remark~\ref{mnmrf.rem.2}). Since $t = \tilde T(n, S_\sqsubset)$,
  there is a subchain $S'_\sqsubset \le S_\sqsubset$ such that $S'_\sqsubset \cong S_\sqsubset$ and
  \begin{equation}\label{mnmrf.eq.0-0}
    \textstyle
    \Big|\big\{\chi(A) : A \in \binom{S'}{n}\big\}\Big| \le t.
  \end{equation}
  Let $f : S_\sqsubset \to S'_\sqsubset$ be an isomorphism and let
  $\calS'$ be the substructure of $\calS$ induced by $S'$. Clearly, $f$ is an isomorphism $\calS \to \calS'$
  because $\calS$ is quantifier-free definable in $S_\sqsubset$, and $\calS'$ is quantifier-free definable in $S'_\sqsubset$.
  Therefore, $\calS'$ is an isomorphic copy of $\calS$ satisfying~\eqref{mnmrf.eq.0-0}.

  $(\le)$
  Let $t = \tilde T(\calA_n, \calS) \in \NN$. Take any coloring $\chi : \binom Sn \to k$, $k \in \NN$,
  of all the $n$-element subchains of $S_\sqsubset$. Note that $\chi$ can also be thought of as a coloring of 
  of all the $n$-element substructures of $\calS$ (Remark~\ref{mnmrf.rem.2}). Since $t = \tilde T(\calA_n, \calS)$, there is a substructure
  $\calS' \le \calS$ such that $\calS' \cong \calS$ and
  \begin{equation}\label{mnmrf.eq.0}
    \textstyle
    \Big|\big\{\chi(A) : A \in \binom{S'}{n}\big\}\Big| \le t.
  \end{equation}
  Let $\calS' = (S', L^{S'})$ and let $f : \calS \hookrightarrow \calS$ be an embedding such that $\im(f) = S'$.
  Define $<_f$ on $S$ as follows:
  \begin{equation}\label{mnmrf.eq.1}
    a \mathrel{<_f} b \text{ if and only if } f(a) \sqsubset f(b).
  \end{equation}

  \medskip

  Claim. $<_f$ chains $\calS$.

  Proof. Let $R \in L$ be a relational symbol of arity $h$. Since $\sqsubset$ chains $\calS$ there is a quantifier-free formula
  $\phi(x_1, \ldots, x_h)$ in the language $\{\sqsubset\}$ such that for every $b_1, \ldots, b_h \in S$:
  $$
    \calS \models R[b_1, \ldots, b_h] \text{ iff } S_\sqsubset \models \phi[b_1, \ldots, b_h].
  $$
  Then
  \begin{align*}
    \calS \models R[a_1, \ldots, a_h]
    &\text{ iff } \calS \models R[f(a_1), \ldots, f(a_h)] && [\text{$f$ is an embedding}]\\
    &\text{ iff } S_\sqsubset \models \phi[f(a_1), \ldots, f(a_h)] && [\sqsubset \text{ chains } \calS]\\
    &\text{ iff } S_{<_f} \models \phi[a_1, \ldots, a_h] && [\text{induction and \eqref{mnmrf.eq.1}}]
  \end{align*}
  Therefore, $<_f$ chains $\calS$ using the same quantifier-free formulas. This proves the Claim.

  \medskip

  So, $<_f$ chains $\calS$ and $f : S_{<_f} \hookrightarrow S_\sqsubset$ is an embedding of chains.
  Since $\sqsubset$ is a minimal chain (up to bi-embeddability) which chains $\calS$, it follows that 
  $S_\sqsubset$ and $S_{<_f}$ are bi-embeddable, so there is an embedding
  $g : S_\sqsubset \hookrightarrow S_{<_f}$. Note that
  $$
    S_\sqsubset \quad \overset g \hookrightarrow \quad S_{<_f} \quad \overset f \hookrightarrow \quad S_{\sqsubset}
  $$
  is an embedding. Hence, $S'' = \im(f \circ g)$ induces a subchain of $S_{\sqsubset}$ which is
  isomorphic to $S_{\sqsubset}$. On the other hand, $S'' \subseteq \im(f) = S'$. From \eqref{mnmrf.eq.0} it now follows that
  \begin{equation*}
    \textstyle
    \Big|\big\{\chi(A) : A \in \binom{S''}{n}\big\}\Big| \le t.
  \end{equation*}
  This completes the proof of the first part of the statement.

  \bigskip

  As for the second part of the statement note that $\tilde T(n, S_\sqsubset) = T(n, S_\sqsubset)$ because chains are rigid structures,
  while $T(\calA_n, \calS) = |\Aut(\calA_n)| \cdot \tilde T(\calA_n, \calS)$ holds in general (see~\cite{zucker-brd}).
\end{proof}

\begin{COR}
  Let $L$ be a relational language and let $\calS = (S, L^S)$ be a countable monomorphic structure.
  Then $\spec(\calS)$ is finite if and only if there exists a linear order $<$ on $S$
  which chains $\calS$ and with the property that $\spec(S_<)$ is finite.
\end{COR}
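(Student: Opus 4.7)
The plan is to deduce the corollary directly from Theorem \ref{mnmrf.thm.mnmrf}, using Laver's Theorem only to produce a minimal chaining order.

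For the ($\Leftarrow$) direction I would not need minimality at all. I claim that for \emph{any} linear order $<$ on $S$ that chains $\calS$ one has $\tilde T(\calA_n, \calS) \le \tilde T(n, S_<)$, by exactly the $(\ge)$ half of the proof of Theorem \ref{mnmrf.thm.mnmrf}. Indeed, a coloring $\chi : \binom{S}{n} \to k$ is simultaneously a coloring of the $n$-element substructures of $\calS$ and of the $n$-element subchains of $S_<$ (Remark \ref{mnmrf.rem.2}); the hypothesis that $\spec(S_<)$ is finite produces a subchain $S'_< \cong S_<$ on which $\chi$ takes at most $\tilde T(n, S_<)$ values, and because $<$ chains $\calS$ the induced substructure $\calS[S']$ is isomorphic to $\calS$. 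Hence $\tilde T(\calA_n, \calS) < \infty$ for every $n$, and Theorem \ref{mnmrf.thm.zucker-bigT} together with the obvious finiteness of $|\Aut(\calA_n)|$ yields $T(\calA_n, \calS) < \infty$, so $\spec(\calS)$ is finite.

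For the ($\Rightarrow$) direction I would invoke Laver's Theorem: since $\LO$ is well-quasi-ordered by $\preccurlyeq$, the family of linear orders on $S$ that chain $\calS$ is $\preccurlyeq$-well-founded, hence admits a minimal element $\sqsubset$ up to bi-embeddability. Applying Theorem \ref{mnmrf.thm.mnmrf} to $\sqsubset$ gives $\tilde T(n, S_\sqsubset) = \tilde T(\calA_n, \calS)$, which is finite for every $n$ by assumption. Because every finite chain is rigid, $T(n, S_\sqsubset) = \tilde T(n, S_\sqsubset) < \infty$, so $\sqsubset$ is the required linear order.

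There is no substantive obstacle: all the work sits inside Theorem \ref{mnmrf.thm.mnmrf}. The only non-routine ingredient is Laver's Theorem, used to extract the minimal $\sqsubset$ in the forward direction; once this is in place, the corollary amounts to separating the two directions and translating between $T$ and $\tilde T$ via rigidity of chains and Theorem \ref{mnmrf.thm.zucker-bigT}.
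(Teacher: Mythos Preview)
Your proposal is correct and matches the intended argument: the paper states the corollary without proof precisely because it follows from Theorem~\ref{mnmrf.thm.mnmrf} in the way you describe, with Laver's Theorem supplying the minimal chaining order~$\sqsubset$ needed for the $(\Rightarrow)$ direction. Your observation that the $(\Leftarrow)$ direction only uses the $(\ge)$ half of the proof of Theorem~\ref{mnmrf.thm.mnmrf} and hence requires no minimality assumption is also accurate.
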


\section{Structures admitting a finite monomorphic decomposition}
\label{mnmrf.sec.fmd}

In this section we extend the above results to first-order structures admitting a finite monomorphic decomposition.
Recall that structures admitting a finite monomorphic decomposition were introduced by Pouzet and Thi\'ery:

\begin{DEF} \cite{pouzet-thiery-I}
  A \emph{monomorphic decomposition} of a relational structure $\calS = (S, L^S)$ 
  is a partition $\{E_i : i \in I\}$ of $S$ satisfying the following:
  for all finite $X, Y \subseteq S$, if $|X \cap E_i| = |Y \cap E_i|$ for all $i \in I$ then $\calS[X] \cong \calS[Y]$.
  A relational structure $\calS$ \emph{admits a finite monomorphic decomposition}
  if there exists a monomorphic decomposition $\{E_i : i \in I\}$ of $\calS$ with $I$ finite.
\end{DEF}

Note that in a monomorphic decomposition each $\calS[E_i]$ is a monomorphic structure, $i \in I$.

\begin{PROP}\cite[Proposition 1.6]{pouzet-thiery-I}\label{mnmrf.prop.min}
  Every relational structure $\calS$ has a monomorphic decomposition $\{B_i : i \in I\}$ such that every other
  monomorphic decomposition of $\calS$ is a refinement of $\{B_i : i \in I\}$.
  This monomorphic decomposition of $\calS$ will be referred to as \emph{minimal}.
\end{PROP}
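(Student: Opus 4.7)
The plan is to build the desired minimum decomposition explicitly from an equivalence relation, and then verify the two needed properties. Define a relation $\equiv$ on $S$ by stipulating $a \equiv b$ if and only if there exists \emph{some} monomorphic decomposition of $\calS$ having $a$ and $b$ in a common part; let $\sim$ denote the transitive closure of $\equiv$ and let $\{B_i : i \in I\}$ be the set of $\sim$-classes. The fact that every other monomorphic decomposition refines $\{B_i : i \in I\}$ is then immediate: if $\pi = \{E_j : j \in J\}$ is any monomorphic decomposition and $a, b \in E_j$, then $a \equiv b$ by definition, whence $a \sim b$, so $a$ and $b$ lie in the same $B_i$; thus each $E_j$ is contained in a unique $B_i$.

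The substantive task is to show that $\{B_i : i \in I\}$ is itself a monomorphic decomposition. The key intermediate step is the following \textbf{Swap Lemma}: if $a \sim b$ and $X \subseteq S$ is finite with $a \in X$ and $b \notin X$, then $\calS[X] \cong \calS[(X \setminus \{a\}) \cup \{b\}]$. I would prove this by induction on the length $k$ of a (simple) chain $a = a_0 \equiv a_1 \equiv \dots \equiv a_k = b$ witnessing $a \sim b$, with each consecutive pair lying in a common part of some monomorphic decomposition $\pi_j$. The base case $k=1$ is direct: swapping $a_0$ for $a_1$ does not alter the $\pi_1$-cell counts of $X$, so the monomorphic decomposition property of $\pi_1$ yields the required isomorphism. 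For the inductive step I would split on whether $a_1 \in X$. If $a_1 \notin X$, first swap $a_0 \mapsto a_1$ in $X$ via $\pi_1$, then apply the inductive hypothesis to the chain $a_1, \ldots, a_k$ inside the resulting set to swap $a_1 \mapsto b$. If instead $a_1 \in X$, reverse the order: first apply the inductive hypothesis to the chain $a_1, \ldots, a_k$ inside $X$ (legitimate since $a_1 \in X$, $b \notin X$) to swap $a_1 \mapsto b$, then in the resulting set, which has $a_0$ present and $a_1$ absent, swap $a_0 \mapsto a_1$ via $\pi_1$.

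Given the Swap Lemma, the monomorphic decomposition property for $\{B_i\}$ follows by iteration: for any finite $X, Y \subseteq S$ with $|X \cap B_i| = |Y \cap B_i|$ for every $i$, one observes that $(X \setminus Y) \cap B_i$ and $(Y \setminus X) \cap B_i$ have the same cardinality for every $i$, fixes a bijection between them, and applies the Swap Lemma one element at a time along these bijections to transform $X$ into $Y$ through a finite sequence of sets whose induced substructures are pairwise isomorphic; concatenating these isomorphisms yields $\calS[X] \cong \calS[Y]$. I expect the main obstacle to be the case analysis in the inductive step of the Swap Lemma: one must verify at each substep that the element being swapped out really is present in the current set and the element being swapped in really is absent, and the choice between the two orderings of the swaps is precisely what guarantees this bookkeeping works. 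Once that distinction is set up cleanly, everything else is routine.
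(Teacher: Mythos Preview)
The paper does not prove this proposition; it simply quotes it as \cite[Proposition~1.6]{pouzet-thiery-I}, so there is no in-paper proof to compare against. Your argument is correct and self-contained. The Swap Lemma goes through exactly as you describe: the case split on whether $a_1 \in X$ is precisely what keeps the bookkeeping honest, and simplicity of the witnessing chain supplies the inequalities $a_0 \ne a_1$, $a_1 \ne b$, $a_0 \ne b$ needed at each step. The iterated swap from $X$ to $Y$ then works because $X \setminus Y$ and $Y \setminus X$ are disjoint, so at stage $j$ the set $X_{j-1} = (X \setminus \{a_1,\dots,a_{j-1}\}) \cup \{b_1,\dots,b_{j-1}\}$ always contains $a_j$ and omits $b_j$. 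One small point worth making explicit: $\equiv$ is reflexive because the partition of $S$ into singletons is trivially a monomorphic decomposition, so $\sim$ really is an equivalence relation.

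For context, the original argument of Pouzet and Thi\'ery is ``intrinsic'': they define the blocks directly via an equivalence on $S$ (two points are equivalent when replacing one by the other in any finite substructure preserves the isomorphism type) and then check that this partition is monomorphic and that every monomorphic decomposition refines it. Your approach is ``extrinsic'': you build the common coarsening of all monomorphic decompositions, which makes the refinement property immediate by construction, and then use the Swap Lemma to recover the monomorphic property. Both routes yield the same partition; yours trades a slightly longer combinatorial lemma for a one-line proof of minimality.
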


\begin{LEM}\label{mnmrf.lem.MAIN}
  Let $\calS = (S, L^S)$  be a relational structure that admits a finite monomorphic decomposition, and let $\{B_1, B_2, \ldots, B_q\}$
  be the minimal monomorphic decomposition of $\calS$.
  
  $(a)$ Let $\calS' = (S', L^{S'})$ be a substructure of $\calS$ which is isomorphic to~$\calS$.
  Then $\{S' \cap B_1, S' \cap B_2, \ldots, S' \cap B_q\}$ is a minimal monomorphic decomposition of~$\calS'$.

  $(b)$ For every embedding $f : \calS \hookrightarrow \calS$ there is a permutation $\sigma$ of $\{1, 2, \ldots, q\}$ such that
  $f(B_i) \subseteq B_{\sigma(i)}$ for all $1 \le i \le q$.
\end{LEM}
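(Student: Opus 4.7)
The plan is to prove (a) directly from the definition of a monomorphic decomposition and Proposition~\ref{mnmrf.prop.min}, and then to derive (b) and (c) in one stroke by applying (a) to the induced substructure on the image of the embedding.

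For part (a), the first step is to verify that $\calP := \{S' \cap B_i : 1 \le i \le s,\ S' \cap B_i \ne \0\}$ is a monomorphic decomposition of $\calS'$. This is essentially automatic: if $X, Y \subseteq S'$ are finite with $|X \cap (S' \cap B_i)| = |Y \cap (S' \cap B_i)|$ for all $i$, then, because $X, Y \subseteq S'$, one has $|X \cap B_i| = |Y \cap B_i|$, so $\calS[X] \cong \calS[Y]$ by the defining property of $\{B_i\}$, and hence $\calS'[X] \cong \calS'[Y]$ since $\calS'$ is induced. The second step is to show that $\calP$ is in fact the minimal monomorphic decomposition of $\calS'$. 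For this, let $\calM'$ denote the minimal monomorphic decomposition of $\calS'$ guaranteed by Proposition~\ref{mnmrf.prop.min}. Since $\calS' \cong \calS$, $\calM'$ has exactly $s$ parts. By the universal property, $\calP$ is a refinement of $\calM'$, so $\calP$ has at least $s$ parts; it has at most $s$ parts by construction; hence exactly $s$, and being a refinement of $\calM'$ with the same number of parts, $\calP$ coincides with $\calM'$. In particular every $S' \cap B_i$ is non-empty.

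Parts (b) and (c) then follow at once from (a), together with the uniqueness of the minimal monomorphic decomposition (immediate from Proposition~\ref{mnmrf.prop.min}: two minimal decompositions each refine the other, hence coincide) and the observation that any isomorphism of relational structures transports minimal monomorphic decompositions to minimal monomorphic decompositions. Given $f : \calS \hookrightarrow \calS$, set $\calS' = \calS[f(S)]$, so that $f : \calS \to \calS'$ is an isomorphism. Part (a) identifies the minimal monomorphic decomposition of $\calS'$ as $\{f(S) \cap B_i : 1 \le i \le s\}$, and transporting back through $f^{-1}$ produces the minimal monomorphic decomposition of $\calS$ as $\{f^{-1}(f(S) \cap B_i) : 1 \le i \le s\}$. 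By uniqueness this set coincides with $\{B_1, \ldots, B_s\}$, so there is a permutation $\sigma$ of $\{1, \ldots, s\}$ with $f^{-1}(f(S) \cap B_{\sigma(i)}) = B_i$; equivalently, $f(B_i) = f(S) \cap B_{\sigma(i)} \subseteq B_{\sigma(i)}$. This is precisely (c), and (b) is its immediate weakening.

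The only delicate point I expect is the minimality step of part (a), which simultaneously uses the isomorphism $\calS \cong \calS'$ to pin down the number of parts, the universal property of Proposition~\ref{mnmrf.prop.min} to force $\calP$ to have at least $s$ parts, and the elementary fact that a refinement with the same number of parts is an equality. Once this is in hand, everything else is a mechanical application of the universal property.
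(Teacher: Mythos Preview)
Your proof is correct and follows essentially the same route as the paper: show that the traces $S'\cap B_i$ form a monomorphic decomposition, use Proposition~\ref{mnmrf.prop.min} together with $\calS'\cong\calS$ to force it to be the minimal one with exactly $s$ parts, and then pull back through the isomorphism $f$ to match up blocks. Your organization is slightly tighter than the paper's --- you obtain nonemptiness of each $S'\cap B_i$ as a consequence of the counting argument rather than by a separate contradiction, and you derive (b) and (c) simultaneously by transporting the minimal decomposition through $f^{-1}$ instead of first getting a refinement and then arguing surjectivity of $\sigma$ --- but the underlying mechanism is identical.
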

\begin{proof}
  $(a)$ Let us first show that the non-empty sets among $\{S' \cap B_1, S' \cap B_2, \ldots, S' \cap B_q\}$ 
  form a monomorphic decomposition of $\calS'$. Without loss of generality we can assume that there is a $p \in \{1, 2, \ldots, q\}$
  such that $S' \cap B_i = \0$ for all $i < p$ and $S' \cap B_j \ne \0$ for $j \ge p$.
  Take any finite $X, Y \subseteq S'$ such that $|X \cap (S' \cap B_j)| = |Y \cap (S' \cap B_j)|$ for all $j \ge p$.
  Then $|X \cap B_j| = |Y \cap B_j|$ for all $j \ge p$, while $|X \cap B_i| = 0 = |Y \cap B_i|$ for $i < p$
  because $X, Y \subseteq S'$ and $S' \cap B_i = \0$.
  Therefore $\calS[X] \cong \calS[Y]$ because $\{B_1, B_2, \ldots, B_q\}$ is a monomorphic decomposition of $\calS$. Hence,
  $\calS'[X] = \calS[X] \cong \calS[Y] = \calS'[Y]$.
  
  With this at hand, it is now easy to show that $\{S' \cap B_1, S' \cap B_2, \ldots, S' \cap B_q\}$ is a partition of $S'$:
  if one of $S' \cap B_i$ is empty, the nonempty blocks among $\{S' \cap B_1, S' \cap B_2, \ldots, S' \cap B_q\}$
  would constitute a monomorphic decomposition of $\calS'$ with less than $q$ blocks, so $\calS$,
  being isomorphic to $\calS'$, would also have a monomorphic decomposition with less than $q$ blocks -- a contradiction.

  Now that we know that $\{S' \cap B_1, S' \cap B_2, \ldots, S' \cap B_q\}$ is a partition of $S'$, the argument we started the proof with
  ensures that this is a minimal monomorphic decomposition of $\calS'$.

  \medskip

  $(b)$ Let $f : \calS \hookrightarrow \calS$ be an embedding and let $\calS' = \calS[\im(f)]$ be the image of $\calS$ under $f$.
  Clearly, $\calS' \cong \calS$. Just as a notational convenience let $\calS' = (S', L^{S'})$.
  Then $\{S' \cap B_1, S' \cap B_2, \ldots, S' \cap B_q\}$ is a
  monomorphic decomposition of~$\calS'$ by~$(a)$, so $\{f^{-1}(S' \cap B_1), f^{-1}(S' \cap B_2), \ldots, f^{-1}(S' \cap B_q)\}$
  is a monomorphic decomposition of $\calS$ because the codomain restriction $\restr f {\calS'} : \calS \to \calS'$ is an isomorphism.
  Since $\{B_1, B_2, \ldots, B_q\}$ is the minimal monomorphic decomposition of $\calS$ it follows that
  $\{f^{-1}(S' \cap B_1), f^{-1}(S' \cap B_2), \ldots, f^{-1}(S' \cap B_q)\}$ is finer than $\{B_1, B_2, \ldots, B_q\}$.
  Note that the two partitions of $S$ have the same number of blocks. Therefore,
  $$
    \{B_1, B_2, \ldots, B_q\} = \{f^{-1}(S' \cap B_1), f^{-1}(S' \cap B_2), \ldots, f^{-1}(S' \cap B_q)\}.
  $$
  Because of that, for every $i$ there is a $j$ such that $B_i = f^{-1}(S' \cap B_j)$, or, equivalently,
  $f(B_i) = S' \cap B_j \subseteq B_j$. Moreover, given $i$ this $j$ is unique because
  $\{B_1, B_2, \ldots, B_q\}$ is a partition of $S$. So, define a mapping $\sigma : \{1, 2, \ldots, q\} \to \{1, 2, \ldots, q\}$
  so that $\sigma(i) = j$ if and only if $f(B_i) \subseteq B_j$. To show that $\sigma$ is a bijection it suffices to note that $\sigma$ is surjective,
  which follows from $(a)$.
\end{proof}

\begin{THM}
  Let $\calS$ be a relational structure admitting a finite monomorphic decomposition,
  let $\{B_1, B_2, \ldots, B_q\}$ be the minimal monomorphic decomposition of $\calS$,
  and let $\calB_i = \calS[B_i]$, $1 \le i \le q$.
  If $\calS$ has finite big Ramsey degrees then so does every $\calB_i$, $1 \le i \le q$.
\end{THM}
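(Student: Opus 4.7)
The plan is to prove the quantitative inequality $T(\calA, \calB_i) \le T(\calA, \calS)$ for every finite substructure $\calA \le \calB_i$, which immediately yields the theorem. The key leverage is Lemma~\ref{mnmrf.lem.MAIN}(c): every self-embedding of $\calS$ permutes the blocks of the minimal monomorphic decomposition, and iterating a single embedding to the order of the induced permutation turns a self-embedding of $\calS$ into one that restricts to a self-embedding of $\calB_i$, while only shrinking the colour count.

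First I would fix a finite $\calA \le \calB_i$, set $t = T(\calA, \calS) \in \NN$, and take any $k$-colouring $\chi : \Emb(\calA, \calB_i) \to k$. Extend $\chi$ to a $(k+1)$-colouring $\chi^* : \Emb(\calA, \calS) \to k+1$ by assigning one fresh colour to every $g$ with $\im(g) \not\subseteq B_i$, and setting $\chi^*(g) = \chi(g)$ otherwise. Finiteness of the big Ramsey degree of $\calA$ in $\calS$ then produces an embedding $w : \calS \hookrightarrow \calS$ with $|\chi^*(w \circ \Emb(\calA, \calS))| \le t$.

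Next I invoke Lemma~\ref{mnmrf.lem.MAIN}(c) to obtain a permutation $\sigma$ of $\{1, \ldots, s\}$ with $w(B_j) \subseteq B_{\sigma(j)}$ for every $j$. Letting $m$ be the order of $\sigma$ in the symmetric group, a trivial induction on the exponent gives $w^m(B_j) \subseteq B_j$ for every $j$, so $w^m|_{B_i}$ is an embedding $\calB_i \hookrightarrow \calB_i$ (since $\calB_i$ is the substructure induced by $B_i$). The colour bound survives the iteration because $w^m \circ \Emb(\calA, \calS) \subseteq w \circ \Emb(\calA, \calS)$, hence $|\chi^*(w^m \circ \Emb(\calA, \calS))| \le t$. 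For any $g \in \Emb(\calA, \calB_i)$ the composition $w^m \circ g$ has image inside $B_i$, so $\chi^*(w^m \circ g) = \chi(w^m \circ g)$; restricting therefore yields $|\chi(w^m|_{B_i} \circ \Emb(\calA, \calB_i))| \le t$, which is the required bound.

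The step I expect to need the most care is the passage from $w$ to $w^m$: the property of sending $B_i$ into itself is not automatic for $w$, and becomes so only after iteration; and this move preserves the colour-count bound precisely because $\chi^*(w^m \circ \Emb(\calA, \calS))$ is a \emph{subset} (not a superset) of $\chi^*(w \circ \Emb(\calA, \calS))$. Apart from this observation the argument is a clean reduction whose whole strength rests on Lemma~\ref{mnmrf.lem.MAIN}(c); one could equivalently phrase everything in terms of structural degrees via Theorem~\ref{mnmrf.thm.zucker-bigT}, but the embedding formulation makes the iteration $w \mapsto w^m$ entirely transparent.
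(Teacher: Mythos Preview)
Your proof is correct and follows essentially the same approach as the paper: extend the colouring from $\calB_i$ to $\calS$ with a dummy colour, apply Lemma~\ref{mnmrf.lem.MAIN}(c) to the self-embedding $w$ of $\calS$, and iterate $w$ until it sends $B_i$ into itself. The only cosmetic differences are that the paper phrases the argument as a contrapositive (a bad colouring of $\calB_j$ yields a bad colouring of $\calS$) and iterates $w$ to the length of the cycle through $j$ rather than to the full order of $\sigma$; your direct version has the small advantage of displaying the quantitative bound $T(\calA,\calB_i)\le T(\calA,\calS)$ explicitly.
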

\begin{proof}
  Suppose, to the contrary, that some $\calB_j$ does not have finite big Ramsey degrees. 
  Then there is a finite relational structure $\calA$ such that $T(\calA, \calB_j) = \infty$.

  Let us show that $T(\calA, \calS) = \infty$. Take any $t \in \NN$. Since $T(\calA, \calB_j) = \infty$
  there is a $k \in \NN$ and a coloring $\chi_j : \Emb(\calA, \calB_j) \to k$
  such that for every embedding $w : \calB_j \hookrightarrow \calB_j$ we have that $|\chi_j(w \circ \Emb(\calA, \calB_j))| \ge t$.
  Let $\iota_j : \calB_j \hookrightarrow \calS$ be the canonical embedding $\iota_j(x) = x$ and
  define $\chi : \Emb(\calA, \calS) \to k$ as follows:
  for a $g \in \Emb(\calA, \calS)$, if $g(A) \subseteq B_j$ then $g = \iota_j \circ f$ for some
  $f \in \Emb(\calA, \calB_j)$ and we put $\chi(g) = \chi_j(f)$; otherwise we let $\chi(g) = 0$.

  Take any $w : \calS \hookrightarrow \calS$. By Lemma~\ref{mnmrf.lem.MAIN}~$(b)$, there is a permutation $\sigma$ such that
  $w(B_i) \subseteq B_{\sigma(i)}$ for all $1 \le i \le q$. Therefore, there is a $k \ge 1$ (the length of the cycle that contains $j$ in the
  cyclic representation of $\sigma$) such that $w^k(B_j) \subseteq B_j$.
  Let $w^* : \calB_j \hookrightarrow \calB_j$ be the restriction of $w^k$, that is, an embedding defined so that $w^*(x) = w^k(x)$ for all $x \in B_j$.
  Note that:
  \begin{equation}\label{mnmrf.eq.S2}
    \iota_j \circ w^* = w^k \circ \iota_j
  \end{equation}
  Let us show that $\chi_j(w^* \circ \Emb(\calA, \calB_j)) \subseteq \chi(w \circ \Emb(\calA, \calS))$:
  \begin{align*}
    \chi_j(w^* \circ \Emb(\calA, \calB_j))
    &= \chi(\iota_j \circ w^* \circ \Emb(\calA, \calB_j)) && [\text{definition of $\chi$}]\\
    &= \chi(w^k \circ \iota_j \circ \Emb(\calA, \calB_j)) && [\text{\eqref{mnmrf.eq.S2}}]\\
    &\subseteq \chi(w^k \circ \Emb(\calA, \calS))\\
    &\subseteq \chi(w \circ \Emb(\calA, \calS))           && [k \ge 1]
  \end{align*}
  The choice of $\chi_j$ ensures that $|\chi_j(w^* \circ \Emb(\calA, \calB_j))| \ge t$, hence
  $$
   |\chi(w \circ \Emb(\calA, \calS))| \ge t.
  $$
  This concludes the proof.
\end{proof}

Let us now list a few more notions and results from \cite{pouzet-thiery-I} that will be needed for the proof of the main result of this section.

\begin{DEF} \cite{pouzet-thiery-I}
    We say that $f$ is a \emph{local automorphism} of $\calS$ if $f$ is an isomorphism
    between two substructures of $\calS$ (finite or infinite).
\end{DEF}

\begin{THM}\cite[Theorem 1.8]{pouzet-thiery-I}\label{mnmrf.thm.local-iso}
  A relational structure $\calS = (S, L^S)$ admits a finite monomorphic decomposition if and only if there exists a linear order
  $<$ on $S$ and a finite partition $\{E_1, \ldots, E_r\}$ of $S$ into intervals of $(S, \Boxed<)$ such that every local isomorphism of
  $(S, \Boxed<)$ which preserves each interval is a local isomorphism of $\calS$.
\end{THM}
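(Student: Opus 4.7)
The plan is to handle the two directions separately. The backward direction is nearly immediate; the forward direction requires the minimal monomorphic decomposition together with a delicate block-respecting argument.

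For the backward direction, suppose a linear order $<$ on $S$ and a finite partition $\{E_1, \ldots, E_s\}$ into $<$-intervals are given as in the statement. I would show that $\{E_1, \ldots, E_s\}$ is already a monomorphic decomposition of $\calS$. Given finite $X, Y \subseteq S$ with $|X \cap E_i| = |Y \cap E_i|$ for all $i$, enumerating $X \cap E_i$ and $Y \cap E_i$ in $<$-order and pairing them blockwise produces a bijection $f : X \to Y$ which is order-preserving (because the $E_i$ are intervals) and plainly interval-preserving. By hypothesis $f$ is a local isomorphism of $\calS$, so $\calS[X] \cong \calS[Y]$, confirming the claim.

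For the forward direction, invoke Proposition~\ref{mnmrf.prop.min} to obtain the minimal monomorphic decomposition $\{B_1, \ldots, B_s\}$. A direct application of the defining property shows that each $\calS[B_i]$ is itself monomorphic; using the Fra\"\i ss\'e--Pouzet characterization quoted earlier (or, for finite $B_i$, any order that realises $\calS[B_i]$ as a chainable structure) pick an order $<_i$ on $B_i$ chaining $\calS[B_i]$. Fix any linear order on $\{1, \ldots, s\}$ and let $<$ be the resulting concatenation of the chains $(B_i, <_i)$, so that each $B_i$ is an interval of $(S, <)$; set $E_i := B_i$. It remains to show that every interval-preserving local isomorphism $f : X \to Y$ of $(S, <)$ is a local isomorphism of $\calS$, and this reduces to the following key lemma: for every $R \in L$ of arity $h$ and every pair of tuples $(x_1, \ldots, x_h), (y_1, \ldots, y_h) \in S^h$ in which $x_j$ and $y_j$ lie in the same block, and within every $B_i$ the subtuples of $x$'s and of $y$'s occur in the same $<_i$-order, one has $R^\calS(x_1, \ldots, x_h) \Leftrightarrow R^\calS(y_1, \ldots, y_h)$.

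Proving this key lemma is the main obstacle I foresee. Writing $X := \{x_1, \ldots, x_h\}$ and $Y := \{y_1, \ldots, y_h\}$, the profiles agree and the monomorphic decomposition yields some isomorphism $g : \calS[X] \to \calS[Y]$, but a priori $g$ need not be the indexwise map $x_j \mapsto y_j$. I would try to close the gap in two steps. First, use the minimality of $\{B_1, \ldots, B_s\}$ to force $g$ to respect blocks, i.e.\ $g(X \cap B_i) = Y \cap B_i$ for every $i$; this is a finitary analogue of Lemma~\ref{mnmrf.lem.MAIN}(b), and the argument I envisage proceeds by contradiction, extracting from any block-crossing $g$ a coarser monomorphic decomposition of $\calS$. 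Second, once $g$ respects the blocks, each restriction $X \cap B_i \to Y \cap B_i$ is an isomorphism of subchains of $(B_i, <_i)$ because $<_i$ chains $\calS[B_i]$, and the rigidity of finite chains forces it to coincide with the indexwise map. Hence $g$ equals the indexwise map $x_j \mapsto y_j$, which is therefore an isomorphism, completing the proof.
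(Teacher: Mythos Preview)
Note first that the paper does not actually prove this theorem: it is quoted from \cite{pouzet-thiery-I}, and the paragraph following the statement only sketches how to \emph{construct} the linear order in the forward direction (refine the finite blocks of a monomorphic decomposition to singletons, then invoke \cite[Theorem~2.25]{pouzet-thiery-I} to chain the infinite blocks). So there is no proof in the paper to compare against, only that remark.

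Your backward direction is correct. The forward direction, however, has a genuine gap in Step~2. You assert that once $g$ respects blocks, each restriction $g|_{X \cap B_i}$ ``is an isomorphism of subchains of $(B_i,<_i)$ because $<_i$ chains $\calS[B_i]$''. The implication runs the wrong way: chainability of $\calS[B_i]$ by $<_i$ says that every local isomorphism of $(B_i,<_i)$ is a local isomorphism of $\calS[B_i]$, not the converse. If $\calS[B_i]$ carries only trivial relations, any linear order chains it, yet \emph{every} bijection between finite subsets of $B_i$ is a local isomorphism of $\calS[B_i]$ --- most are not order-preserving, so nothing forces $g|_{X\cap B_i}$ to be the indexwise map. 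Step~1 is also shaky: take $\calS$ to be the disjoint union of two infinite cliques $B_1,B_2$ (this is the minimal decomposition); a two-point set meeting each block once carries the edgeless structure, and the isomorphism $g$ supplied by the decomposition may perfectly well swap the blocks. Your proposed contradiction (``extract a coarser monomorphic decomposition'') does not follow from the existence of a single such block-crossing~$g$.

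More fundamentally, chaining each $\calS[B_i]$ in isolation tells you nothing about relations spanning several blocks; your argument never explains why the particular $<_i$ you pick on $B_i$ should be compatible with the ambient structure~$\calS$. The construction the paper alludes to handles this by refining finite blocks to singletons (so that interval-preserving maps fix them pointwise) and by appealing to a stronger chainability statement, \cite[Theorem~2.25]{pouzet-thiery-I}, which concerns chainability of a block \emph{inside}~$\calS$, not merely of the induced substructure $\calS[B_i]$. That stronger input is precisely what your Step~2 is trying to do without.
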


We shall also need a special case of the rather technical \cite[Theorem 2.25]{pouzet-thiery-I} (for our purposes only the items $(i)$ and $(ii)$
of \cite[Theorem 2.25]{pouzet-thiery-I} with $F = \0$ suffice:)

\begin{THM}\label{mnmrf.thm.pouzet-thiery-thm2.25} (cf.\ \cite[Theorem 2.25]{pouzet-thiery-I})
  Let $\calE = (E, (\rho_i)_{i \in I})$ be a relational structure. Let us consider the following properties:
  \begin{enumerate}[$(i)$]
  \item $\calE$ is chainable;
  \item $\calE$ is monomorphic.
  \end{enumerate}
  Then $(i) \Rightarrow (ii)$. If $E$ is infinite then $(ii) \Rightarrow (i)$.
\end{THM}

Assume that $\calS = (S, L^S)$ admits a finite monomorphic decomposition. It is actually easy to construct a linear
order on $S$ whose existence Theorem~\ref{mnmrf.thm.local-iso} postulates. Take any finite monomorphic decomposition of $\calS$
and refine its finite blocks to singletons to get a monomorphic decomposition $\{E_1, \ldots, E_r\}$.
The infinite blocks in this decomposition are chainable (Theorem~\ref{mnmrf.thm.pouzet-thiery-thm2.25}), so 
on each $E_i$ there is a linear order $<_i$ such that $<_i$ chains $\calS[E_i]$, $1 \le i \le r$.
Then a lexicographical sum, in any order, of the chains $(E_i, \Boxed{<_i})$ yields a linear order on $S$ for
which the $E_i$'s are intervals and every local isomorphism preserving each of the intervals $E_i$ is a local automorphism of~$\calS$.

\begin{THM}\label{mnmrf.thm.MAIN-MNMRF-2}
  Let $\calS$ be a relational structure admitting a finite monomorphic decomposition,
  let $\{B_1, B_2, \ldots, B_q\}$ be the minimal monomorphic decomposition of $\calS$ and let $\calB_i = \calS[B_i]$, $1 \le i \le q$.
  If every $\calB_i$, $1 \le i \le q$, has finite big Ramsey degrees then so does $\calS$.
\end{THM}
\begin{proof}
  Let $\{E_1, \ldots, E_r\}$ be a finite monomorphic decomposition of $\calS$ obtained from $\{B_1, B_2, \ldots, B_q\}$
  by refining its finite blocks to singletons, and preserving the infinite blocks.
  For convenience, assume that $E_1$, \ldots, $E_t$ are the infinite blocks in the new decomposition, and that $E_{t+1}$, \ldots, $E_r$ are the singletons.
  According to the remark above, the infinite blocks in this decomposition are chainable (Theorem~\ref{mnmrf.thm.pouzet-thiery-thm2.25}), so 
  on each $E_i$ there is a linear order $<_i$ such that $<_i$ chains $\calE_i = \calS[E_i]$, $1 \le i \le t$.
  Without loss of generality we can take $<_i$ to be a minimal linear order (up to bi-embeddability) which chains $\calE_i$.
  Then, according to Theorem~\ref{mnmrf.thm.mnmrf} the fact that each $\calE_i$ has finite big Ramsey degrees implies that each chain
  $(E_i, \Boxed{<_i})$ has finite big Ramsey degrees, $1 \le i \le t$.

  Let $(S, \Boxed\sqsubset)$ be the lexicographical sum of the chains $(E_i, \Boxed{<_i})$, $1 \le i \le r$, where the ordering on the
  singletons is the trivial one:
  $$
    (S, \Boxed\sqsubset) = (E_1, \Boxed{<_1}) \oplus \ldots \oplus (E_r, \Boxed{<_r}).
  $$
  This is a linear order on $S$ in which each $E_i$ is an interval and with the property that every local isomorphism preserving each
  of the intervals $E_i$ is a local automorphism of~$\calS$ (Theorem~\ref{mnmrf.thm.local-iso}).

  For a finite $\calA \le \calS$ and non-negative integers $n_1$, $n_2$, \ldots, $n_r$ let
  $$\textstyle
    {\binom \calS \calA}^{E_1, \ldots, E_r}_{n_1, \ldots, n_r} = \big\{(A', L^{A'}) \in \binom \calS \calA : |A' \cap E_i| = n_i, 1 \le i \le r\big\}.
  $$

  \bigskip

  Claim 1. For every $\calS' = (S', L^{S'}) \in \binom \calS \calS$ and every $1 \le i \le r$ we have that $S' \cap E_i \ne \0$.
  Moreover, $\{S' \cap B_1, \ldots, S' \cap B_q\}$ is a minimal monomorphic decomposition of $\calS'$ and
  $\{S' \cap E_1, \ldots, S' \cap E_r\}$ is a finite monomorphic decomposition of $\calS'$ obtained from $\{S' \cap B_1, \ldots, S' \cap B_q\}$
  by refining its finite blocks to singletons, and preserving the infinite blocks.

  Proof. Since $\calS'$ is an isomorphic copy of $\calS$ there is an embedding $f : \calS \hookrightarrow \calS$
  such that $\im(f) = S'$. Then by Lemma~\ref{mnmrf.lem.MAIN}~$(b)$ there is a permutation $\sigma : \{1, 2, \ldots, q\} \to \{1, 2, \ldots, q\}$
  of the blocks $\{B_1, \ldots, B_q\}$ of the minimal monomorphic decomposition such that
  $f(B_i) \subseteq B_{\sigma(i)}$ for all $1 \le i \le q$. This immediately implies that
  $S'$ intersects every infinite $B_i$, and that $f$ permutes the points that belong to finite blocks.
  Therefore, $S'$ intersects every infinite $E_j$ (because the two decompositions have identical infinite blocks), and $S'$
  contains all the points that belong to finite blocks. 
  
  The second part of the claim follows directly from Lemma~\ref{mnmrf.lem.MAIN}~$(a)$ and the first part of the claim. This proves Claim~1.
  
  \bigskip

  Claim 2. For every finite $\calA \le \calS$ and every choice of non-negative integers $n_1$, $n_2$, \ldots, $n_r$ 
  there exists a positive integer $N$ such that for every $k \ge 1$ and every coloring
  $\chi : {\binom \calS \calA}^{E_1, \ldots, E_r}_{n_1, \ldots, n_r} \to k$ there is a substructure $\calS' \in \binom \calS\calS$
  satisfying
  $$\textstyle
    \Big|\chi\Big({\binom {\calS'} \calA}^{E_1, \ldots, E_r}_{n_1, \ldots, n_r}\Big)\Big| \le N.
  $$

  Proof. Take any $(A', L^{A'}) \in {\binom \calS \calA}^{E_1, \ldots, E_r}_{n_1, \ldots, n_r}$ and let
  $A'_i = A' \cap E_i$, $1 \le i \le r$. For $1 \le i \le r$ we have that $A'_i$ is a subset of the monomorphic structure $\calE_i$
  which is chained by the linear order $<_i$. Therefore, $A'_i$ uniquely determines the embedding
  $f_{A'_i} : n_i \hookrightarrow (E_i, \Boxed{<_i})$ defined so that $\im(f_{A'_i}) = A'_i$.
  Consequently, every $(A', L^{A'}) \in {\binom \calS \calA}^{E_1, \ldots, E_r}_{n_1, \ldots, n_r}$ uniquely determines a tuple of
  embeddings
  $$
    (f_{A'_1}, f_{A'_2}, \ldots, f_{A'_r}) \text{ where } f_{A'_i} : n_i \hookrightarrow (E_i, \Boxed{<_i}).
  $$
  Let $N$ be a positive integer provided by Corollary~\ref{mnmrf.cor.CHAIN-PROD} for the non-negative integers
  $n_1$, \ldots, $n_r$ and chains $(E_1, \Boxed{<_1})$, \ldots, $(E_r, \Boxed{<_r})$.

  Take any coloring $\chi : {\binom \calS \calA}^{E_1, \ldots, E_r}_{n_1, \ldots, n_r} \to k$ and define
  $$
    \gamma : \Emb(n_1, (E_1, \Boxed{<_1})) \times \ldots \times \Emb(n_r, (E_r, \Boxed{<_r})) \to k
  $$
  by $\gamma(f_{A'_1}, f_{A'_2}, \ldots, f_{A'_r}) = \chi(A', L^{A'})$ and
  $\gamma(g_1, \ldots, g_r) = 0$ if $(g_1, \ldots, g_r) \ne (f_{A'_1}, f_{A'_2}, \ldots, f_{A'_r})$
  for all $(A', L^{A'}) \in {\binom \calS \calA}^{E_1, \ldots, E_r}_{n_1, \ldots, n_r}$.
  By Corollary~\ref{mnmrf.cor.CHAIN-PROD} there are embeddings $w_i : (E_i, \Boxed{<_i}) \hookrightarrow (E_i, \Boxed{<_i})$
  $1 \le i \le r$ such that
  $$
    \big|\gamma\big((w_1 \circ \Emb(n_1, (E_1, \Boxed{<_1}))) \times \ldots \times (w_r \circ \Emb(n_r, (E_r, \Boxed{<_r})))\big)\big| \le N.
  $$
  Let
  $
    w^* = w_1 \oplus \ldots \oplus w_r
  $
  be the lexicographic sum of the embeddings $w_1$, \ldots, $w_r$. Clearly, $w^*$ is an embedding $(S, \Boxed\sqsubset) \hookrightarrow (S, \Boxed\sqsubset)$,
  and hence a local isomorphism of $(S, \Boxed\sqsubset)$ which preserves the intervals $E_i$.
  By Theorem~\ref{mnmrf.thm.local-iso} we then know that $w^*$ is a local automorphism of~$\calS$. Moreover,
  $w^*$ is an embedding $\calS \hookrightarrow \calS$. Let $S' = \im(w^*)$ and $\calS' = \calS[S']$.
  Clearly, $\calS' \in \binom \calS \calS$. Let us show that
  $$\textstyle
    \Big|\chi\Big({\binom {\calS'} \calA}^{E_1, \ldots, E_r}_{n_1, \ldots, n_r}\Big)\Big| \le N
  $$
  by showing that
  $$\textstyle
    \chi\Big({\binom {\calS'} \calA}^{E_1, \ldots, E_r}_{n_1, \ldots, n_r}\Big) \subseteq \gamma\big((w_1 \circ \Emb(n_1, (E_1, \Boxed{<_1}))) \times \ldots \times (w_r \circ \Emb(n_r, (E_r, \Boxed{<_r})))\big).
  $$
  Take any $\calA' = (A', L^{A'}) \in {\binom {\calS'} \calA}^{E_1, \ldots, E_r}_{n_1, \ldots, n_r}$, let $A'_i = A' \cap E_i$
  and let $A''_i = w_i^{-1}(A'_i)$, $1 \le i \le r$. The mapping $h : A'_1 \cup \ldots \cup A'_r \to A''_1 \cup \ldots \cup A''_r$
  given by $h(x) = w_i^{-1}(x)$ for $x \in A'_i$ is clearly a local isomorphism of $(S, \Boxed\sqsubset)$ which
  preserves the intervals $E_i$, $1 \le i \le r$. By Theorem~\ref{mnmrf.thm.local-iso} we then know that $h$ is a local automorphism of~$\calS$.
  Therefore, if we let $\calA'' = \calS[A''_1 \cup \ldots \cup A''_r]$, we have that $\calA'' \cong \calA'$,
  $\calA'' \in {\binom {\calS} \calA}^{E_1, \ldots, E_r}_{n_1, \ldots, n_r}$ and $f_{A'_i} = w_i \circ f_{A''_i}$, $1 \le i \le r$.
  Therefore, by definition of $\gamma$:
  \begin{align*}
    \chi(\calA') & = \gamma(f_{A'_1}, \ldots, f_{A'_r})\\
                 & = \gamma(w_1 \circ f_{A''_1}, \ldots, w_r \circ f_{A''_r})\\
                 & \in \gamma\big((w_1 \circ \Emb(n_1, (E_1, \Boxed{<_1}))) \times \ldots \times (w_r \circ \Emb(n_r, (E_r, \Boxed{<_r})))\big).
  \end{align*}
  This concludes the proof of Claim~2.

  \bigskip
  
  Moving on to the proof of the theorem, let $\calA = (A, L^{A}) \le \calS$ be a finite substructure of $\calS$.
  Let $\tau_1$, \ldots, $\tau_m$ be the enumeration of all the possible $r$-tuples of non-negative integers
  $(n_1, \ldots, n_r)$ such that $n_1 + \ldots + n_r = |A|$. Then $\binom \calS \calA$ can be partitioned as:
  \begin{equation}\label{mnmrf.eq.disj-union}
    \textstyle
    \binom \calS \calA = \bigcup\limits_{j=1}^m {\binom \calS \calA}_{\tau_j}^{E_1, \ldots, E_r}.
  \end{equation}
  According to Claim~2 for each $\tau_j$, $1 \le j \le m$, there is a positive integer $N_j$ satisfying the conclusion of
  the claim. Let us show that
  $$
    \tilde T(\calA, \calS) \le N_1 + \ldots + N_m.
  $$
  Take any coloring $\chi : \binom \calS \calA \to k$ and let $\chi_1 : {\binom \calS \calA}^{E_1, \ldots, E_r}_{\tau_1} \to k$
  be the restriction of $\chi$. According to Claim~2 there is an $\calS_1 \in \binom \calS \calS$ such that
  $$\textstyle
    \Big|\chi_1\Big({\binom {\calS_1} \calA}^{E_1, \ldots, E_r}_{\tau_1}\Big)\Big| \le N_1.
  $$
  Let $\chi_2 : {\binom {\calS_1} \calA}^{E_1, \ldots, E_r}_{\tau_2} \to k$ be another restriction of $\chi$.
  Claim~1 ensures that Claim~2 applies to this setting as well, so there is an $\calS_2 \in \binom {\calS_1} \calS$ such that
  $$\textstyle
    \Big|\chi_2\Big({\binom {\calS_2} \calA}^{E_1, \ldots, E_r}_{\tau_2}\Big)\Big| \le N_2.
  $$
  And so on. In the final step we get an $\calS_m \in \binom {\calS_{m-1}} \calS$ such that
  $$\textstyle
    \Big|\chi_m\Big({\binom {\calS_m} \calA}^{E_1, \ldots, E_r}_{\tau_m}\Big)\Big| \le N_m.
  $$
  Let us show that $\Big|\chi\Big(\binom{\calS_m}{\calA}\Big)\Big| \le N_1 + \ldots + N_m$.
  Using \eqref{mnmrf.eq.disj-union} applied to $\binom{\calS_m}{\calA}$, the fact that
  ${\binom{\calS_m}{\calA}}^{E_1, \ldots, E_r}_{\tau_j} \subseteq {\binom{\calS_j}{\calA}}^{E_1, \ldots, E_r}_{\tau_j}$
  for all $1 \le j \le m$ and the fact that $\chi_j$ is an appropriate restriction of $\chi$ we get:
  \begin{align*}
    \textstyle \Big|\chi\Big(\binom{\calS_m}{\calA}\Big)\Big|
    &\textstyle = \Big|\chi\Big(\bigcup_{j=1}^m {\binom{\calS_m}{\calA}}^{E_1, \ldots, E_r}_{\tau_j}\Big)\Big|\\
    &\textstyle = \sum_{j=1}^m \Big|\chi\Big({\binom{\calS_m}{\calA}}^{E_1, \ldots, E_r}_{\tau_j}\Big)\Big|\\
    &\textstyle \le \sum_{j=1}^m \Big|\chi\Big({\binom{\calS_j}{\calA}}^{E_1, \ldots, E_r}_{\tau_j}\Big)\Big|\\
    &\textstyle = \sum_{j=1}^m \Big|\chi_j\Big({\binom{\calS_j}{\calA}}^{E_1, \ldots, E_r}_{\tau_j}\Big)\Big| \le \sum_{j=1}^m N_j.
  \end{align*}
  This completes the proof of the theorem.    
\end{proof}

\section{A product Ramsey theorem for chains}
\label{mnmrf.sec.prod-thm}

In this section we prove the product Ramsey theorem for big Ramsey degrees of countable chains (Theorem~\ref{mnmrf.thm.CHAIN-PROD}),
whose mild modification (Corollary~\ref{mnmrf.cor.CHAIN-PROD}) was instrumental in the proof of Theorem~\ref{mnmrf.thm.MAIN-MNMRF-2}.
We find this product Ramsey theorem particularly intriguing because big Ramsey degrees are known to exhibit irregular behavior when it comes
to general product statements. The proof proceeds in several stages, so let us start building the necessary infrastructure.

\subsection{The tools}

As it turns out, the convenient language for our purposes is the language of category theory.
In order to specify a \emph{category} $\CC$ one has to specify
a class of objects $\Ob(\CC)$, a class of morphisms $\hom_\CC(A, B)$ for all $A, B \in \Ob(\CC)$,
the identity morphism $\id_A$ for all $A \in \Ob(\CC)$, and for every triple $A, B, C \in \Ob(\CC)$
the composition of morphisms~$\Boxed\circ : \hom_\CC(B, C) \times \hom_\CC(A, B) \to \hom_\CC(A, C)$
so that $\id_B \circ f = f = f \circ \id_A$ for all $f \in \hom_\CC(A, B)$, and
$(h \circ g) \circ f = h \circ (g \circ f)$ for all $f \in \hom_\CC(A, B)$, $g \in \hom_\CC(B, C)$ and $h \in \hom_\CC(C, D)$.

A category $\CC$ is \emph{locally small} if $\hom_\CC(A, B)$ is a set for all $A, B \in \Ob(\CC)$.
Sets of the form $\hom_\CC(A, B)$ are then referred to as \emph{homsets}. If $\CC$ can be deduced from the context we simply write $\hom(A, B)$.

We say that objects $X, Y \in \Ob(\CC)$ are \emph{hom-equivalent} if $\hom(X, Y) \ne \0$ and $\hom(Y, X) \ne \0$.

For categories $\CC_1$, \ldots, $\CC_n$, $n \in \NN$,
there is the \emph{product category} $\CC_1 \times \ldots \times \CC_n$ whose objects are
tuples $(A_1, \ldots, A_n)$ where $A_i \in \Ob(\CC_i)$, $1 \le i \le n$,
morphisms are tuples $(f_1, \ldots, f_n) : (A_1, \ldots, A_n) \to (B_1, \ldots, B_n)$, where
$f_i \in \hom_{\CC_i}(A_i, B_i)$, $1 \le i \le n$, and the composition of morphisms
is carried out componentwise: $(g_1, \ldots, g_n) \circ (f_1, \ldots, f_n) = (g_1 \circ f_1, \ldots, g_n \circ f_n)$.
Clearly, if $(A_1, \ldots, A_n)$ and $(B_1, \ldots, B_n)$ are objects of $\CC_1 \times \ldots \times \CC_n$
then
\begin{multline*}
  \hom_{\CC_1 \times \ldots \times \CC_n}\big((A_1, \ldots, A_n), (B_1, \ldots, B_n)\big) = \\
  = \hom_{\CC_1}(A_1, B_1) \times \ldots \times \hom_{\CC_n}(A_n, B_n).
\end{multline*}
We write $\CC^n$ for $\CC \times \ldots \times \CC$ ($n$ times).

The notion of big Ramsey degrees we have seen in previous sections translates to the context of category
theory straightforwardly. Let $\CC$ be a locally small category and $A, S \in \Ob(\CC)$. We say that
\emph{$A$ has finite big Ramsey degree in $S$} if there is a $t \in \NN$ such that for every $k \in \NN$ and
every coloring $\chi : \hom_\CC(A, S) \to k$ there is a $w \in \hom_\CC(S, S)$ such that
$|\chi(w \circ \hom_\CC(A, S))| \le t$. The least such $t$ is referred to as the \emph{big Ramsey degree of $A$ in $S$}
and we write $T_\CC(A, S) = t$. If $A$ does not have a finite big Ramsey degree in $S$ we write $T_\CC(A, S) = \infty$.

Our main proof strategy is based on transporting the Ramsey property from one context to another.
In the setting of finite Ramsey theory similar notions have been proposed in~\cite{masul-preadj} and~\cite{solecki-finite-ramsey-category-theory}.

\begin{DEF}\label{mnmrf.def.piggyback}
  Let $\AAA$ and $\BB$ be locally small categories. For $A, X \in \Ob(\AAA)$ and $B, Y \in \Ob(\BB)$
  we write $(A, X)_\AAA \prec (B, Y)_\BB$ to denote that there is an $M \subseteq \hom(B, Y)$ and a set-function
  $\phi : M \to \hom(A, X)$ such that for every $h \in \hom(Y, Y)$ one can find a $g \in \hom(X, X)$ satisfying:
  $$
    g \circ \hom(A, X) \subseteq \phi(M \cap h \circ \hom(B, Y)).
  $$
  (Note that $\circ$ takes precedence over $\cap$, so $M \cap h \circ \hom(B, Y)$ should be understood as $M \cap (h \circ \hom(B, Y))$.)
\end{DEF}

\begin{LEM}\label{mnmrf.lem.piggy=>brd}
    Let $\AAA$ and $\BB$ be locally small categories, $A, X \in \Ob(\CC)$ and $B, Y \in \Ob(\DD)$.
    If $(A, X)_\AAA \prec (B, Y)_\BB$ then $T_\AAA(A, X) \le T_\BB(B, Y)$.
\end{LEM}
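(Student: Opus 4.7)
\medskip

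The plan is to chase the definitions. Set $t = T_\BB(B, Y)$; we want to show that for every $k \ge 1$ and every $\chi : \hom_\AAA(A, X) \to k$ there is some $g \in \hom_\AAA(X, X)$ with $|\chi(g \cdot \hom_\AAA(A, X))| \le t$. The idea is to pull $\chi$ back to a coloring of $\hom_\BB(B, Y)$ via the witness $\phi : M \to \hom_\AAA(A, X)$ from the piggyback relation, apply the Ramsey property in $\BB$, and then translate back using the defining property of $\prec$.

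Concretely, fix the witnesses $M \subseteq \hom_\BB(B, Y)$ and $\phi : M \to \hom_\AAA(A, X)$. Given $\chi : \hom_\AAA(A, X) \to k$, define a coloring $\chi^* : \hom_\BB(B, Y) \to k$ by setting $\chi^*(f) = \chi(\phi(f))$ whenever $f \in M$, and by assigning any fixed color (say color $1$) to morphisms in $\hom_\BB(B, Y) \setminus M$. Since $T_\BB(B, Y) = t$, there exists $h \in \hom_\BB(Y, Y)$ with $|\chi^*(h \cdot \hom_\BB(B, Y))| \le t$.

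Now invoke the definition of $(A, X)_\AAA \prec (B, Y)_\BB$ applied to this $h$: we obtain $g \in \hom_\AAA(X, X)$ such that
\[
  g \cdot \hom_\AAA(A, X) \subseteq \phi\bigl(M \cap h \cdot \hom_\BB(B, Y)\bigr).
\]
Applying $\chi$ to both sides and using that $\chi(\phi(f)) = \chi^*(f)$ for all $f \in M$, we get
\[
  \chi\bigl(g \cdot \hom_\AAA(A, X)\bigr) \subseteq \chi^*\bigl(M \cap h \cdot \hom_\BB(B, Y)\bigr) \subseteq \chi^*\bigl(h \cdot \hom_\BB(B, Y)\bigr),
\]
whose cardinality is at most $t$. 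Since $k$ and $\chi$ were arbitrary, this yields $T_\AAA(A, X) \le t = T_\BB(B, Y)$.

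There is no serious obstacle: the whole argument is a direct unpacking of Definition~\ref{mnmrf.def.piggyback}, with the only mild care being the choice of a default color on $\hom_\BB(B, Y) \setminus M$ (which does not affect the final estimate because the piggyback conclusion lands inside $\phi(M \cap h \cdot \hom_\BB(B, Y))$, and on this set $\chi^*$ agrees with $\chi \circ \phi$).
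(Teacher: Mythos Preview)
Your proof is correct and essentially identical to the paper's: both pull $\chi$ back along $\phi$ to a coloring of $\hom_\BB(B,Y)$ (with an arbitrary default color off $M$), apply the Ramsey bound in $\BB$ to obtain $h$, and then use the defining inclusion $g\cdot\hom_\AAA(A,X)\subseteq\phi(M\cap h\cdot\hom_\BB(B,Y))$ to transfer the bound back. The only trivial omission is that you do not explicitly dispose of the case $T_\BB(B,Y)=\infty$, where the inequality holds vacuously.
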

\begin{proof}
    Let $T_\BB(B, Y) = t \in \NN$. Since $(A, X)_\AAA \prec (B, Y)_\BB$, there is an $M \subseteq \hom(B, Y)$ and a set-function
    $\phi : M \to \hom(A, X)$ as in Definition~\ref{mnmrf.def.piggyback}.
    Take any coloring $\chi : \hom(A, X) \to k$.
    Define $\gamma : \hom(B, Y) \to k$ as follows: $\gamma(f) = \chi(\phi(f))$ if $f \in M$
    and $\gamma(f) = 0$ otherwise. Since $T_\BB(B, Y) = t$ there is an $h \in \hom(Y, Y)$ such that
    $
      |\gamma(h \circ \hom(B, Y))| \le t
    $.
    By the choice of $M$ and $\phi$ for this $h$ there is a $g \in \hom(X, X)$
    satisfying
    $
      g \circ \hom(A, X) \subseteq \phi(M \cap h \circ \hom(B, Y))
    $.
    Now,
    $$
      \chi(g \circ \hom(A, X))
      \subseteq \chi(\phi(M \cap h \circ \hom(B, Y)))
      \subseteq \gamma(h \circ \hom(B, Y)),
    $$
    whence $|\chi(g \circ \hom(A, X))| \le |\gamma(h \circ \hom(B, Y))| \le t$.
\end{proof}

\begin{LEM}\label{mnmrf.lem.rev-piggyback}
  Let $\AAA$ and $\BB$ be locally small categories, $A, X \in \Ob(\AAA)$ and $B, Y \in \Ob(\BB)$.
  Suppose that there is an injective function $\psi : \hom(A, X) \to \hom(B, Y)$ such that
  for every $h \in \hom(Y, Y)$ one can find a $g \in \hom(X, X)$ satisfying
  $
    \psi(g \circ \hom(A, X)) \subseteq h \circ \hom(B, Y)
  $.
  Then $(A, X)_\AAA \prec (B, Y)_\BB$.
\end{LEM}
\begin{proof}
  Let $M = \im(\psi) \subseteq \hom(B, Y)$ and note that the codomain restriction $\psi_M : \hom(A, X) \to M$
  defined by $\psi_M(f) = f$ is a bijection. Let $\phi = \psi_M^{-1} : M \to \hom(A, X)$.
  Then it is easy to see that $M$ and $\phi$ satisfy the requirements of the Definition~\ref{mnmrf.def.piggyback}.
\end{proof}

\begin{LEM}\label{mnmrf.lem.piggyback-transitivity}
  Let $\AAA$, $\BB$ and $\CC$ be locally small categories, and let $A, X \in \Ob(\AAA)$, $B, Y \in \Ob(\BB)$ and
  $C, Z \in \Ob(\CC)$ be arbitrary objects.
  If $(A, X)_\AAA \prec (B, Y)_\BB$ and $(B, Y)_\BB \prec (C, Z)_\CC$ then $(A, X)_\AAA \prec (C, Z)_\CC$.
\end{LEM}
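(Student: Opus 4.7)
The plan is to produce witnesses $M^* \subseteq \hom(C, Z)$ and $\phi^* : M^* \to \hom(A, X)$ for $(A, X)_\AAA \prec (C, Z)_\CC$ by composing the data supplied by the two hypotheses. Let $M \subseteq \hom(B, Y)$, $\phi : M \to \hom(A, X)$ witness $(A, X)_\AAA \prec (B, Y)_\BB$, and let $N \subseteq \hom(C, Z)$, $\psi : N \to \hom(B, Y)$ witness $(B, Y)_\BB \prec (C, Z)_\CC$. The natural choice is $M^* = N \cap \psi^{-1}(M)$ together with $\phi^*(f) = \phi(\psi(f))$ for $f \in M^*$; this is well-defined because $f \in M^*$ forces $\psi(f) \in M$.

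Given an arbitrary $h \in \hom(Z, Z)$, the verification proceeds by chaining the two ``for every morphism there exists a morphism'' clauses. First apply $(B, Y)_\BB \prec (C, Z)_\CC$ to $h$ to obtain a $g' \in \hom(Y, Y)$ with
$$
  g' \cdot \hom(B, Y) \subseteq \psi(N \cap h \cdot \hom(C, Z)).
$$
Then apply $(A, X)_\AAA \prec (B, Y)_\BB$ to this $g'$ to obtain a $g \in \hom(X, X)$ with
$$
  g \cdot \hom(A, X) \subseteq \phi(M \cap g' \cdot \hom(B, Y)).
$$
This $g$ is the candidate witness for the composite relation at the morphism $h$.

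It remains to show the containment
$$
  \phi(M \cap g' \cdot \hom(B, Y)) \subseteq \phi^*(M^* \cap h \cdot \hom(C, Z)).
$$
This is a direct element chase: any $a$ in the left-hand side lies in $M$ and equals some $g' \cdot b$, hence by the first containment $a = \psi(c)$ for some $c \in N \cap h \cdot \hom(C, Z)$; since $\psi(c) = a \in M$ we get $c \in M^*$, and then $\phi^*(c) = \phi(\psi(c)) = \phi(a)$. Combining the two containments yields $g \cdot \hom(A, X) \subseteq \phi^*(M^* \cap h \cdot \hom(C, Z))$, as required.

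The proof is entirely bookkeeping once the right $M^*$ and $\phi^*$ are identified; there is no genuine obstacle, only the need to order the two applications correctly (first use the $\BB$-to-$\CC$ data on $h$, then feed the resulting $g'$ into the $\AAA$-to-$\BB$ data), and to keep track of how $\psi^{-1}(M)$ enters the definition of $M^*$ so that $\phi^*$ is well-defined.
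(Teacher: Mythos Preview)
Your proof is correct and follows essentially the same approach as the paper: you take $M^* = \psi^{-1}(M)$ (your extra intersection with $N$ is redundant since $\psi$ has domain $N$) and $\phi^* = \phi \circ \psi$, then chain the two hypotheses in the same order to produce the required $g$. The only cosmetic difference is that the paper writes the final verification as a chain of set inclusions rather than an element chase, but the content is identical.
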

\begin{proof}
  Since $(A, X)_\AAA \prec (B, Y)_\BB$ there is a set $M_1 \subseteq \hom(B, Y)$ and a function $\phi_1 : M_1 \to \hom(A, X)$ such that
  for every $g \in \hom(Y, Y)$ there is an $f \in \hom(X, X)$ satisfying
  $$
    f \circ \hom(A, X) \subseteq \phi_1(M_1 \cap g \circ \hom(B, Y)).
  $$
  Analogously, $(B, Y)_\BB \prec (C, Z)_\CC$ means that there is a set $M_2 \subseteq \hom(C, Z)$ and a function $\phi_2 : M_2 \to \hom(B, Y)$ such that
  for every $h \in \hom(Z, Z)$ there is a $g \in \hom(Y, Y)$ satisfying
  $$
    g \circ \hom(B, Y) \subseteq \phi_2(M_2 \cap h \circ \hom(C, Z)).
  $$
  To show that $(A, X)_\AAA \prec (C, Z)_\CC$ let $M = \phi_2^{-1}(M_1) \subseteq \hom(C, Z)$ and define $\phi : M \to \hom(A, X)$ by
  $\phi(f) = \phi_1(\phi_2(f))$. Take any $h \in \hom(Z, Z)$. Then there is a $g \in \hom(Y, Y)$ such that
  $g \circ \hom(B, Y) \subseteq \phi_2(M_2 \cap h \circ \hom(C, Z))$.
  For this $g$ there is an $f \in \hom(X, X)$ such that $f \circ \hom(A, X) \subseteq \phi_1(M_1 \cap g \circ \hom(B, Y))$. Now,
  \begin{align*}
    f \circ \hom(A, X)
    &\subseteq \phi_1(M_1 \cap g \circ \hom(B, Y)) \\
    &\subseteq \phi_1(M_1 \cap \phi_2(M_2 \cap h \circ \hom(C, Z))) \\
    &\subseteq \phi_1(\phi_2(\phi_2^{-1}(M_1) \cap h \circ \hom(C, Z)))\\
    &= \phi(M \cap h \circ \hom(C, Z)).
  \end{align*}
  This completes the proof.
\end{proof}

\begin{LEM}\label{mnmrf.lem.hom-equiv}
  Let $\AAA$ and $\BB$ be locally small categories, $A, X, X' \in \Ob(\AAA)$ and $B, Y \in \Ob(\BB)$. If $X$ and $X'$ are hom-equivalent and
  $(A, X)_\AAA \prec (B, Y)_\BB$  then $(A, X')_\AAA \prec (B, Y)_\BB$.
\end{LEM}
\begin{proof}
  Since $(A, X)_\AAA \prec (B, Y)_\BB$ there is an $M \subseteq \hom(B, Y)$ and a function $\phi : M \to \hom(A, X)$
  as in Definition~\ref{mnmrf.def.piggyback}. Fix a pair of morphisms $p \in \hom(X, X')$ and $q \in \hom(X', X)$.
  Define $\phi' : M \to \hom(A, X')$ by $\phi'(f) = p \circ \phi(f)$ and take any $h \in \hom(Y, Y)$.
  Then there is a $g \in \hom(X, X)$ satisfying
  $$
    g \circ \hom(A, X) \subseteq \phi(M \cap h \circ \hom(B, Y)).
  $$
  Since $q \circ \hom(A, X') \subseteq \hom(A, X)$ we get
  $$
    p \circ g \circ q \circ \hom(A, X') \subseteq p \circ g \circ \hom(A, X) \subseteq p \circ \phi(M \cap h \circ \hom(B, Y)).
  $$
  Therefore, for $g' = p \circ g \circ q \in \hom(X', X')$ we have that
  $$
    g' \circ \hom(A, X') \subseteq \phi'(M \cap h \circ \hom(B, Y)).
  $$
  This completes the proof.
\end{proof}

\begin{LEM}\label{mnmrf.lem.prod-pigyback}
  Let $\AAA_1$, \ldots, $\AAA_n$, $\BB_1$, \ldots, $\BB_n$ be locally small categories,
  and let $A_i, X_i \in \Ob(\AAA_i)$, $B_i, Y_i \in \Ob(\BB)$, $1 \le i \le n$, be arbitrary.
  If $(A_i, X_i)_{\AAA_i} \prec (B_i, Y_i)_{\BB_i}$ for all $1 \le i \le n$, then
  $$
    \big((A_1, \ldots, A_n),  (X_1, \ldots, X_n)\big)_{\AAA_1 \times \ldots \times \AAA_n} \prec
    \big((B_1, \ldots, B_n),  (Y_1, \ldots, Y_n)\big)_{\BB_1 \times \ldots \times \BB_n}.
  $$
\end{LEM}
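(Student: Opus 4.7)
The plan is to witness the product piggyback relation by taking componentwise products of the witnesses provided by the hypotheses. Concretely, for each $1 \le i \le n$ the assumption $(A_i, X_i)_{\AAA_i} \prec (B_i, Y_i)_{\BB_i}$ supplies a set $M_i \subseteq \hom_{\BB_i}(B_i, Y_i)$ together with a function $\phi_i : M_i \to \hom_{\AAA_i}(A_i, X_i)$ satisfying the condition in Definition~\ref{mnmrf.def.piggyback}. I will let
$$
  M = M_1 \times \cdots \times M_n
  \quad\text{and}\quad
  \phi(f_1, \ldots, f_n) = (\phi_1(f_1), \ldots, \phi_n(f_n)).
$$
Since homsets in a product category are simply products of homsets, $M$ lives inside $\hom((B_1, \ldots, B_n), (Y_1, \ldots, Y_n))$ and $\phi$ lands in $\hom((A_1, \ldots, A_n), (X_1, \ldots, X_n))$, as required.

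Next I will verify the defining condition. Fix any $h = (h_1, \ldots, h_n) \in \hom((Y_1, \ldots, Y_n), (Y_1, \ldots, Y_n))$, i.e.\ any tuple of endomorphisms $h_i \in \hom_{\BB_i}(Y_i, Y_i)$. For each $i$ the hypothesis produces a $g_i \in \hom_{\AAA_i}(X_i, X_i)$ with
$$
  g_i \cdot \hom_{\AAA_i}(A_i, X_i) \subseteq \phi_i\bigl(M_i \cap h_i \cdot \hom_{\BB_i}(B_i, Y_i)\bigr).
$$
Setting $g = (g_1, \ldots, g_n)$, the tuple $g$ is an endomorphism of $(X_1, \ldots, X_n)$ in the product category.

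The only computation to carry out is that the product, intersection and image operations interact with direct products of sets in the expected way. Composition in a product category is coordinatewise, so
$
  g \cdot \hom((A_1, \ldots, A_n), (X_1, \ldots, X_n))
  = (g_1 \cdot \hom(A_1, X_1)) \times \cdots \times (g_n \cdot \hom(A_n, X_n)),
$
and the same identity holds for $h \cdot \hom((B_1, \ldots, B_n), (Y_1, \ldots, Y_n))$. Combining this with the trivial identities $(U_1 \times \cdots \times U_n) \cap (V_1 \times \cdots \times V_n) = (U_1 \cap V_1) \times \cdots \times (U_n \cap V_n)$ and $\phi(S_1 \times \cdots \times S_n) = \phi_1(S_1) \times \cdots \times \phi_n(S_n)$ yields
$$
  g \cdot \hom((A_1,\ldots), (X_1,\ldots))
  \subseteq \phi\bigl(M \cap h \cdot \hom((B_1,\ldots), (Y_1,\ldots))\bigr),
$$
which is exactly the required inclusion.

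I do not anticipate any genuine obstacle: the entire argument is a formal, diagram-free manipulation once one observes that Definition~\ref{mnmrf.def.piggyback} is built out of operations (composition, intersection, image) that commute with finite direct products. The only thing to be careful about is a clean statement of the coordinatewise identities above, so as to make the single-line conclusion transparent.
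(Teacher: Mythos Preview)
Your proposal is correct and follows exactly the paper's approach: take $M = M_1 \times \cdots \times M_n$ and $\phi = \phi_1 \times \cdots \times \phi_n$, then verify Definition~\ref{mnmrf.def.piggyback} componentwise. The paper in fact only states that these $M^*$ and $\phi^*$ ``satisfy the requirements'' and omits the verification you spell out, so your write-up is a strictly more detailed version of the same argument.
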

\begin{proof}
    Since $(A_i, X_i)_{\AAA_i} \prec (B_i, Y_i)_{\BB_i}$, $1 \le i \le n$,
    there is an $M_i \subseteq \hom(B_i, Y_i)$ and a function $\phi_i : M_i \to \hom(A_i, X_i)$
    as in Definition~\ref{mnmrf.def.piggyback}. Then
    $$
      M^* = M_1 \times \ldots \times M_n \subseteq \hom\big((B_1, \ldots, B_n),  (Y_1, \ldots, Y_n)\big)
    $$
    and
    $$
      \phi^* : M^* \to \hom\big((A_1, \ldots, A_n),  (X_1, \ldots, X_n)\big)
    $$
    given by $\phi^*(f_1, \ldots, f_n) = (\phi_1(f_1), \ldots, \phi_n(f_n))$ satisfy the requirements of Definition~\ref{mnmrf.def.piggyback}.
\end{proof}

\subsection{The non-scattered case}

Let $\ChEmb$ be the category of all chains and embeddings between them (so that $\hom_{\ChEmb}(A, B) = \Emb(A, B)$).
Although the main result of this section, Theorem~\ref{mnmrf.thm.CHAIN-PROD}, is a statement about chains as first-order structures,
the computations are much easier in an auxiliary category $\PQ$ defined as follows.
The objects of $\PQ$ are $\NN \cup \{\QQ\}$, that is, all finite chains 1, 2, 3, \ldots, $n$, \ldots\ together with the chain $\QQ$;
the morphisms in $\PQ$ are defined as follows:
\begin{itemize}
  \item $\hom_\PQ(n, n) = \{\id_n\}$, and $\hom_\PQ(m, n) = \0$ for $n \ne m$ ($m, n \in \NN$),
  \item $\hom_\PQ(\QQ, \QQ) = \Emb(\QQ, \QQ)$ and $\hom_\PQ(\QQ, n) = \0$ ($n \in \NN$),
  \item $\hom_\PQ(n, \QQ)$ contains all partial maps $n \rightharpoonup \QQ$ including the empty map $\0$,
        that is,
        all set-functions of the form $f : A  \to \QQ$ where $A \subseteq n$ ($n \in \NN$);
\end{itemize}
and the composition is the usual composition of (partial) functions. In particular,
if $f : n \rightharpoonup \QQ$ is a partial function with $\dom(f) = A \subseteq n$
and $h : \QQ \hookrightarrow \QQ$ is an embedding, then $h \circ f : n \rightharpoonup \QQ$ is a partial function
with $A$ as its domain defined so that $(h \circ f)(x) = h(f(x))$ for all $x \in A$.

\begin{LEM}\label{mnmrf.lem.PQ-fbrd}
  In the category $\PQ$ every finite chain has finite big Ramsey degree in $\QQ$, that is, $T_\PQ(n, \QQ) < \infty$ for all $n \in \NN$.
\end{LEM}
\begin{proof}
  Fix an $n \in \NN$. An \emph{$n$-type} is either the empty tuple $\0$, or a tuple $\tau = (A_0, A_1, \ldots, A_{m-1})$ such that
  $\0 \ne A = A_0 \cup A_1 \cup \ldots \cup A_{m-1} \subseteq n$ and $\{A_0, A_1, \ldots, A_{m-1}\}$ is a partition of~$A$.
  We say that a partial map $f : n \rightharpoonup \QQ$ \emph{is of type $\tau$} and write $\tp(f) = \tau$ if
  either $f = \0$ and $\tau = \0$, or
  \begin{itemize}
    \item $\dom(f) = A \ne \0$,
    \item $(\forall j < m)(\forall x, y \in A_j) f(x) = f(y)$, and
    \item $(\forall i < j < m)(\forall x \in A_i)(\forall y \in A_j) f(x) < f(y)$.
  \end{itemize}
  We say that $m$ is the length of $\tau$ and write $m = |\tau|$ with $|\0| = 0$. Let
  $$
    \hom_\tau(n, \QQ) = \{f \in \hom_\PQ(n, \QQ) : \tp(f) = \tau\}.
  $$

  \medskip

  Claim. For every coloring $\chi : \hom_\tau(n, \QQ) \to k$ there is an embedding $w : \QQ \hookrightarrow \QQ$ such that
  $|\chi(w \circ \hom_\tau(n, \QQ))| \le T(m, \QQ)$ where the big Ramsey degree is computed in $\ChEmb$.
  By convention we take $T(0, \QQ) = 1$.

  Proof. The statement trivially holds for $\tau = \0$. Assume, therefore, that $\tau \ne \0$.
  There is a bijective correspondence $\Phi : \Emb(m, \QQ) \to \hom_\tau(n, \QQ)$ which assigns to each
  $g : m \hookrightarrow \QQ$ a partial map $f = \Phi(g) : n \rightharpoonup \QQ$ such that $\dom(f) = A$ and
  for every $j < m$ and every $a \in A_j$ we have that $f(a) = g(j)$. Now, define $\gamma : \Emb(m, \QQ) \to k$
  by $\gamma(g) = \chi(\Phi(g))$. Then there is an embedding $w : \QQ \hookrightarrow \QQ$ such that
  $|\gamma(w \circ \Emb(m, \QQ))| \le T(m, \QQ)$. Therefore,
  $|\chi(\Phi(w \circ \Emb(m, \QQ)))| \le T(m, \QQ)$. To finish the proof of the claim it suffices to note that $\Phi(w \circ g) = w \circ \Phi(g)$
  for every $g \in \Emb(m, \QQ)$, and that $\Phi(\Emb(m, \QQ)) = \hom_\tau(n, \QQ)$.

  \bigskip

  Take any coloring $\chi : \hom_\PQ(n, \QQ) \to k$. Let us enumerate all $n$-types of all lengths as $\tau_1$, \ldots, $\tau_s$. Note that
  $\hom_\PQ(n, \QQ) = \bigcup_{j=1}^s \hom_{\tau_j}(n, \QQ)$ and that this is a disjoint union.
  We shall now inductively construct a sequence of colorings $\chi_1$, \ldots, $\chi_s$ and a sequence of embeddings $w_1, \ldots, w_s \in \Emb(\QQ, \QQ)$.
  To start the induction define $\chi_1 : \hom_{\tau_1}(n, \QQ) \to k$ by $\chi_1(f) = \chi(f)$. Then by the Claim there is a $w_1 \in \Emb(\QQ, \QQ)$
  such that
  $$
    |\chi_1(w_1 \circ \hom_{\tau_1}(n, \QQ))| \le T_1,
  $$
  where $T_1 = T_{\ChEmb}(|\tau_1|, \QQ)$. Assume, now, that $\chi_1$, \ldots, $\chi_{j-1}$ and
  embeddings $w_1, \ldots, w_{j-1} \in \Emb(\QQ, \QQ)$ have been constructed. Define  $\chi_j : \hom_{\tau_j}(n, \QQ) \to k$ by
  $$
    \chi_j(f) = \chi(w_1 \circ \ldots \circ w_{j-1} \circ f).
  $$
  By the Claim there is a $w_j \in \Emb(\QQ, \QQ)$ such that
  $$
    |\chi_j(w_j \circ \hom_{\tau_j}(n, \QQ))| \le T_j,
  $$
  where $T_j = T_{\ChEmb}(|\tau_j|, \QQ)$.
  Let $w = w_1 \circ w_2 \circ \ldots \circ w_s$. Then
  \begin{gather*}\textstyle
    |\chi(w \circ \hom_\PQ(n, \QQ))| = \sum_{j=1}^s |\chi(w \circ \hom_{\tau_j}(n, \QQ))| = \\
    \textstyle = \sum_{j=1}^s |\chi(w_1 \circ \ldots \circ w_{j-1} \circ w_j \circ w_{j+1} \circ \ldots \circ w_s \circ \hom_{\tau_j}(n, \QQ))| \le \\
    \textstyle \le \sum_{j=1}^s |\chi_j(w_j \circ \hom_{\tau_j}(n, \QQ))| \le \sum_{j=1}^s T_j,
  \end{gather*}
  having in mind the fact that $w_{j+1} \circ \ldots \circ w_s \circ \hom_{\tau_j}(n, \QQ) \subseteq \hom_{\tau_j}(n, \QQ)$,
  the definition of~$\chi_j$ and the choice of $w_j$. This completes the proof.
\end{proof}

For partial functions $f_1 : n_1 \rightharpoonup \QQ$, \ldots, $f_s : n_s \rightharpoonup \QQ$ let
$$
  f_1 \oplus \ldots \oplus f_s : n_1 + \ldots + n_s \rightharpoonup \QQ
$$
denote the partial function constructed as follows: $f_j(i)$ is defined if and only if $(f_1 \oplus \ldots \oplus f_s)(n_1 + \ldots + n_{j-1} + i)$ is defined, and then
$f_j(i) = (f_1 \oplus \ldots \oplus f_s)(n_1 + \ldots + n_{j-1} + i)$. In other words, $f_1 \oplus \ldots \oplus f_s$ is
constructed by ``concatenating'' the partial functions $f_1$, \ldots, $f_s$.

\begin{LEM}\label{mnmrf.lem.PQprod-to-PQ}
    Let $s \in \NN$ be a positive integer and let $n_1, \ldots, n_s \in \NN$ be finite chains. Then
    $
      \big((n_1, \ldots, n_s), (\QQ, \ldots, \QQ)\big)_{\PowPQ{s}} \prec (n_1 + \ldots + n_s, \QQ)_\PQ
    $,
    where $\PowPQ{s} = \PQ \times \ldots \times \PQ$ ($s$ times).
\end{LEM}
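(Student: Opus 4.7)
The plan is to apply Lemma~\ref{mnmrf.lem.rev-piggyback} with the injective function $\psi$ taken to be the ``concatenation'' operation already introduced: set
$$
  \psi(f_1, \ldots, f_s) = f_1 \oplus \ldots \oplus f_s.
$$
This $\psi$ is injective because the partition of $n_1 + \ldots + n_s$ into $s$ consecutive blocks of sizes $n_1, \ldots, n_s$ allows one to recover each $f_j$ uniquely as the appropriate restriction of $\psi(f_1, \ldots, f_s)$.

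To verify the hypothesis of Lemma~\ref{mnmrf.lem.rev-piggyback}, given any $h \in \hom_\PQ(\QQ, \QQ) = \Emb(\QQ, \QQ)$, I would simply take $g = (h, h, \ldots, h) \in \hom_{\PowPQ{s}}((\QQ, \ldots, \QQ), (\QQ, \ldots, \QQ))$. The crucial algebraic observation is that postcomposition with $h$ commutes with $\oplus$, since $\oplus$ merely reindexes domains while $h$ acts on values; that is,
$$
  h \circ (f_1 \oplus \ldots \oplus f_s) = (h \circ f_1) \oplus \ldots \oplus (h \circ f_s).
$$
Applied to any $(f_1, \ldots, f_s) \in \hom((n_1, \ldots, n_s), (\QQ, \ldots, \QQ))$ this gives
$$
  \psi(h \circ f_1, \ldots, h \circ f_s) = h \circ \psi(f_1, \ldots, f_s) \in h \cdot \hom_\PQ(n_1 + \ldots + n_s, \QQ),
$$
which is exactly the required inclusion
$$
  \psi\bigl(g \cdot \hom((n_1, \ldots, n_s), (\QQ, \ldots, \QQ))\bigr) \subseteq h \cdot \hom_\PQ(n_1 + \ldots + n_s, \QQ).
$$

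There is no genuine obstacle: the whole argument reduces to the naturality of $\oplus$ with respect to postcomposition, plus the trivial observation that choosing $g_j = h$ uniformly pushes the product structure through $\psi$. In effect, the lemma is bookkeeping that transports a coloring of $\hom_\PQ(n_1 + \ldots + n_s, \QQ)$ into a coloring of the product homset, and the concatenation map $\psi$ is precisely the vehicle for that transport.
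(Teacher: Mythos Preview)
Your argument is correct and matches the paper's proof essentially line for line: define $\psi$ as concatenation, take $g=(h,\ldots,h)$, use $h\circ(f_1\oplus\cdots\oplus f_s)=(h\circ f_1)\oplus\cdots\oplus(h\circ f_s)$, and invoke Lemma~\ref{mnmrf.lem.rev-piggyback}. The only addition is that you explicitly justify injectivity of $\psi$, which the paper leaves implicit.
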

\begin{proof}
  Define
  $$
    \psi : \hom_{\PowPQ{s}}\big((n_1, \ldots, n_s), (\QQ, \ldots, \QQ)\big) \to \hom_\PQ(n_1 + \ldots + n_s, \QQ)
  $$
  by $\psi(f_1, \ldots, f_s) = f_1 \oplus \ldots \oplus f_s$.
  Take any $h \in \Emb(\QQ, \QQ)$ and let $g = (\underbrace{h, \ldots, h}_{\text{$s$ times}})$.
  Then it is easy to check that
  $$
    \psi\Big(g \circ \hom_{\PowPQ{s}} \big((n_1, \ldots, n_s), (\QQ, \ldots, \QQ)\big)\Big) \subseteq
    h \circ \hom_\PQ (n_1 + \ldots + n_s, \QQ)
  $$
  because
  \begin{multline*}
    \psi\big((h, \ldots, h) \circ (f_1, \ldots, f_s)\big) = \psi\big((h \circ f_1, \ldots, h \circ f_s)\big) =\\
    = (h \circ f_1) \oplus \ldots \oplus (h \circ f_s) = h \circ (f_1 \oplus \ldots \oplus f_s).
  \end{multline*}
  The claim now follows from Lemma~\ref{mnmrf.lem.rev-piggyback}.
\end{proof}

\begin{PROP}\label{mnmrf.prop.non-scattered-chain-lemma}
    Let $\calC$ be a non-scattered countable chain and let $n \in \NN$ be a finite chain.
    Then $(n, \calC)_{\ChEmb} \prec (n, \QQ)_{\PQ}$.
\end{PROP}
\begin{proof}
    Note first that $(n, \QQ)_{\ChEmb} \prec (n, \QQ)_{\ChEmb}$ trivially.
    Since $\calC$ is a non-scattered countable chain it is bi-embeddable with $\QQ$.
    In other words, $\calC$ and $\QQ$ are hom-equivalent in $\ChEmb$,
    so $(n, \calC)_{\ChEmb} \prec (n, \QQ)_{\ChEmb}$ by Lemma~\ref{mnmrf.lem.hom-equiv}.
    It is easy to see that $(n, \QQ)_\ChEmb \prec (n, \QQ)_\PQ$,
    so the statement follows by transitivity of~$\prec$ (Lemma~\ref{mnmrf.lem.piggyback-transitivity}).
\end{proof}

\subsection{The scattered case}

Let us recall some notions and adapt some facts from~\cite{masul-fbrd-chains}.
A \emph{rooted tree} is a triple $\tau = (T, \Boxed\le, v_0)$ where $(T, \Boxed\le)$
is a partially ordered set, $v_0 \in T$ is the \emph{root} of $T$ and for every $x \in T$ the interval
$[v_0, x]_T = \{a \in T : v_0 \le a \le x\}$ is nonempty and well-ordered.
Maximal chains in~$(T, \Boxed\le)$ are called the \emph{branches} of $\tau$.

Since we are interested in trees coding countable scattered chains of finite Hausdorff rank,
the following notion will be convenient: we shall say that a rooted tree is \emph{small} if all of
its branches are finite and every vertex in the tree has at most countably many immediate successors.
Note that every tree that codes a countable scattered chain of finite Hausdorff rank is small,
so \emph{all the trees in this subsection are small.}

A vertex $x \in T$ is a \emph{leaf of $\tau$} if it has no immediate successors.
Every branch in a small rooted tree starts at the root of the tree and ends in a leaf.

Let $\{b_\xi : \xi < \alpha\}$ be a set containing some branches of a small rooted tree $\tau = (T, \Boxed\le, v_0)$. The \emph{subtree of $\tau$
induced by branches $b_\xi$, $\xi < \alpha$,} is the subtree of $\tau$ induced by the set of vertices $\UNION_{\xi < \alpha} b_\xi$.

A small rooted tree $\tau = (T, \Boxed\le, v_0)$ is \emph{ordered} if we are given a linear order on each of the successor sets of the tree.
Let $\tau = (T, \Boxed\le, v_0)$ be a small ordered small rooted tree.
Since every branch in $\tau$ is a finite set of vertices leading to the root of the tree,
every vertex $x \in T$ has a finite \emph{height} $\high_\tau(x)$, and
every pair of vertices $x, y \in T$ has a unique \emph{meet} $x \wedge y$ in $\tau$.
The linear orders of the successor sets in $\tau$ uniquely determine a linear
ordering $\preccurlyeq_{\mathit{BFS}}$ on the vertices of $T$ which we refer to as the \emph{BFS-ordering of $\tau$}:
just traverse the tree using the breadth-first-search strategy. This means that we start
with the root $v_0$, then list the immediate successors of $v_0$ in the prescribed order, and so on.
More precisely, we let $x \preccurlyeq_{\mathit{BFS}} y$ if:
\begin{itemize}
    \item $x \le y$; or
    \item $x$ and $y$ are incomparable with respect to $\le$, but $\high_\tau(x) < \high_\tau(y)$; or
    \item $x$ and $y$ are incomparable with respect to $\le$, $\high_\tau(x) = \high_\tau(y)$, and for
          $z = x \wedge y$ there exist immediate successors $x'$ and $y'$ of $z$ such that $x' \le x$, $y' \le y$
          and $x'$ is smaller than $y'$ with respect to the linear order imposed on the immediate successors of $z$.
\end{itemize}

A \emph{small labeled ordered rooted tree} is a small ordered rooted tree whose vertices are labeled by the elements of some set $L_v$,
and edges are labeled by the elements of some set $L_e$.
For a small labeled ordered rooted tree $\tau$ by $L_v(\tau)$ we denote the set of vertex labels that appear in~$\tau$,
and by $L_e(\tau)$ we denote the set of edge labels that appear in~$\tau$.

Let us now define a family of sets $\frakA_n$, $n \in \omega$, of small labeled ordered rooted trees and the scattered chains they encode.
Let $L_v = \{0, 1, \Boxed+, \omega, \omega^*\}$ be the set of vertex labels and let
$L_e = \omega \union \{\iota_n : n \in \omega\}$ be the set of edge labels.
Let $\frakA_0 = \{ \bullet0, \bullet1 \}$ be the set whose elements are
single-vertex trees $\bullet0$ (a vertex labeled by $0$) and $\bullet1$ (a vertex labeled by $1$);
the chains these trees encode are $\|\bullet0\| = \0$ -- the empty chain, and $\|\bullet1\| = 1$ -- the trivial one-element chain.

\begin{figure}
  \centering
\begin{pgfpicture}
  \pgfsetxvec{\pgfpoint{\acadpgfunit}{0pt}}
  \pgfsetyvec{\pgfpoint{0pt}{\acadpgfunit}}
  \pgfsetlinewidth{\acadpgflinewidth}
  \pgftransformshift{\pgfpointxy{800.0}{125.0}}

  \begin{pgfscope}
    \pgfpathmoveto{\pgfpointxy{75.0}{250.0}}
    \pgfpathlineto{\pgfpointxy{150.0}{50.0}}
    \pgfusepath{stroke}
  \end{pgfscope}
  \begin{pgfscope}
    \pgfpathmoveto{\pgfpointxy{150.0}{50.0}}
    \pgfpathlineto{\pgfpointxy{0.0}{50.0}}
    \pgfusepath{stroke}
  \end{pgfscope}
  \begin{pgfscope}
    \pgfpathmoveto{\pgfpointxy{0.0}{50.0}}
    \pgfpathlineto{\pgfpointxy{75.0}{250.0}}
    \pgfusepath{stroke}
  \end{pgfscope}
  \begin{pgfscope}
    \pgfpathmoveto{\pgfpointxy{275.0}{250.0}}
    \pgfpathlineto{\pgfpointxy{200.0}{50.0}}
    \pgfusepath{stroke}
  \end{pgfscope}
  \begin{pgfscope}
    \pgfpathmoveto{\pgfpointxy{200.0}{50.0}}
    \pgfpathlineto{\pgfpointxy{350.0}{50.0}}
    \pgfusepath{stroke}
  \end{pgfscope}
  \begin{pgfscope}
    \pgfpathmoveto{\pgfpointxy{350.0}{50.0}}
    \pgfpathlineto{\pgfpointxy{275.0}{250.0}}
    \pgfusepath{stroke}
  \end{pgfscope}
  \begin{pgfscope}
    \pgfpathmoveto{\pgfpointxy{175.0}{350.0}}
    \pgfpathlineto{\pgfpointxy{75.0}{250.0}}
    \pgfusepath{stroke}
  \end{pgfscope}
  \begin{pgfscope}
    \pgfpathmoveto{\pgfpointxy{175.0}{350.0}}
    \pgfpathlineto{\pgfpointxy{275.0}{250.0}}
    \pgfusepath{stroke}
  \end{pgfscope}
  \begin{pgfscope}
    \pgfpathmoveto{\pgfpointxy{103.106}{364.461}}
    \pgfpatharcaxes{209.62}{330.38}{\pgfpointxy{82.7012}{0.0}}{\pgfpointxy{0.0}{82.7012}}
    \pgfusepath{stroke}
  \end{pgfscope}
  \begin{pgfscope}
    \pgfpathmoveto{\pgfpointxy{228.88}{350.178}}
    \pgfpatharcaxes{286.44}{330.38}{\pgfpointxy{30.7241}{0.0}}{\pgfpointxy{0.0}{30.7241}}
    \pgfusepath{stroke}
  \end{pgfscope}
  \begin{pgfscope}
    \pgfpathmoveto{\pgfpointxy{246.894}{364.461}}
    \pgfpatharcaxes{150.38}{194.319}{\pgfpointxy{30.7241}{0.0}}{\pgfpointxy{0.0}{30.7241}}
    \pgfusepath{stroke}
  \end{pgfscope}
  \begin{pgfscope}
    \pgfpathmoveto{\pgfpointxy{650.0}{250.0}}
    \pgfpathlineto{\pgfpointxy{725.0}{50.0}}
    \pgfusepath{stroke}
  \end{pgfscope}
  \begin{pgfscope}
    \pgfpathmoveto{\pgfpointxy{725.0}{50.0}}
    \pgfpathlineto{\pgfpointxy{575.0}{50.0}}
    \pgfusepath{stroke}
  \end{pgfscope}
  \begin{pgfscope}
    \pgfpathmoveto{\pgfpointxy{575.0}{50.0}}
    \pgfpathlineto{\pgfpointxy{650.0}{250.0}}
    \pgfusepath{stroke}
  \end{pgfscope}
  \begin{pgfscope}
    \pgfpathmoveto{\pgfpointxy{850.0}{250.0}}
    \pgfpathlineto{\pgfpointxy{775.0}{50.0}}
    \pgfusepath{stroke}
  \end{pgfscope}
  \begin{pgfscope}
    \pgfpathmoveto{\pgfpointxy{775.0}{50.0}}
    \pgfpathlineto{\pgfpointxy{925.0}{50.0}}
    \pgfusepath{stroke}
  \end{pgfscope}
  \begin{pgfscope}
    \pgfpathmoveto{\pgfpointxy{925.0}{50.0}}
    \pgfpathlineto{\pgfpointxy{850.0}{250.0}}
    \pgfusepath{stroke}
  \end{pgfscope}
  \begin{pgfscope}
    \pgfpathmoveto{\pgfpointxy{750.0}{350.0}}
    \pgfpathlineto{\pgfpointxy{650.0}{250.0}}
    \pgfusepath{stroke}
  \end{pgfscope}
  \begin{pgfscope}
    \pgfpathmoveto{\pgfpointxy{750.0}{350.0}}
    \pgfpathlineto{\pgfpointxy{850.0}{250.0}}
    \pgfusepath{stroke}
  \end{pgfscope}
  \begin{pgfscope}
    \pgfpathmoveto{\pgfpointxy{678.106}{364.461}}
    \pgfpatharcaxes{209.62}{330.38}{\pgfpointxy{82.7012}{0.0}}{\pgfpointxy{0.0}{82.7012}}
    \pgfusepath{stroke}
  \end{pgfscope}
  \begin{pgfscope}
    \pgfpathmoveto{\pgfpointxy{678.106}{364.461}}
    \pgfpatharcaxes{209.62}{253.56}{\pgfpointxy{30.7241}{0.0}}{\pgfpointxy{0.0}{30.7241}}
    \pgfusepath{stroke}
  \end{pgfscope}
  \begin{pgfscope}
    \pgfpathmoveto{\pgfpointxy{681.166}{341.677}}
    \pgfpatharcaxes{-14.3193}{29.6201}{\pgfpointxy{30.7241}{0.0}}{\pgfpointxy{0.0}{30.7241}}
    \pgfusepath{stroke}
  \end{pgfscope}
  \begin{pgfscope}
    \pgfpathmoveto{\pgfpointxy{-400.0}{250.0}}
    \pgfpathlineto{\pgfpointxy{-325.0}{50.0}}
    \pgfusepath{stroke}
  \end{pgfscope}
  \begin{pgfscope}
    \pgfpathmoveto{\pgfpointxy{-325.0}{50.0}}
    \pgfpathlineto{\pgfpointxy{-475.0}{50.0}}
    \pgfusepath{stroke}
  \end{pgfscope}
  \begin{pgfscope}
    \pgfpathmoveto{\pgfpointxy{-475.0}{50.0}}
    \pgfpathlineto{\pgfpointxy{-400.0}{250.0}}
    \pgfusepath{stroke}
  \end{pgfscope}
  \begin{pgfscope}
    \pgfpathmoveto{\pgfpointxy{-200.0}{250.0}}
    \pgfpathlineto{\pgfpointxy{-275.0}{50.0}}
    \pgfusepath{stroke}
  \end{pgfscope}
  \begin{pgfscope}
    \pgfpathmoveto{\pgfpointxy{-275.0}{50.0}}
    \pgfpathlineto{\pgfpointxy{-125.0}{50.0}}
    \pgfusepath{stroke}
  \end{pgfscope}
  \begin{pgfscope}
    \pgfpathmoveto{\pgfpointxy{-125.0}{50.0}}
    \pgfpathlineto{\pgfpointxy{-200.0}{250.0}}
    \pgfusepath{stroke}
  \end{pgfscope}
  \begin{pgfscope}
    \pgfpathmoveto{\pgfpointxy{-300.0}{350.0}}
    \pgfpathlineto{\pgfpointxy{-400.0}{250.0}}
    \pgfusepath{stroke}
  \end{pgfscope}
  \begin{pgfscope}
    \pgfpathmoveto{\pgfpointxy{-300.0}{350.0}}
    \pgfpathlineto{\pgfpointxy{-200.0}{250.0}}
    \pgfusepath{stroke}
  \end{pgfscope}
  \begin{pgfscope}
    \pgfsetfillcolor{black}
    \pgfpathellipse{\pgfpointxy{175.0}{350.0}}{\pgfpointxy{8.0}{0.0}}{\pgfpointxy{0.0}{8.0}}
    \pgfusepath{fill,stroke}
  \end{pgfscope}
  \begin{pgfscope}
    \pgfsetfillcolor{black}
    \pgfpathellipse{\pgfpointxy{750.0}{350.0}}{\pgfpointxy{8.0}{0.0}}{\pgfpointxy{0.0}{8.0}}
    \pgfusepath{fill,stroke}
  \end{pgfscope}
  \begin{pgfscope}
    \pgfsetfillcolor{black}
    \pgfpathellipse{\pgfpointxy{-300.0}{350.0}}{\pgfpointxy{8.0}{0.0}}{\pgfpointxy{0.0}{8.0}}
    \pgfusepath{fill,stroke}
  \end{pgfscope}
  \begin{pgfscope}
    \pgfsetfillcolor{black}
    \pgfpathellipse{\pgfpointxy{-400.0}{250.0}}{\pgfpointxy{8.0}{0.0}}{\pgfpointxy{0.0}{8.0}}
    \pgfusepath{fill,stroke}
  \end{pgfscope}
  \begin{pgfscope}
    \pgfsetfillcolor{black}
    \pgfpathellipse{\pgfpointxy{-200.0}{250.0}}{\pgfpointxy{8.0}{0.0}}{\pgfpointxy{0.0}{8.0}}
    \pgfusepath{fill,stroke}
  \end{pgfscope}
  \pgftext[bottom,at={\pgfpointxy{175.0}{370.0}}]{$\omega$}
  \pgftext[bottom,at={\pgfpointxy{75.0}{62.0}}]{$\tau_0$}
  \pgftext[bottom,at={\pgfpointxy{275.0}{62.0}}]{$\tau_k$}
  \pgftext[right,at={\pgfpointxy{38.0}{250.0}}]{$\tau =$}
  \pgftext[at={\pgfpointxy{175.0}{275.0}}]{$\cdots$}
  \pgftext[at={\pgfpointxy{325.0}{275.0}}]{$\cdots$}
  \pgftext[bottom,right,at={\pgfpointxy{92.0}{297.461}}]{$0$}
  \pgftext[bottom,left,at={\pgfpointxy{258.0}{297.461}}]{$k$}
  \pgftext[bottom,at={\pgfpointxy{750.0}{370.0}}]{$\omega^*$}
  \pgftext[bottom,at={\pgfpointxy{650.0}{62.0}}]{$\tau_k$}
  \pgftext[bottom,at={\pgfpointxy{850.0}{62.0}}]{$\tau_0$}
  \pgftext[right,at={\pgfpointxy{538.0}{250.0}}]{$\tau^* =$}
  \pgftext[at={\pgfpointxy{750.0}{275.0}}]{$\cdots$}
  \pgftext[at={\pgfpointxy{600.0}{275.0}}]{$\cdots$}
  \pgftext[bottom,right,at={\pgfpointxy{667.0}{297.461}}]{$k$}
  \pgftext[bottom,left,at={\pgfpointxy{833.0}{297.461}}]{$0$}
  \pgftext[bottom,at={\pgfpointxy{-300.0}{370.0}}]{$+$}
  \pgftext[bottom,right,at={\pgfpointxy{-358.0}{308.0}}]{$\iota_0$}
  \pgftext[bottom,left,at={\pgfpointxy{-242.0}{308.0}}]{$\iota_{n}$}
  \pgftext[bottom,at={\pgfpointxy{-400.0}{62.0}}]{$\tau_0$}
  \pgftext[bottom,at={\pgfpointxy{-200.0}{62.0}}]{$\tau_{n}$}
  \pgftext[right,at={\pgfpointxy{-462.0}{250.0}}]{$\sigma =$}
  \pgftext[at={\pgfpointxy{-300.0}{250.0}}]{$\cdots$}
  \pgftext[top,at={\pgfpointxy{-300.0}{13.0}}]{$(a)$}
  \pgftext[top,at={\pgfpointxy{175.0}{13.0}}]{$(b)$}
  \pgftext[top,at={\pgfpointxy{750.0}{13.0}}]{$(c)$}
\end{pgfpicture}
  \caption{The three summations on trees}
  \label{mnmrf.fig.trees}
\end{figure}

Assume that $\frakA_i$ have been defined for all $i < m$ and let us define three operations on trees as follows:
\begin{itemize}
\item
  For $n \in \NN$ and $\tau_0, \ldots, \tau_n \in \bigcup_{i < m} \frakA_i$ let
  $\sigma$ be the tree whose root is labeled by $+$,
  edges going out of the root are labeled by $\iota_0, \ldots, \iota_{n}$ and are ordered that way, and each edge $\iota_k$
  leads to a subtree isomorphic to $\tau_k$, $0 \le k \le n$, Fig.~\ref{mnmrf.fig.trees}~$(a)$.
  Let us denote this tree as $\sigma = \tau_0 + \ldots + \tau_n$;
  the chain it encodes is $\|\sigma\| = \|\tau_0\| + \ldots + \|\tau_{n}\|$.
\item
  For $\tau_k \in \bigcup_{i < m} \frakA_i$, $k \in \omega$,
  let $\tau$, resp.\ $\tau^*$, be a tree whose root is labeled by $\omega$, resp.\ $\omega^*$,
  edges going out of the root are labeled by and ordered as $\omega$, resp.\ $\omega^*$,
  and each edge labeled by $k \in \omega$ leads to a subtree isomorphic to $\tau_k$, $k \in \omega$,
  Fig.~\ref{mnmrf.fig.trees}~$(b)$ and~$(c)$.
  Let us denote the tree as $\tau = \sum_{k \in \omega} \tau_k$, resp.\ $\tau^* = \sum_{k \in \omega^*} \tau_k$;
  the chain it encodes is $\|\tau\| = \sum_{k \in \omega} \|\tau_k\|$, resp.\ $\|\tau^*\| = \sum_{k \in \omega^*} \|\tau_k\|$.
\end{itemize}
Then put
\begin{align*}
  \frakA_m =
  &\textstyle \Big\{\sum_{k \in \omega} \tau_k : \tau_k \in \bigcup_{i < m} \frakA_i \text{ and } \|\tau_0\| \hookrightarrow \|\tau_1\| \hookrightarrow \ldots\Big\}\\
  &\textstyle \cup \Big\{\sum_{k \in \omega^*} \tau_k : \tau_k \in \bigcup_{i < m} \frakA_i \text{ and } \|\tau_0\| \hookrightarrow \|\tau_1\| \hookrightarrow \ldots\Big\}\\
  &\textstyle \cup
    \begin{array}[t]{@{}l@{\,}l}
      \Big\{\sum_{k \in \omega} (\tau_{k0} + \ldots +  \tau_{kn}) : & n \in \NN, \tau_{kj} \in \bigcup_{i < m} \frakA_i \text{ and } \\
                                                                    & \|\tau_{0j}\| \hookrightarrow \|\tau_{1j}\| \hookrightarrow \ldots \text{ for all } j\Big\}
    \end{array}\\
  &\textstyle \cup
    \begin{array}[t]{@{}l@{\,}l}
      \Big\{\sum_{k \in \omega^*} (\tau_{k0} + \ldots +  \tau_{kn}) : & n \in \NN, \tau_{kj} \in \bigcup_{i < m} \frakA_i \text{ and } \\
                                                                      & \|\tau_{0j}\| \hookrightarrow \|\tau_{1j}\| \hookrightarrow \ldots \text{ for all } j\Big\}.
    \end{array}
\end{align*}
and let 
$
  \frakA = \bigcup_{m \in \omega} \frakA_m
$.
Furthermore, let $\frakS$ be the set of trees defined as ``finite sums of trees from~$\frakA$'':
$$
  \frakS = \frakA \union \{\tau_0 + \ldots + \tau_n : n \in \NN \text{ and } \tau_0, \ldots, \tau_n \in \frakA\}.
$$
Then building on Laver's results from~\cite{laver-fraisse-conj} it is easy to show:

\begin{LEM} \cite[Lemma 5.4]{masul-fbrd-chains} 
    A chain $\calS$ is a countable scattered chain of finite Hausdorff rank if and only if there is a tree $\sigma \in \frakS$ such that
    $\calS$ and $\|\sigma\|$ are bi-embeddable.
\end{LEM}

\begin{LEM}\label{mnmrf.lem.no-leaves=0}
  If $\sigma \in \frakS$ encodes a nonempty chain then there is a
  $\sigma' \in \frakS$ such that $\|\sigma\|$ and $\|\sigma'\|$ are bi-embeddable and no leaf in $\sigma'$ is labeled by~0.
\end{LEM}
\begin{proof}
    Clearly, it suffices to show that for every $m \in \omega$ and every $\tau \in \frakA_m$, either $\|\tau\| = \0$, or there is a
    $\tau' \in \bigcup_{i \le m} \frakA_i$ such that $\|\tau\|$ and $\|\tau'\|$ are bi-embeddable
    and no leaf in $\tau'$ is labeled by~0. The proof is by induction on $m$. The claim trivially holds for $\tau \in \frakA_0$.
    Assume that the claim is true for all $i < m$ and take any $\tau \in \frakA_m$.

    \medskip

    Case 1: $\tau = \sum_{k \in \omega} \tau_k$ where $\tau_k \in \bigcup_{i < m} \frakA_i$ and $\|\tau_0\| \hookrightarrow \|\tau_1\| \hookrightarrow \ldots$:
    
    It $\|\tau_k\| = \0$ for all $k$ then $\|\tau\| = \0$. Assume, therefore, that there is a $k$ such that $\|\tau_k\| \ne \0$
    and let $k_0$ be the least such $k$. Since $\|\tau_{k_0}\| \hookrightarrow \|\tau_{k_0+1}\| \hookrightarrow \ldots$
    we know that $\|\tau_k\| \ne \0$ for all $k \ge k_0$. By the induction hypothesis, for each $k \ge k_0$
    there is a $\tau'_k \in \bigcup_{i < m} \frakA_i$ such that $\|\tau_k\|$ and $\|\tau'_k\|$ are bi-embeddable
    and no leaf in $\tau'_k$ is labeled by~0. Then put $\tau' = \sum_{k \ge k_0} \tau'_k$.

    \medskip
    
    Case 2: $\tau = \sum_{k \in \omega^*} \tau_k$ where $\tau_k \in \bigcup_{i < m} \frakA_i$ and $\|\tau_0\| \hookrightarrow \|\tau_1\| \hookrightarrow \ldots$:

    Analogous to Case 1.

    \medskip

    Case 3: $\tau = \sum_{k \in \omega} (\tau_{k0} + \ldots +  \tau_{kn})$ where 
    $n \in \NN$, $\tau_{kj} \in \bigcup_{i < m} \frakA_i$ for all $k$ and $j$, and 
    $\|\tau_{0j}\| \hookrightarrow \|\tau_{1j}\| \hookrightarrow \ldots $ for all $j$:

    If $\|\tau_{kj}\| = \0$ for all $k$ and $j$ then $\|\tau\| = \0$. Assume, therefore, that $\|\tau_{kj}\| \ne \0$ for some
    $k$ and $j$. Without loss of generality we may assume that there is an integer $n' \in \{0, 1, \ldots, n\}$ such that
    \begin{itemize}
      \item for each $j \in \{0, \ldots, n'\}$ there is a $k$ with $\|\tau_{kj}\| \ne \0$; and
      \item for each $j > n'$ and for each $k \ge 0$ we have $\|\tau_{kj}\| = \0$.
    \end{itemize}
    If $n' = 0$ then $\|\tau_{k0} + \ldots +  \tau_{kn}\| = \|\tau_{k0}\|$ for all $k$, reducing the case to Case~1.
    Assume, therefore, that $n' \in \NN$. Choose $k_0 \in \omega$ with the property that
    $\|\tau_{k_0j}\| \ne \0$ for all $0 \le j \le n'$. Since $\|\tau_{0j}\| \hookrightarrow \|\tau_{1j}\| \hookrightarrow \ldots $ for all $j$,
    it follows that $\|\tau_{kj}\| \ne \0$ for all $0 \le j \le n'$ and $k \ge k_0$. By the induction hypothesis
    for each $0 \le j \le n'$ and $k \ge k_0$ there is a $\tau'_{kj} \in \bigcup_{i < m} \frakA_i$ such that
    $\|\tau_{kj}\|$ and $\|\tau'_{kj}\|$ are bi-embeddable and no leaf in $\tau'_{kj}$ is labeled by~0.
    Then put $\tau' = \sum_{k \ge k_0} (\tau'_{k0} + \ldots +  \tau'_{kn'})$.
    
    \medskip

    Case 4: $\tau = \sum_{k \in \omega^*} (\tau_{k0} + \ldots +  \tau_{kn})$ where 
    $n \in \NN$, $\tau_{kj} \in \bigcup_{i < m} \frakA_i$ for all $k$ and $j$, and 
    $\|\tau_{0j}\| \hookrightarrow \|\tau_{1j}\| \hookrightarrow \ldots $ for all $j$:

    Analogous to Case 3.
\end{proof}

Take any $\sigma \in \frakS$ and assume that no leaves in $\sigma$ are labeled by~0.
Each branch in $\sigma$ can be represented as a string of symbols from
$$
  \Lambda = \{\iota_0, \iota_1, \ldots \} \cup \{(\omega0), (\omega1), \ldots \} \cup \{(\omega^*0), (\omega^*1), \ldots \}
$$
by recording every label we encounter while traversing the branch from the root, see Fig.~\ref{mnmrf.fig.branches}.
To save space we shall skip labels $+$ preceding the $\iota_j$'s and labels $1$ that are mandatory labels of leaves.
Let $\Br(\sigma)$ denote the set of thus generated strings of elements of $\Lambda$.
Let $<_\Lambda$ denote the lexicographic ordering of strings of symbols from $\Lambda$
generated by the following ordering of~$\Lambda$:
$$
  \iota_0 < \iota_1 < \ldots < (\omega0) < (\omega1) < \ldots < \ldots < (\omega^*2) < (\omega^*1) < (\omega^*0).
$$

\begin{figure}
  \centering
  \def\.{\scriptsize}
  \input branches.pgf
  \caption{Encoding branches in a small labeled tree}
  \label{mnmrf.fig.branches}
\end{figure}

\begin{LEM}\label{mnmrf.lem.new-1}
  Take any $\sigma \in \frakS$ such that no leaves in $\sigma$ are labeled by~0.
  Then $\|\sigma\|$ is isomorphic to $\big(\Br(\sigma), \Boxed{<_\Lambda}\big)$.
\end{LEM}
\begin{proof}
  Clearly, the elements of $\|\sigma\|$ correspond to the elements of $\Br(\sigma)$, and
  the ordering of the elements of $\|\sigma\|$ corresponds to the ordering $<_\Lambda$ of $\Br(\sigma)$.
\end{proof}

Let $\tau$ be a small labeled ordered rooted tree whose vertices are labeled by elements of $L_v$
and edges are labeled by elements of $L_e$, and let $U \subseteq L_e$.
By $\restr \tau U$ we denote the subtree of $\tau$ induced by all of its branches whose edge labels belong to~$U$.

Let $V \subseteq \omega$ be an infinite subset of $\omega$, let $U = V \union \{\iota_n : n \in \omega\}$ and let
$\tau \in \frakS$ be arbitrary. The particular structure of $U$ ensures that the infinite sums in $\tau$ are restricted
so that $\sum_{k \in \omega}$ becomes $\sum_{k \in V}$, and similarly for $\sum_{k \in \omega^*}$.
Thus, we define $\|\restr \tau U\|$ as follows:
\begin{itemize}
\item
  if $\tau = \tau_0 + \ldots + \tau_n$ then $\|\restr\tau U\| = \|\restr{\tau_0}{U}\| + \ldots + \|\restr{\tau_n}{U}\|$;
\item
  if $\tau = \sum_{k \in \omega} \tau_k$ then $\|\restr\tau U\| = \sum_{k \in V} \|\restr{\tau_k}{U}\|$, and
  analogously in case $\tau = \sum_{k \in \omega^*} \tau_k$; and
\item
  if $\tau = \sum_{k \in \omega} (\tau_{k0} + \ldots +  \tau_{kn})$ then
  $\|\restr\tau U\| = \sum_{k \in V} (\|\restr{\tau_{k0}}{U}\| + \ldots +  \|\restr{\tau_{kn}}{U}\|)$,
  and analogously in case $\tau = \sum_{k \in \omega^*} (\tau_{k0} + \ldots +  \tau_{kn})$.
\end{itemize}

\begin{LEM}\label{mnmrf.lem.new-2}
  Let $V \subseteq \omega$ be an infinite subset of $\omega$, let $U = V \union \{\iota_n : n \in \omega\}$ and let
  $\tau \in \frakS$ be a tree such that none of its leaves is labeled by~0. Then
  $\big(\Br(\tau), \Boxed{<_\Lambda}\big)$ and $\big(\Br(\restr \tau U), \Boxed{<_\Lambda}\big)$
  are bi-embeddable.
\end{LEM}
\begin{proof}
  By Lemma~\ref{mnmrf.lem.new-1} we have that $\|\tau\|$ is isomorphic to $\big(\Br(\tau), \Boxed{<_\Lambda}\big)$
  and $\|\restr\tau U\|$ is isomorphic to $\big(\Br(\tau), \Boxed{<_\Lambda}\big)$.
  It is obvious that $\|\restr\tau U\| \hookrightarrow \|\tau\|$, so $\big(\Br(\restr \tau U), \Boxed{<_\Lambda}\big)$
  embeds into $\big(\Br(\tau), \Boxed{<_\Lambda}\big)$.

  For the other direction, enumerate the elements of $V$ as $V = \{v_0 < v_1 < \ldots \}$. Define
  $$
    g_U : \Lambda \to \Lambda
  $$
  so that $g_U(\iota_n) = \iota_n$, $n \in \omega$, while for $i \in \omega$ we put $g_U\big((\omega i)\big) = (\omega v_i)$ and 
  $g_U\big((\omega^* i)\big) = (\omega^* v_i)$. Clearly, $g_U$ expands to strings of elements of $\Lambda$
  in the obvious way:
  $$
    g_U(\lambda_1 \ldots \lambda_k) = g_U(\lambda_1) \ldots g_U(\lambda_k),
  $$
  $\lambda_1, \ldots, \lambda_k \in \Lambda$.
  Then $g_U$ is clearly an embedding of $\big(\Br(\tau), \Boxed{<_\Lambda}\big)$ into $\big(\Br(\restr \tau U), \Boxed{<_\Lambda}\big)$.
\end{proof}

A tree $\sigma \in \frakS$ has \emph{bounded finite sums} \cite{masul-fbrd-chains} if there is an
integer $b \in \NN$ such that $L_e(\sigma) \subseteq \omega \union \{\iota_0, \ldots, \iota_b\}$.
In other words, $\sigma$ is a tree whose finite sums have at most $b + 1$ summands.
A countable scattered chain $\calS$ of finite Hausdorff rank has \emph{bounded finite sums} \cite{masul-fbrd-chains}
if there is a tree $\sigma \in \frakS$ with bounded finite sums such that $\calS$ and $\|\sigma\|$ are bi-embeddable.

\begin{LEM}\label{mnmrf.lem.bounded-fin-sums-no-0-leaves}
  For every scattered countable chain $\calS$ of finite Hausdorff rank there is a $\sigma \in \frakS$
  such that $\sigma$ has bounded finite sums, none of its leaves are labeled by~0 and $\|\sigma\|$ is bi-embeddable with~$\calS$.
\end{LEM}
\begin{proof}
  This follows from Lemma~\ref{mnmrf.lem.no-leaves=0} and
  the main argument used to prove \cite[Theorem 3.1]{masul-fbrd-finale}.
  Namely, the crucial insight in the proof of \cite[Theorem 3.1]{masul-fbrd-finale} is the following claim
  which is not explicit in the paper:
  every countable scattered chain of finite Hausdorff rank is bi-embeddable
  with a countable scattered chain of finite Hausdorff rank and with bounded finite sums.
  Lemma~\ref{mnmrf.lem.no-leaves=0} then ensures that we can find another such tree with no leaves
  labeled by~0.
\end{proof}

Take any tree $\sigma \in \frakS$.
Every embedding $f : n \hookrightarrow \Br(\sigma)$, $n \in \NN$, corresponds to a subtree of $\sigma$
induced by the branches $\{f(i) : i < n\}$. Let us denote this subtree of $\sigma$ by $\tree{f}_\sigma$.
Assume, now, that $\tree{f}_\sigma$ has $p$ vertices. If we replace the vertex set of
$\tree{f}_\sigma$ by $\{0, 1, \ldots, p - 1\}$ so that the usual ordering of the integers agrees with the BFS-ordering of the
new tree, and then erase only those edge labels that come from $\omega$ or $\omega^*$, the resulting partially
labeled ordered rooted tree on the set of vertices $\{0, 1, \ldots, p-1\}$
will be referred to as the \emph{type of $f$}~\cite{masul-fbrd-chains} and will be denoted by $\tp_\sigma(f)$.
Embeddings $g_U$ introduced in the proof of Lemma~\ref{mnmrf.lem.new-2}
above are of particular interest because they preserve the types:

\begin{LEM}\label{mnmrf.lem.gU-type-reserving}
  Let $V \subseteq \omega$ be an infinite subset of $\omega$, let $U = V \union \{\iota_n : n \in \omega\}$ and let
  $\sigma \in \frakS$ be a tree such that none of its leaves is labeled by~0. Let $g_U : \Br(\sigma) \hookrightarrow \Br(\restr \sigma U)$
  be the embedding introduced in the proof of Lemma~\ref{mnmrf.lem.new-2}. Then for any embedding $f : n \hookrightarrow \Br(\sigma)$ we have that
  $\tp_\sigma(f) = \tp_\sigma(g_U \circ f)$.
\end{LEM}
\begin{proof}
  Note that the embeddings $g_U$ change labels of the form $(\omega i)$ and $(\omega^* i)$, and preserve the other labels,
  while in order to obtain the type of an embedding we erase precisely those labels. Therefore, it must be the case that
  $\tp_\sigma(f) = \tp_\sigma(g_U \circ f)$.
\end{proof}

A finite labeled ordered rooted tree $\tau$ is an
\emph{$(n, \sigma)$-type} if $\tau = \tp_\sigma(f)$ for some embedding $f : n \hookrightarrow \Br(\sigma)$.
For an $(n, \sigma)$-type $\tau$ let
$$
  \Emb_\tau(n, \Br(\sigma)) = \{ f \in \Emb(n, \Br(\sigma)) : \tp_\sigma(f) = \tau \}.
$$
The following is a simple, but important observation:

\begin{LEM}\cite{masul-fbrd-chains}
  Given an $n \in \NN$ and a $\sigma \in \calS$ with bounded finite sums, there are only finitely many $(n, \sigma)$-types.
\end{LEM}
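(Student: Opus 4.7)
The plan is to bound each ingredient of an $(n, \sigma)$-type and then count directly. Recall that such a type is a finite labelled ordered rooted tree obtained as the induced subtree $\tree{f}_\sigma$ spanned by $n$ branches of $\sigma$, with its vertex set renamed by $\{0, 1, \ldots, p - 1\}$ in BFS order and all edge labels coming from $\omega$ erased. I therefore need to bound (i) the number $p$ of vertices in $\tree{f}_\sigma$, (ii) the alphabet of surviving vertex labels, and (iii) the alphabet of surviving edge labels.

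First I would show that every $\sigma \in \frakS$ has a uniform bound $d$ on the length of its branches. A straightforward induction on $m$ bounds the depth of any tree in $\frakA_m$ by an explicit function of $m$, so every $\sigma \in \frakA$ has finite depth, and passage to $\frakS$ (which adds at most one root level for an outermost finite sum) preserves finiteness of depth. Consequently, the subtree spanned by $f(0), \ldots, f(n-1)$ contains at most $n \cdot d$ vertices. For the label alphabets: vertex labels of $\sigma$ lie in the five-element set $L_v = \{0, 1, +, \omega, \omega^*\}$, and the BFS-relabelling merely replaces the underlying vertex set by integers while keeping the $L_v$-label, so an $(n, \sigma)$-type uses at most five vertex-label symbols. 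The hypothesis of bounded finite sums gives a $b \in \NN$ with $L_e(\sigma) \subseteq \omega \cup \{\iota_0, \ldots, \iota_b\}$; since by definition the $\omega$-labels are erased when forming a type, the surviving edge labels of any $(n, \sigma)$-type lie in the finite set $\{\iota_0, \ldots, \iota_b\}$ (together with a ``no label'' marker for the erased edges).

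To finish: the number of ordered rooted trees on at most $N := n \cdot d$ vertices is finite, and once the underlying ordered tree is fixed there are at most $|L_v|^N \cdot (b + 2)^{N-1}$ ways to decorate its vertices and edges using the above alphabets. Hence the set of $(n, \sigma)$-types is finite. I do not expect any genuine obstacle here --- this is essentially a bookkeeping lemma --- and the only place where the specific structure of $\frakS$ (rather than mere smallness of the rooted tree) really matters is the uniform depth bound in the second step.
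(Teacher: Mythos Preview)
Your argument is correct: the uniform depth bound follows by induction on the level $m$ in the construction of $\frakA_m$ (each step adds at most two levels), and together with the bounded-finite-sums hypothesis this makes every ingredient of an $(n,\sigma)$-type range over a finite set, so a direct count finishes the job. Note, however, that the present paper does not actually prove this lemma --- it is quoted from \cite{masul-fbrd-chains} without argument --- so there is no in-paper proof to compare against; your write-up supplies exactly the kind of bookkeeping argument one expects the cited reference to contain.
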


\begin{PROP}\label{mnmrf.prop.scattered-chain-lemma}
    Let $\calS$ be a scattered countable chain of finite Hausdorff rank and let $n \in \NN$ be a finite chain.
    There is a positive integer $m$ such that $(n, \calS)_{\ChEmb} \prec (m, \QQ)_\PQ$.
\end{PROP}
\begin{proof}
  Let $\calS$ be a scattered countable chain of finite Hausdorff rank and let $n \in \NN$ be a finite chain.
  Thanks to Lemmas~\ref{mnmrf.lem.bounded-fin-sums-no-0-leaves} and~\ref{mnmrf.lem.hom-equiv}
  without loss of generality we may assume that $\calS = \|\sigma\| \cong \Br(\sigma)$ where
  $\sigma \in \frakS$ has bounded finite sums and none of its vertices labeled by~0.
  Let $\tau_1$, \ldots, $\tau_p$ be an enumeration of all $(n, \sigma)$-types. Then
  $$\textstyle
    \Emb(n, \Br(\sigma)) = \bigcup_{j=1}^p \Emb_{\tau_j}(n, \Br(\sigma))
  $$
  and this is a disjoint union. For each $\tau_j$ we shall now present a convenient encoding of embeddings from
  $\Emb_{\tau_j}(n, \Br(\sigma))$ in $\PQ$. In order to do so let us fix a bijection $\xi : \omega \to \QQ$.
  Let $\0_k : k \rightharpoonup \QQ$ denote the empty partial map. 

  Take an $(n, \sigma)$-type $\tau_j$. Assume, first, that no vertex of $\tau_j$ is labeled either by $\omega$ or by $\omega^*$.
  Then $|\Emb_{\tau_j}(n, \Br(\sigma))| = 1$, so let $m_j = 1$, let $M_j = \hom_\PQ(m_j, Q) \setminus \{\0_{m_j}\}$ and let
  $\phi_j : M_j \to \Emb_{\tau_j}(n, \Br(\sigma))$ be the constant map.
  
  Assume, now, that at least one vertex of $\tau_j$ is labeled by $\omega$ or $\omega^*$.
  Let $\ell_{j1} < \ell_{j2} < \ldots < \ell_{js_j}$ be all the vertices of~$\tau_j$ labeled by $\omega$ or $\omega^*$ and let
  let $m_{ji}$ be the number of immediate successors of $\ell_{ji}$ in $\tau_j$, $1 \le i \le s_j$.

  Take any $f \in \Emb_{\tau_j}(n, \Br(\sigma))$ and let $(v_1, v_2, \ldots, v_{p})$ be the vertex
  set of $\tree{f}_{\Br(\sigma)}$ ordered by the BFS-order of $\tree{f}_{\Br(\sigma)}$.
  Since $\tp_\sigma(f) = {\tau_j}$, the only vertices
  in $\tree{f}_{\Br(\sigma)}$ labeled by $\omega$ or $\omega^*$ are $v_{\ell_1}$, $v_{\ell_2}$, \ldots, $v_{\ell_{s}}$.
  Let $L_{ji}(f) \subseteq \omega$ be the set of all the labels used to label the edges to the immediate successors of $v_{\ell_i}$
  in $\tree{f}_{\Br(\sigma)}$, $1 \le i \le s_j$. Clearly, $|L_{ji}(f)| = m_{ji}$ for all $1 \le i \le s_j$.
  We can represent subsets $L_{ji}(f)$ of $\omega$ as embeddings
  $E_{ji}(f) : m_{ji} \hookrightarrow \omega$ so that $\im(E_{ji}(f)) = L_{ji}(f)$, $1 \le i \le s_j$.
  By construction, each embedding $f \in \Emb_{\tau_j}(n, \Br(\sigma))$ is uniquely determined by the
  sequence $(E_{j1}(f), E_{j2}(f), \ldots, E_{js_j}(f))$. Therefore,
  $$
    \psi_j : \Emb_{\tau_j}(n, \Br(\sigma)) \to \hom_\PQ(m_{j1} + \ldots + m_{js_j}, \QQ)
  $$
  given by
  $$
    \psi_j(f) = \xi \circ (E_{j1}(f) \oplus E_{j2}(f) \oplus \ldots \oplus E_{js_j}(f))
  $$
  is injective. Let $m_j = m_{j1} + m_{j2} + \ldots + m_{js_j}$, let $M_j = \im(\psi_j) \subseteq \hom_\PQ(m_j, \QQ)$ and let
  $\phi_j : M_j \to \Emb_{\tau_j}(n, \Br(\sigma))$ be the inverse of the codomain restriction of $\psi_j$.
  Note that
  $$
    M_j = \im(\psi_j) = \xi \circ (\Emb(m_{j1}, \omega) \oplus \ldots \oplus \Emb(m_{js_j}, \omega)).
  $$

  So, for each $(n, \sigma)$-type $\tau_j$, $1 \le j \le p$, we have constructed a positive integer $m_j$,
  a set $M_j \subseteq \hom_\PQ(m_j, \QQ)$ and a surjective function
  $$
    \phi_j : M_j \to \Emb_{\tau_j}(n, \Br(\sigma)).
  $$
  Let $m = m_1 + m_2 + \ldots + m_p$ and let $M \subseteq \hom_\PQ(m, \QQ)$ be the following set:
  $$
    M = \bigcup_{j=1}^p \{\0_{m_1 + \ldots + m_{j-1}}  \oplus f \oplus \0_{m_{j+1} + \ldots + m_{p}} : f \in M_j\}.
  $$
  Define $\phi : M \to \Emb(n, \Br(\sigma))$ by
  $$
    \phi(\0_{m_1 + \ldots + m_{j - 1}} \oplus f \oplus \0_{m_{j + 1} + \ldots + m_p}) = \phi_j(f).
  $$

  Take any $h \in \Emb(\QQ, \QQ)$. Then $h_0 = \xi^{-1} \circ h \circ \xi : \omega \to \omega$ is an injective map (which is not necessarily an
  embedding). Let $V = \im(h_0)$ and $U = V \cup \{\iota_k : k \in \omega\}$. Then
  $$
    g_U \circ \Emb(n, \Br(\sigma)) \subseteq \phi(M \cap h \circ \hom_\PQ(m, \QQ)).
  $$
  To see why this is indeed the case, take any $f \in \Emb(n, \Br(\sigma))$ and let $\tau_j = \tp_\sigma(f)$.

  If no vertex of $\tau_j$ is labeled either by $\omega$ or by $\omega^*$ then
  $\Emb_{\tau_j}(n, \Br(\sigma)) = \{f\}$, $m_j = 1$ and $M_j = \hom_\PQ(m_j, Q) \setminus \{\0_{m_j}\}$.
  Take any $w \in \hom_\PQ(m_1, \QQ) \setminus \{\0_{m_j}\}$ and note that
  $$
    w' = \0_{m_1 + \ldots + m_{j - 1}} \oplus (h \circ w) \oplus \0_{m_{j + 1} + \ldots + m_p} \in M \cap h \circ \hom_\PQ(m, \QQ)
  $$
  and that
  $$
    \phi(w') = \phi_j(h \circ w) = f,
  $$
  because $\phi_j$ is the constant map $M_j \to \Emb_{\tau_j}(n, \Br(\sigma)) = \{f\}$. On the other hand, $g_U \circ f = f$ by the construction of $g_U$.
  
  Assume, now, that at least one vertex of $\tau_j$ is labeled by $\omega$ or $\omega^*$.
  Then there exist embeddings $e_i : m_{ji} \hookrightarrow \omega$, $1 \le i \le s_j$, such that
  $$
    \psi_j(g_U \circ f) = \xi \circ g_U \circ (e_1 \oplus \ldots \oplus e_{s_j}).
  $$
  Let
  $
    e_1 \oplus \ldots \oplus e_{s_j} = \begin{pmatrix}
        0 & 1 & \ldots & m_j - 1\\
        k_0 & k_1 & \ldots & k_{m_j - 1}
    \end{pmatrix}
  $.
  Then
  $$
    \psi_j(g_U \circ f) = \begin{pmatrix}
        0 & 1 & \ldots & m_j - 1\\
        \xi \circ g_U(k_0) & \xi \circ g_U(k_1) & \ldots & \xi \circ g_U(k_{m_j - 1})
    \end{pmatrix} \in M_j.
  $$
  On the other hand, by the construction of $g_U$ we know that $g_U(i) \in \im(h_0) = \im(\xi^{-1} \circ h \circ \xi)$ for all $i \in \omega$.
  Therefore, for every $i \in \omega$ there is a $q \in \omega$ such that $g_U(i) = \xi^{-1} \circ h \circ \xi(q)$. Hence,
  $$
    \psi_j(g_U \circ f) = \begin{pmatrix}
        0 & 1 & \ldots & m_j - 1\\
        h \circ \xi(q_0) & h \circ \xi(q_1) & \ldots & h \circ \xi(q_{m_j - 1})
    \end{pmatrix} \in h \circ \hom_\PQ(m_j, \QQ).
  $$
  This suffices to conclude that
  $$
    g_U \circ f \in \phi(M \cap h \circ \hom_\PQ(m, \QQ)),
  $$
  and thus to conclude the proof.
\end{proof}

\subsection{The product Ramsey theorem}

We are now ready to prove the main result of the section.

\begin{THM}\label{mnmrf.thm.CHAIN-PROD}
  Let $\calC_1$, \ldots, $\calC_r$ be countable chains each with finite big Ramsey spectrum.
  For every choice of finite chains $n_1$, \ldots, $n_r$ there is a positive integer $N$
  such that for every $k \ge 1$ and every coloring
  $$
    \gamma : \Emb(n_1, \calC_1) \times \ldots \times \Emb(n_r, \calC_r) \to k
  $$
  there are embeddings $w_i : \calC_i \hookrightarrow \calC_i$, $1 \le i \le r$, such that
  $$
    \big|\gamma\big((w_1 \circ \Emb(n_1, \calC_1)) \times \ldots \times (w_r \circ \Emb(n_r, \calC_r))\big)\big| \le N.
  $$
\end{THM}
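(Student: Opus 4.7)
My approach is induction on $r$. The base case $r = 1$ is immediate: the assumption that $\calC_1$ has finite big Ramsey spectrum gives $T(n_1, \calC_1) < \infty$, so we may take $N = T(n_1, \calC_1)$, with $w_1$ furnished by the defining property of the big Ramsey degree.

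For the inductive step, the first move is to fix a tuple $\mathbf{f} = (f_1, \ldots, f_{r-1})$ and consider the slice coloring $\chi_{\mathbf{f}}(f_r) = \gamma(\mathbf{f}, f_r)$ of $\Emb(n_r, \calC_r)$. Each slice can individually be tamed by an embedding $w_r^{\mathbf{f}} : \calC_r \hookrightarrow \calC_r$ reducing its range to $T(n_r, \calC_r)$ colors, but $w_r^{\mathbf{f}}$ depends on $\mathbf{f}$ and there are infinitely many such tuples. The crux of the argument is extracting a single $w_r$ that works uniformly in $\mathbf{f}$. This uniformity is precisely the scattered-chain lemma that I would package in Section~\ref{mnmrf.sec.proof-of-scattered-chain-lemma}; by Theorem~\ref{fbrd-finale.thm.MAIN}, each $\calC_i$ is either non-scattered (bi-embeddable with $\QQ$) or scattered of finite Hausdorff rank, and both regimes have enough tree-like internal structure for this uniform extraction to succeed.

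Assuming the uniform lemma supplies $w_r$ together with a canonical-type map $\tau : \Emb(n_r, \calC_r) \to \{1, \ldots, M\}$ with $M$ finite, such that the restriction of $\gamma$ to $\Emb(n_1, \calC_1) \times \ldots \times \Emb(n_{r-1}, \calC_{r-1}) \times (w_r \circ \Emb(n_r, \calC_r))$ factors through $\tau$ in the last coordinate, one is reduced to $M$ colorings of the $(r-1)$-fold product. I would then apply the inductive hypothesis to each of these colorings in turn, taking successive refining embeddings $w_1, \ldots, w_{r-1}$ and summing the resulting constants to obtain $N$.

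The main obstacle is plainly this uniform stabilisation in the $r$-th coordinate, which is why a whole section is devoted to it. The observation that product statements for big Ramsey degrees typically fail shows that such uniformity has no right to hold in abstract generality; it is only the rich structural theory of countable chains with finite big Ramsey spectrum — Devlin's theorem for $\QQ$ together with Hausdorff's recursive $\ZZ$-sum characterisation of scattered chains of finite Hausdorff rank — that makes the argument go through.
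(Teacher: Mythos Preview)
Your inductive scheme hinges on a ``uniform stabilisation'' lemma that is false as stated. Take $r=2$, $\calC_1=\calC_2=\QQ$, $n_1=n_2=1$, and $\gamma(q_1,q_2)=\mathrm{sign}(q_1-q_2)$. For your factorisation claim to hold there would have to be an embedding $w_2:\QQ\hookrightarrow\QQ$ and a finite-valued $\tau$ on $\Emb(1,\QQ)$ such that $\gamma(q_1,w_2(q_2))$ depends only on $q_1$ and $\tau(q_2)$. But if $\tau(q_2)=\tau(q_2')$ with $q_2\ne q_2'$, pick $q_1$ strictly between $w_2(q_2)$ and $w_2(q_2')$ to get opposite signs; hence $\tau$ would have to be injective, contradicting finite range. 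The same obstruction already appears for $\omega\times\omega$ with $\gamma(i,j)=$ ``the $j$-th binary digit of $i$''. The point is that one cannot canonicalise a single coordinate while the remaining coordinates still range over an infinite set; the restriction must hit all coordinates at once, and induction on $r$ does not arrange this. This is exactly the ``notorious misbehaviour'' of big Ramsey degrees under products alluded to in the introduction.

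The paper's proof is not inductive. It introduces an auxiliary category $\PQ$ whose morphisms $n\to\QQ$ are \emph{partial} maps, shows that each pair $(n_i,\calC_i)$ transfers as $(n_i,\calC_i)_{\ChEmb}\prec(m_i,\QQ)_{\PQ}$ (Lemmas~\ref{mnmrf.lem.non-scattered-chain-lemma} and~\ref{mnmrf.lem.scattered-chain-lemma}), then concatenates the partial maps via $(f_1,\ldots,f_r)\mapsto f_1\oplus\cdots\oplus f_r$ to land in a single homset $\hom_\PQ(m_1+\cdots+m_r,\QQ)$ (Lemma~\ref{mnmrf.lem.PQprod-to-PQ}). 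The key trick is that one embedding $h:\QQ\hookrightarrow\QQ$ acts diagonally as $(h,\ldots,h)$ on the product, so a single application of the big Ramsey property of $\PQ$ (Proposition~\ref{mnmrf.prop.PQ-fbrd}) controls all coordinates simultaneously. Section~\ref{mnmrf.sec.proof-of-scattered-chain-lemma} is thus an \emph{encoding} lemma (scattered chains into $\PQ$), not the uniform-stabilisation lemma your outline presupposes.
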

\begin{proof}
    Let $\calC_1$, \ldots, $\calC_r$ be countable chains each with finite big Ramsey spectrum, and let
    $n_1$, \ldots, $n_r$ be finite chains. We have to show that:
    $$
      T_{\PowChEmb{r}}\big((n_1, \ldots, n_r), (\calC_1, \ldots, \calC_r)\big) < \infty,
    $$
    where $\PowChEmb{r} = \ChEmb \times \ldots \times \ChEmb$ ($r$ times). Countable chains with finite
    big Ramsey spectra have been characterized in Theorem~\ref{fbrd-finale.thm.MAIN}:
    a countable chain $\calC$ has finite big Ramsey spectrum if and only if
    $\calC$ is non-scattered, or $\calC$ is a scattered chain of finite Hausdorff rank.

    If $\calC_i$ is non-scattered then for $m_i = n_i$ we have that
    $(n_i, \calC_i)_{\ChEmb} \prec (m_i, \QQ)_{\PQ}$ by Proposition~\ref{mnmrf.prop.non-scattered-chain-lemma}.
    If, however, $\calC_i$ is a scattered chain of finite Hausdorff rank then by Proposition~\ref{mnmrf.prop.scattered-chain-lemma}
    there is a positive integer $m_i \in \NN$ such that
    $(n_i, \calS)_{\ChEmb} \prec (m_i, \QQ)_{\PQ}$. Therefore, Lemma~\ref{mnmrf.lem.prod-pigyback} yields:
    $$
      \big((n_1, \ldots, n_r), (\calC_1, \ldots, \calC_r)\big)_{\PowChEmb{r}} \prec 
      \big((m_1, \ldots, m_r), (\QQ, \ldots, \QQ)\big)_{\PowPQ{r}},
    $$
    where $\PowPQ{r} = \PQ \times \ldots \times \PQ$ ($r$ times). By Lemma~\ref{mnmrf.lem.PQprod-to-PQ}
    there is a positive integer $p \in \NN$ such that
    $$
      \big((m_1, \ldots, m_r), (\QQ, \ldots, \QQ)\big)_{\PowPQ{r}} \prec (p, \QQ)_\PQ.
    $$
    Therefore,
    $$
      T_{\PowChEmb{r}}\big((n_1, \ldots, n_r), (\calC_1, \ldots, \calC_r)\big) \le T_\PQ(p, \QQ) < \infty
    $$
    by Lemma~\ref{mnmrf.lem.piggy=>brd} and Lemma~\ref{mnmrf.lem.PQ-fbrd}.
    This concludes the proof.
\end{proof}

The intricacies of relational structures admitting a finite monomorphic decomposition require
the following slight generalization:

\begin{COR}\label{mnmrf.cor.CHAIN-PROD}
  Let $\calC_1$, \ldots, $\calC_t$ be countable chains each with finite big Ramsey spectrum, and let
  $\calC_{t+1}$, \ldots, $\calC_r$ be singletons, $t \le r$. For every choice of non-negative integers
  $n_1$, \ldots, $n_r \ge 0$ where $n_{j} \le 1$ for $t+1 \le j \le r$ there is a positive integer $N$
  such that for every $k \ge 1$ and every coloring
  $$
    \gamma : \Emb(n_1, \calC_1) \times \ldots \times \Emb(n_r, \calC_r) \to k
  $$
  there are embeddings $w_i : \calC_i \hookrightarrow \calC_i$, $1 \le i \le r$, such that
  $$
    \big|\gamma\big((w_1 \circ \Emb(n_1, \calC_1)) \times \ldots \times (w_r \circ \Emb(n_r, \calC_r))\big)\big| \le N.
  $$  
\end{COR}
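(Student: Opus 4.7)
The plan is to derive the corollary from Theorem~\ref{mnmrf.thm.CHAIN-PROD} by collapsing away the coordinates on which the embedding set is trivial. The first step is to identify those coordinates: $\Emb(n_i, \calC_i)$ is a singleton precisely in two situations permitted by the hypotheses, namely $n_i = 0$ (the unique element being the empty map) and $t+1 \le i \le r$ with $n_i = 1$ (the unique element being the map into the single point of $\calC_i$). Let $I = \{i : 1 \le i \le t,\ n_i \ge 1\}$ be the set of ``non-trivial'' indices and, for each $i \notin I$, let $e_i$ denote the unique element of $\Emb(n_i, \calC_i)$.

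Given a coloring $\gamma : \prod_{i=1}^r \Emb(n_i, \calC_i) \to k$, the next step is to define a reduced coloring $\gamma' : \prod_{i \in I} \Emb(n_i, \calC_i) \to k$ by inserting $e_i$ at every trivial coordinate. Applying Theorem~\ref{mnmrf.thm.CHAIN-PROD} to the subfamily $(\calC_i)_{i \in I}$ together with the finite chains $(n_i)_{i \in I}$ yields an integer $N$ and embeddings $w_i : \calC_i \hookrightarrow \calC_i$, $i \in I$, for which $\gamma'$ takes at most $N$ values on $\prod_{i \in I}(w_i \circ \Emb(n_i, \calC_i))$. Completing the tuple by setting $w_i = \id_{\calC_i}$ for $i \notin I$ (which is forced when $i > t$, since $\calC_i$ is then a singleton), one has $w_i \circ \Emb(n_i, \calC_i) = \{e_i\}$ at each trivial coordinate, so the full product over $\{1, \ldots, r\}$ is in canonical bijection with the reduced product over $I$ in a way that identifies $\gamma$ with $\gamma'$. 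The bound $N$ therefore transfers verbatim to give the conclusion of the corollary.

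I do not expect a substantive obstacle here: the argument is essentially bookkeeping once Theorem~\ref{mnmrf.thm.CHAIN-PROD} is in hand. The only edge case worth flagging explicitly is $I = \emptyset$, where the whole left-hand product is itself a singleton and $N = 1$ trivially works with any choice of embeddings $w_i$ (forced to be the identity whenever $\calC_i$ is a singleton).
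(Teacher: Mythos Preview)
Your proposal is correct and follows essentially the same approach as the paper: both identify the coordinates where $\Emb(n_i,\calC_i)$ is a singleton (either because $n_i=0$ or because $\calC_i$ is a one-point chain with $n_i=1$), observe that these contribute trivially to the product, and reduce the statement to Theorem~\ref{mnmrf.thm.CHAIN-PROD} on the remaining coordinates. Your write-up is in fact more explicit than the paper's, which dispatches the corollary in a single sentence calling it ``Theorem~\ref{mnmrf.thm.CHAIN-PROD} sprinkled with trivialities.''
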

\begin{proof}
  It is easy to see that this is nothing but Theorem~\ref{mnmrf.thm.CHAIN-PROD} sprinkled with trivialities:
  if $n_i = 0$ then we take $\Emb(n_i, \calC_i) = \{\0\}$ and $w_i \circ \0 = \0$ for any $w_i : \calC_i \hookrightarrow \calC_i$;
  on the other hand, if $\calC_i$ is a singleton and $n_i = 1$ then $|\Emb(n_i, \calC_i)| = 1$ and the only embedding
  $\calC_i \hookrightarrow \calC_i$ is the identity.
\end{proof}

\section{Big Ramsey degrees for the generic \textcolor{blue}{2-di\-men\-si\-o\-nal} partial order}
\label{mnmrf.sec.brd-gen-poset}

In this section we \sout{provide an alternative proof of the result of Hubi\v cka from~\cite{hubicka-param-spaces}}
\textcolor{blue}{prove} that the generic \textcolor{blue}{2-dimensional}
partial order has big Ramsey degrees. \sout{Instead of Voigt's infinite $\star$-version of the
Graham-Rothschild's theorem (see \cite[Theorem~A]{promel-voigt-1985}), our} \textcolor{blue}{Our} proof relies on the tools developed in
Section~\ref{mnmrf.sec.prod-thm} and in particular on Theorem~\ref{mnmrf.thm.CHAIN-PROD}.

Let us recall some basic facts of \Fraisse's theory of relational structures~\cite{Fraisse-ThRel}.
Fix a finite relational language.
An \emph{age} of a countable relational structure $\calS$, denoted by $\Age(\calS)$, is the class of all finite structures that
$\calS$ embeds. We say that a structure $\calS'$ is \emph{younger} that $\calS$ if $\Age(\calS)$ contains $\Age(\calS')$.
A countable relational structure $\calS$ is \emph{ultrahomogeneous} if every isomorphism between finite substructures of $\calS$
extends to an automorphism of $\calS$. We shall say that a class $\KK$ of finite structures is a \emph{\Fraisse\ age}
if there exists a countable ultrahomogeneous relational structure $\calS$ such that $\KK = Age(\calS)$.
Given a \Fraisse\ age $\KK$ there is, up to isomorphism, precisely one countable ultrahomogeneous relational structure
$\calS$ with $\KK = \Age(\calS)$ and it is usually referred to as the \emph{\Fraisse\ limit of $\KK$}.
If $\calS$ is an ultrahomogeneous countable relational structure and if $\calS'$ is younger than $\calS$ then $\calS$ embeds $\calS'$.
For example, the class of all finite linear orders is a \Fraisse\ age and its \Fraisse\ limit is the order of the rationals~\cite{Fraisse1,Fraisse2}.
The order of the rationals embeds all finite and countably infinite chains.
The class of all finite partial orders is a also \Fraisse\ age and its \Fraisse\ limit is the \emph{generic partial order}~\cite{Schmerl}.
The generic partial order embeds all finite and countably infinite partial orders.

Our main goal in this section is to transport the big Ramsey degrees from the rationals $\QQ$ to the
generic \textcolor{blue}{2-dimensional} partial order via
an intermediary generic structure -- the generic permutation. From a traditional point of view
a permutation of a set $A$ is any bijection $f : A \to A$. If $A$ is finite, say $A = \{a_1, a_2, \ldots, a_n\}$,
then each permutation $f : A \to A$ can be represented as
$
    f = \left(\begin{smallmatrix}
      a_1 & a_2 & \ldots & a_n \\
      a_{i_1} & a_{i_2} & \ldots & a_{i_n}
    \end{smallmatrix}\right)
$.
So, in order to specify a permutation it suffices to specify two linear orders on $A$: the ``standard'' order
$a_1 < a_2 < \ldots < a_n$ on $A$, and the permuted order $a_{i_1} \sqsubset a_{i_2} \sqsubset \ldots \sqsubset a_{i_n}$.
In this paper we adopt P.~J.~Cameron's reinterpretation of permutations in model-theoretic terms~\cite{cameron-perm}
and say that a \emph{permutation} is a triple $(A, \Boxed<, \Boxed\sqsubset)$ where $<$ and $\sqsubset$ are linear orders on~$A$.
Cameron then shows in~\cite{cameron-perm} that the class of all finite permutations is a \Fraisse\ age and constructs the
generic permutation. It has the form $(\QQ, \Boxed{\prec_1}, \Boxed{\prec_2})$ and it is easy to see that
$(\QQ, \Boxed{\prec_1}) \cong (\QQ, \Boxed{\prec_2}) \cong \QQ$. Let $\Perm$ denote the category of all finite and countably
infinite permutations together with embeddings.

We start the proof by showing that the generic permutation has big Ramsey degrees.
We shall derive this fact from 
$$
  T_{\ChEmb \times \ChEmb}\big((\calA, \calB), (\QQ, \QQ)\big) < \infty,
$$
where $\calA$ and $\calB$ are finite chains and $\QQ$ is the chain of the rationals ordered in the usual way.
This is an obvious consequence of Theorem~\ref{mnmrf.thm.CHAIN-PROD}.
The main idea of the proof is to transfer the big Ramsey degrees
from the category $\ChEmb \times \ChEmb$ to the category $\Perm$ along
a functor $G : \Perm \to \ChEmb \times \ChEmb$. We then use the fact that the generic \textcolor{blue}{2-dimensional} partial order is
quantifier-free definable in the generic permutation to transport the property of having big Ramsey degrees
from the generic permutation to the generic \textcolor{blue}{2-dimensional} partial order.

\begin{figure}
  \centering
  \begin{tikzcd}
    {\bullet} \arrow[loop above] & {\bullet} \arrow[loop above] & {\bullet} \arrow[loop above] & \ldots & {\bullet} \arrow[loop above] \\
    {\bullet} \arrow[loop below] \arrow[u] \arrow[ur] & {\bullet} \arrow[loop below] \arrow[ur] \arrow[ul] & {\bullet} \arrow[loop below] \arrow[u] \arrow[ur] & \ldots & {\bullet} \arrow[loop below] \arrow[u] \arrow[ull]
  \end{tikzcd}
  \caption{A binary category}
  \label{mnmrf.fig.bin-cat}
\end{figure}

Consider a finite, bipartite digraph with loops 
where all the arrows go from one class of vertices into the other,
and the out-degree of all the vertices in the first class is~2 (modulo loops), see Fig.~\ref{mnmrf.fig.bin-cat}.
Such a diagraph can be thought of as a category where the loops represent the identity morphisms, and will be referred to as
a \emph{binary category}. Note that all the compositions in a binary category
are trivial since no nonidentity morphisms are composable.
An \emph{amalgamation problem} in a category $\CC$ is a functor $F : \Delta \to \CC$ where $\Delta$ is a binary category,
$F$ takes the top row of $\Delta$ to the same object, and takes the bottom of $\Delta$ to the same object,
see Fig.~\ref{nrt.fig.2}. If $F$ takes the bottom row of $\Delta$ to an object $A$ and the top
row to an object $B$ then the functor $F : \Delta \to \CC$ will be referred to as the \emph{$(A, B)$-diagram in $\CC$}.
An amalgamation problem $F : \Delta \to \CC$ \emph{has a solution in $\CC$} if $F$ has a \emph{compatible cocone in $\CC$}
in the following sense: there is a $C \in \Ob(\CC)$ and for each $\delta \in \Ob(\Delta)$ there is a morphism $f_\delta \in \hom_\CC(F(\delta), C)$ 
such that for all $\gamma, \delta \in \Ob(\Delta)$ and all $e \in \hom_\Delta(\gamma, \delta)$ the following diagram commutes:
\begin{center}
  \begin{tikzcd}
     & C & \\
     F(\gamma) \arrow[rr, "F(e)"'] \arrow[ur, "f_\gamma"] & & F(\delta) \arrow[ul, "f_\delta"']
  \end{tikzcd}
\end{center}
We then say that $C$ is the \emph{tip of the cocone}.
For a functor $G : \BB \to \CC$ let $G(\BB)$ denote the \emph{image of $G$}, that is, a subcategory of $\CC$
whose objects are of the form $G(B)$, $B \in \Ob(\CC)$, and whose morphisms are of the form $G(f)$, $f \in \hom_\BB(A, B)$
for some $A, B \in \Ob(\BB)$. Note that $G(\BB)$ need not be a full subcategory of $\CC$.

\begin{figure}
\centering
\begin{tikzcd}
  {\bullet} & {\bullet} & {\bullet}
  & & B & B & B
\\
  {\bullet} \arrow[u] \arrow[ur] & {\bullet} \arrow[ur] \arrow[ul] & {\bullet} \arrow[ul] \arrow[u]
  & & A \arrow[u] \arrow[ur] & A \arrow[ur] \arrow[ul] & A \arrow[ul] \arrow[u]
\\
  & \Delta \arrow[rrrr, "F"]  & & & & \CC  
\end{tikzcd}
\caption{An $(A,B)$-diagram in $\CC$ (of shape $\Delta$)}
\label{nrt.fig.2}
\end{figure}

Our main transferring tool is the following statement:

\begin{THM}\label{bigrd.thm.1} (cf.~\cite{masul-bigrd})
  Let $\BB$ and $\CC$ be categories whose every morphism is mono,
  and let $G : \BB \to \CC$ be a faithful functor.
  Let $B \in \Ob(\BB)$ be universal for $\BB$ and let $C \in \Ob(\CC)$ be universal for $G(\BB)$.
  Take any $A \in \Ob(\BB)$ and assume that for every $(A, B)$-diagram $F : \Delta \to \BB$ in $\BB$
  the following holds: if the amalgamation problem $GF : \Delta \to \CC$ has a solution in $\CC$ whose tip is $C$,
  then $F$ has a solution in~$\BB$. Then $T_\BB(A, B) \le T_\CC(G(A), C)$.
\end{THM}

We can now execute the first step of the plan.

\begin{THM}\label{mnmrf.thm.gen-perm-fbrd}
  The generic permutation has big Ramsey degrees.
\end{THM}
\begin{proof}
  Let $\BB = \Perm$ and $\CC = \ChEmb \times \ChEmb$.
  Let $\calQ = (\QQ, \Boxed{\prec_1}, \Boxed{\prec_2})$ be the generic permutation, and
  let $\calQ_1 = (\QQ, \Boxed{\prec_1})$ and $\calQ_2 = (\QQ, \Boxed{\prec_2})$. Clearly,
  $\calQ_1 \cong \calQ_2 \cong \QQ$ so by Theorem~\ref{mnmrf.thm.CHAIN-PROD}:
  \begin{equation}\label{mnmrf.eq.ABQQ}
    T_{\CC}\big((\calA, \calB), (\calQ_1, \calQ_2)\big) < \infty,
  \end{equation}
  for all finite chains $\calA$ and $\calB$. Define $G : \BB \to \CC$ as follows:
  $$
    G\big((A, \Boxed{\sqsubset_1}, \Boxed{\sqsubset_2})\big) = \big((A, \Boxed{\sqsubset_1}), (A, \Boxed{\sqsubset_2})\big)
  $$
  on objects, and $G(f) = (f, f)$ on morphisms. As a notational convenience, if $\calA = (A, \Boxed{\sqsubset_1}, \Boxed{\sqsubset_2})$
  is a permutation then $\calA_1 = (A, \Boxed{\sqsubset_1})$ and $\calA_2 = (A, \Boxed{\sqsubset_2})$ are the corresponding chains.
  Note that $G(\calQ) = (\calQ_1, \calQ_2)$.

  By construction, $G$ is a faithful functor. It is also clear that $(\calQ_1, \calQ_2)$ is universal for $\CC$
  and that $\calQ$ is universal for $\BB$. Take any finite permutation $\calA = (A, \Boxed{\sqsubset_1}, \Boxed{\sqsubset_2})$ and any
  $\big(\calA, \calQ)$-diagram $F : \Delta \to \BB$, and assume that
  the amalgamation problem $GF : \Delta \to \CC$ has a solution in $\CC$ whose tip is $(\calQ_1, \calQ_2)$ and morphisms are of the
  form $(f_i, g_i)$, $i \in I$, for some index set $I$:
  \begin{center}
    \begin{tikzcd}[execute at end picture={\draw (-5.25,-1.75) rectangle (4.75,0.5);}]
         & (\calQ_1, \calQ_2)  &  &  \CC & \\
         (\calQ_1, \calQ_2) \arrow[ur, shift left=3mm, pos=0.7, "f_i" description] \arrow[ur, shift right=0.5, pos=0.7, "g_i"' description]
         & & 
         (\calQ_1, \calQ_2)  \arrow[ul, shift left=0.5mm, pos=0.7, "f_j" description] \arrow[ul, shift right=3mm, pos=0.6, "g_j"' description]
         & G(\BB)
         \\
         (\calA_1, \calA_2) \arrow[u, shift left=2mm, "u_s" description] \arrow[u, shift right=2mm, "u_s"' description] \arrow[urr, shift left=0.5, pos=0.8, "v_s" description] \arrow[urr, shift right=3mm, pos=0.8, "v_s"' description]
         & & 
         (\calA_1, \calA_2) \arrow[u, shift left=2mm, "v_t" description] \arrow[u, shift right=2mm, "v_t"' description] \arrow[ull, shift left=3mm, pos=0.8, "u_t" description] \arrow[ull, shift right=0.5mm, pos=0.8, "u_t"' description]
    \end{tikzcd}
  \end{center}
  In particular, this means that:
  \begin{equation}\label{mnmrf.eq.comp-cocone}
    f_i \circ u_s = f_j \circ v_s \text{ and } g_i \circ u_s = g_j \circ v_s,
  \end{equation}
  for all $i, j \in I$ and every~$s$.

  Let $\prec_{12}$ denote the lexicographic product of $\prec_1$ and $\prec_2$ on $\QQ \times \QQ$ and let
  $\prec_{21}$ denote the antilexicographic product of $\prec_1$ and $\prec_2$ on $\QQ \times \QQ$. In other words:
  \begin{itemize}
    \item $(a, b) \mathrel{\prec_{12}} (c, d)$ if $a \mathrel{\prec_1} c$, or $a = c$ and $b \mathrel{\prec_2} d$; and
    \item $(a, b) \mathrel{\prec_{21}} (c, d)$ if $b \mathrel{\prec_2} d$, or $b = d$ and $a \mathrel{\prec_1} c$.
  \end{itemize}
  Then $\calQ^* = (\QQ \times \QQ, \Boxed{\prec_{12}}, \Boxed{\prec_{21}}) \in \Ob(\BB)$.
  For each $i \in I$ let $(f_i, g_i) : \QQ \to \QQ \times \QQ$ denote the obvious mapping $(f_i, g_i)(x) = (f_i(x), g_i(x))$.
  It is easy to see that $(f_i, g_i) \in \hom_\BB(\calQ, \calQ^*)$ for all $i \in I$:
  \begin{itemize}
    \item if $a \mathrel{\prec_1} b$ then $f_i(a) \mathrel{\prec_1} f_i(b)$ so $(f_i(a), g_i(a)) \mathrel{\prec_{12}} (f_i(b), g_i(b))$; and
    \item if $a \mathrel{\prec_2} b$ then $g_i(a) \mathrel{\prec_2} g_i(b)$ so $(f_i(a), g_i(a)) \mathrel{\prec_{21}} (f_i(b), g_i(b))$.
  \end{itemize}
  Finally, let us show that $\calQ^*$ together with morphisms $(f_i, g_i)$, $i \in I$, is a compatible cocone for $F$ in $\BB$:
  \begin{center}
    \begin{tikzcd}
      & \calQ^* & \\
      \calQ \arrow[ur, "{(f_i, g_i)}"] & & \calQ \arrow[ul, "{(f_j, g_j)}"'] \\
      \calA \arrow[u, "u_s"] \arrow[urr, "v_s"' near start] & & \calA \arrow[u, "v_t"'] \arrow[ull, "u_t" near start]
    \end{tikzcd}
  \end{center}
  This follows immediately from \eqref{mnmrf.eq.comp-cocone} because $(f_i, g_i) \circ u_s = (f_j, g_j) \circ v_s$
  is nothing but a reformulation of \eqref{mnmrf.eq.comp-cocone}, having in mind that $(f_i, g_i) \circ u_s = (f_i \circ u_s, g_i \circ u_s)$.

  Therefore, $T_\BB(\calA, \calQ) < T_\CC\big((\calA_1, \calA_2), (\calQ_1, \calQ_2)\big) < \infty$
  by Theorem~\ref{bigrd.thm.1} and~\eqref{mnmrf.eq.ABQQ}.
\end{proof}

Let $L$ be a relational language. Recall that a class $\KK$ of $L$-structures is called hereditary if the following holds:
if $\calA \in \KK$ and $\calB$ is an $L$-structure which embeds into $\calA$, then $\calB \in \KK$. 
An $L$-structure $\calU$ is universal for $\KK$ if $\calU$ embeds every $\calA \in \KK$.

\begin{THM}\label{mnmrf.thm.qf-definable} \cite[Theorem~7.1]{masul-rdbas}
  Let $L = \{ R_1, \ldots, R_n \}$ be a finite relational language, let
  $M = \{ S_j : j \in J \}$ be a relational language and let
  $\Phi = \{ \phi_j : j \in J \}$ be a set of quantifier-free $L$-formulas.
  Let $\KK^*$ be a hereditary class of at most countably infinite $L$-structures and let
  $\KK$ be the class of all the $M$-structures which are definable by $\Phi$ in~$\KK^*$.
  Let $\calS^* \in \KK^*$ be universal for $\KK^*$ and let $\calS \in \KK$ be the $M$-structure
  definable in $\calS^*$ by $\Phi$. Then $\calS$ is universal for $\KK$, and
  if $\calS^*$ has finite big Ramsey degrees then so does $\calS$.
\end{THM}

\begin{THM} \sout{\cite{hubicka-param-spaces}}
  The generic \textcolor{blue}{2-dimensional} partial order has big Ramsey degrees.
\end{THM}
\begin{proof}
  Let $\calQ = (\QQ, \Boxed{\prec_1}, \Boxed{\prec_2})$ be the generic permutation and define
  $\preccurlyeq$ on $\QQ$ as follows:
  $$
    a \preccurlyeq b \text{ iff } a = b \lor (a \mathrel{\prec_1} b \land a \mathrel{\prec_2} b),
  $$
  and let us denote by $\phi(a, b)$ the quantifier-free formula on the right of iff.
  
  \medskip

  \sout{Claim. Let $A, B, C \subseteq \QQ$ be three finite (possibly empty) pairwise disjoint sets such that}
  \begin{itemize}
    \item \sout{$a \preccurlyeq c$ for all $a \in A$ and $c \in C$;}
    \item \sout{$b \not\preccurlyeq a$ for all $a \in A$ and $b \in B$; and}
    \item \sout{$c \not\preccurlyeq b$ for all $c \in C$ and $b \in B$.}
  \end{itemize}
  \sout{Then there exists an $s \in \QQ \setminus (A \cup B \cup C)$ such that}
  \begin{itemize}
    \item \sout{$a \preccurlyeq s \preccurlyeq c$ for all $a \in A$ and $c \in C$; and}
    \item \sout{$s \not\preccurlyeq b$ and $b \not\preccurlyeq s$ for all $b \in B$.}
  \end{itemize}
  
  \sout{Proof. Straightforward.}

  \medskip
  
  \sout{Therefore, if} \textcolor{blue}{If} $\KK^*$ denotes the class of all finite or countably infinite permutations,
  then the class of all structures that are definable by $\phi$ in $\KK^*$
  is precisely the class of all finite or countably infinite \textcolor{blue}{2-dimensional} partial orders.
  \sout{Moreover,} \textcolor{blue}{In what follows, we are going to prove that} $(\QQ, \Boxed{\preccurlyeq})$ is
  the generic \textcolor{blue}{2-dimensional} partial order. Since the generic permutation $\calQ$ has big Ramsey degrees, Theorem~\ref{mnmrf.thm.qf-definable}
  immediately yields that the generic \textcolor{blue}{2-dimensional} poset $(\QQ, \Boxed{\preccurlyeq})$ has big Ramsey degrees.
\end{proof}

\textcolor{blue}\bgroup

The \emph{dimension} of a finite partially ordered set $(P, \Boxed{\preceq})$ was introduced in 1941
by Dushnik and Miller~\cite{dushnik-miller-1941} as the least number of linear extensions of $\preceq$
whose intersection is $\preceq$. Equivalently, the dimension of $(P, \Boxed{\preceq})$ is the least
number $d$ of finite linear orders $(L_1, \le_1)$, \ldots, $(L_d, \le_d)$ such that
$(P, \Boxed{\preceq})$ embeds into $(L_1, \le_1) \times \ldots \times (L_d, \le_d)$ (see \cite{ore-1962}).
A finite partially ordered set $(P, \Boxed{\preceq})$ is \emph{at most 2-dimensional} if its dimension is $\le 2$.

\begin{figure}
  \centering
  \textcolor{blue}\bgroup
\begin{pgfpicture}
  \pgfsetxvec{\pgfpoint{\acadpgfunit}{0pt}}
  \pgfsetyvec{\pgfpoint{0pt}{\acadpgfunit}}
  \pgfsetlinewidth{\acadpgflinewidth}
  \pgftransformshift{\pgfpointxy{-237.5}{-212.5}}

  \begin{pgfscope}
    \pgfpathmoveto{\pgfpointxy{1050.0}{475.0}}
    \pgfpathlineto{\pgfpointxy{1050.0}{525.0}}
    \pgfusepath{stroke}
  \end{pgfscope}
  \begin{pgfscope}
    \pgfpathmoveto{\pgfpointxy{1065.0}{530.0}}
    \pgfpathlineto{\pgfpointxy{1110.0}{470.0}}
    \pgfusepath{stroke}
  \end{pgfscope}
  \begin{pgfscope}
    \pgfpathmoveto{\pgfpointxy{1140.0}{470.0}}
    \pgfpathlineto{\pgfpointxy{1185.0}{530.0}}
    \pgfusepath{stroke}
  \end{pgfscope}
  \begin{pgfscope}
    \pgfpathmoveto{\pgfpointxy{1200.0}{525.0}}
    \pgfpathlineto{\pgfpointxy{1200.0}{475.0}}
    \pgfusepath{stroke}
  \end{pgfscope}
  \begin{pgfscope}
    \pgfpathmoveto{\pgfpointxy{1185.0}{470.0}}
    \pgfpathlineto{\pgfpointxy{1140.0}{530.0}}
    \pgfusepath{stroke}
  \end{pgfscope}
  \begin{pgfscope}
    \pgfpathmoveto{\pgfpointxy{1110.0}{530.0}}
    \pgfpathlineto{\pgfpointxy{1065.0}{470.0}}
    \pgfusepath{stroke}
  \end{pgfscope}
  \begin{pgfscope}
    \pgfpathmoveto{\pgfpointxy{785.0}{370.0}}
    \pgfpathlineto{\pgfpointxy{740.0}{430.0}}
    \pgfusepath{stroke}
  \end{pgfscope}
  \begin{pgfscope}
    \pgfpathmoveto{\pgfpointxy{710.0}{430.0}}
    \pgfpathlineto{\pgfpointxy{665.0}{370.0}}
    \pgfusepath{stroke}
  \end{pgfscope}
  \begin{pgfscope}
    \pgfpathmoveto{\pgfpointxy{650.0}{575.0}}
    \pgfpathlineto{\pgfpointxy{650.0}{625.0}}
    \pgfusepath{stroke}
  \end{pgfscope}
  \begin{pgfscope}
    \pgfpathmoveto{\pgfpointxy{665.0}{630.0}}
    \pgfpathlineto{\pgfpointxy{710.0}{570.0}}
    \pgfusepath{stroke}
  \end{pgfscope}
  \begin{pgfscope}
    \pgfpathmoveto{\pgfpointxy{740.0}{570.0}}
    \pgfpathlineto{\pgfpointxy{785.0}{630.0}}
    \pgfusepath{stroke}
  \end{pgfscope}
  \begin{pgfscope}
    \pgfpathmoveto{\pgfpointxy{800.0}{625.0}}
    \pgfpathlineto{\pgfpointxy{800.0}{575.0}}
    \pgfusepath{stroke}
  \end{pgfscope}
  \begin{pgfscope}
    \pgfpathmoveto{\pgfpointxy{1000.0}{625.0}}
    \pgfpathlineto{\pgfpointxy{1250.0}{625.0}}
    \pgfpathlineto{\pgfpointxy{1250.0}{350.0}}
    \pgfpathlineto{\pgfpointxy{1000.0}{350.0}}
    \pgfpathclose
    \pgfusepath{stroke}
  \end{pgfscope}
  \begin{pgfscope}
    \pgfpathmoveto{\pgfpointxy{484.487}{556.936}}
    \pgfpathlineto{\pgfpointxy{596.987}{606.936}}
    \pgfusepath{stroke}
  \end{pgfscope}
  \begin{pgfscope}
    \pgfpathmoveto{\pgfpointxy{479.802}{567.478}}
    \pgfpatharcaxes{113.962}{293.962}{\pgfpointxy{5.7681}{0.0}}{\pgfpointxy{0.0}{5.7681}}
    \pgfusepath{stroke}
  \end{pgfscope}
  \begin{pgfscope}
    \pgfpathmoveto{\pgfpointxy{573.978}{603.307}}
    \pgfpatharcaxes{263.962}{293.962}{\pgfpointxy{45.0}{0.0}}{\pgfpointxy{0.0}{45.0}}
    \pgfusepath{stroke}
  \end{pgfscope}
  \begin{pgfscope}
    \pgfpathmoveto{\pgfpointxy{596.987}{606.936}}
    \pgfpatharcaxes{113.962}{143.962}{\pgfpointxy{45.0}{0.0}}{\pgfpointxy{0.0}{45.0}}
    \pgfusepath{stroke}
  \end{pgfscope}
  \begin{pgfscope}
    \pgfpathmoveto{\pgfpointxy{484.487}{443.064}}
    \pgfpathlineto{\pgfpointxy{596.987}{393.064}}
    \pgfusepath{stroke}
  \end{pgfscope}
  \begin{pgfscope}
    \pgfpathmoveto{\pgfpointxy{484.487}{443.064}}
    \pgfpatharcaxes{66.0375}{246.038}{\pgfpointxy{5.7681}{0.0}}{\pgfpointxy{0.0}{5.7681}}
    \pgfusepath{stroke}
  \end{pgfscope}
  \begin{pgfscope}
    \pgfpathmoveto{\pgfpointxy{596.987}{393.064}}
    \pgfpatharcaxes{66.0375}{96.0375}{\pgfpointxy{45.0}{0.0}}{\pgfpointxy{0.0}{45.0}}
    \pgfusepath{stroke}
  \end{pgfscope}
  \begin{pgfscope}
    \pgfpathmoveto{\pgfpointxy{578.875}{407.711}}
    \pgfpatharcaxes{216.038}{246.038}{\pgfpointxy{45.0}{0.0}}{\pgfpointxy{0.0}{45.0}}
    \pgfusepath{stroke}
  \end{pgfscope}
  \begin{pgfscope}
    \pgfsetdash{{2.5pt}{1.5pt}}{0pt}
    \pgfpathmoveto{\pgfpointxy{846.987}{605.564}}
    \pgfpathlineto{\pgfpointxy{959.487}{555.564}}
    \pgfusepath{stroke}
  \end{pgfscope}
  \begin{pgfscope}
    \pgfpathmoveto{\pgfpointxy{851.672}{616.106}}
    \pgfpatharcaxes{66.0375}{246.038}{\pgfpointxy{5.7681}{0.0}}{\pgfpointxy{0.0}{5.7681}}
    \pgfusepath{stroke}
  \end{pgfscope}
  \begin{pgfscope}
    \pgfpathmoveto{\pgfpointxy{959.487}{555.564}}
    \pgfpatharcaxes{66.0375}{96.0375}{\pgfpointxy{45.0}{0.0}}{\pgfpointxy{0.0}{45.0}}
    \pgfusepath{stroke}
  \end{pgfscope}
  \begin{pgfscope}
    \pgfpathmoveto{\pgfpointxy{941.375}{570.211}}
    \pgfpatharcaxes{216.038}{246.038}{\pgfpointxy{45.0}{0.0}}{\pgfpointxy{0.0}{45.0}}
    \pgfusepath{stroke}
  \end{pgfscope}
  \begin{pgfscope}
    \pgfsetdash{{2.5pt}{1.5pt}}{0pt}
    \pgfpathmoveto{\pgfpointxy{846.987}{381.936}}
    \pgfpathlineto{\pgfpointxy{959.487}{431.936}}
    \pgfusepath{stroke}
  \end{pgfscope}
  \begin{pgfscope}
    \pgfpathmoveto{\pgfpointxy{846.987}{381.936}}
    \pgfpatharcaxes{113.962}{293.962}{\pgfpointxy{5.7681}{0.0}}{\pgfpointxy{0.0}{5.7681}}
    \pgfusepath{stroke}
  \end{pgfscope}
  \begin{pgfscope}
    \pgfpathmoveto{\pgfpointxy{936.478}{428.307}}
    \pgfpatharcaxes{263.962}{293.962}{\pgfpointxy{45.0}{0.0}}{\pgfpointxy{0.0}{45.0}}
    \pgfusepath{stroke}
  \end{pgfscope}
  \begin{pgfscope}
    \pgfpathmoveto{\pgfpointxy{959.487}{431.936}}
    \pgfpatharcaxes{113.962}{143.962}{\pgfpointxy{45.0}{0.0}}{\pgfpointxy{0.0}{45.0}}
    \pgfusepath{stroke}
  \end{pgfscope}
  \pgftext[at={\pgfpointxy{1050.0}{450.0}}]{$a$}
  \pgftext[at={\pgfpointxy{1125.0}{450.0}}]{$b$}
  \pgftext[at={\pgfpointxy{1200.0}{450.0}}]{$c$}
  \pgftext[at={\pgfpointxy{1050.0}{550.0}}]{$x$}
  \pgftext[at={\pgfpointxy{1125.0}{550.0}}]{$y$}
  \pgftext[at={\pgfpointxy{1200.0}{550.0}}]{$z$}
  \pgftext[at={\pgfpointxy{650.0}{350.0}}]{$a$}
  \pgftext[at={\pgfpointxy{725.0}{350.0}}]{$b$}
  \pgftext[at={\pgfpointxy{800.0}{350.0}}]{$c$}
  \pgftext[at={\pgfpointxy{725.0}{450.0}}]{$y$}
  \pgftext[at={\pgfpointxy{650.0}{550.0}}]{$a$}
  \pgftext[at={\pgfpointxy{725.0}{550.0}}]{$b$}
  \pgftext[at={\pgfpointxy{800.0}{550.0}}]{$c$}
  \pgftext[at={\pgfpointxy{650.0}{650.0}}]{$x$}
  \pgftext[at={\pgfpointxy{800.0}{650.0}}]{$z$}
  \pgftext[at={\pgfpointxy{325.0}{500.0}}]{$a$}
  \pgftext[at={\pgfpointxy{400.0}{500.0}}]{$b$}
  \pgftext[at={\pgfpointxy{475.0}{500.0}}]{$c$}
  \pgftext[bottom,at={\pgfpointxy{1125.0}{637.0}}]{$\calD$}
  \pgftext[top,at={\pgfpointxy{725.0}{313.0}}]{$\calB$}
  \pgftext[bottom,at={\pgfpointxy{725.0}{662.0}}]{$\calC$}
  \pgftext[bottom,at={\pgfpointxy{400.0}{537.0}}]{$\calA$}
\end{pgfpicture}
    \caption{A counterexample for the amalgamation property for the class of all at most 2-dimensional finite linear orders}
  \egroup
  \label{mnmrf-c.fig.no-amalg}
\end{figure}

The class of all at most 2-dimensional finite linear orders is not a \Fraisse\ age. Namely,
it is a well-known fact that one of the fundamental properties of a \Fraisse\ age is the
\emph{amalgamation property (AP)}, which is a requirement that
every span $\calB \overset f \hookleftarrow \calA \overset g \hookrightarrow \calC$ with $\calA, \calB, \calC \in \KK$
completes to a square with $\calD \in \KK$:
\begin{center}
  \begin{tikzcd}
    \calC \arrow[r, hookrightarrow, "g'"] & \calD\\
    \calA \arrow[r, hookrightarrow, "f"'] \arrow[u, hookrightarrow, "g"] & \calB \arrow[u, hookrightarrow, "f'"']
  \end{tikzcd}
\end{center}
This property fails for the class of all at most 2-dimensional finite linear orders: the span
$\calB \hookleftarrow \calA \hookrightarrow \calC$ in Fig.~\ref{mnmrf-c.fig.no-amalg}
can easily be completed to a square in the class of \emph{all} partially ordered sets;
however, every candidate for $\calD$ embeds a crown on six elements, so the dimension of each such $\calD$ is at least~3.

Nevertheless, we shall prove that the reduct of the generic permutation we are interested in is the \emph{weak \Fraisse\ limit} of this class, and hence a generic structure.

A class $\KK$ of finite relational structures over the same relational language has the \emph{joint embedding property (JEP)}
if every pair of structures from $\KK$ embeds into a structure from $\KK$; and it has the \emph{weak
amalgamation property (WAP)} if for every $\calA \in \KK$ there is a $\overline\calA \in \KK$ and
an embedding $e : \calA \hookrightarrow \overline\calA$ such that for every choice of $\calB, \calC \in \KK$
and embeddings $f : \overline\calA \hookrightarrow \calB$ and $g : \overline\calA \hookrightarrow \calC$
there exist a $\calD \in \KK$ and embeddings $f' : \calB \hookrightarrow \calD$ and
$g' : \calC \hookrightarrow \calD$ satisfying $f' \circ f \circ e = g' \circ g \circ e$:
\begin{center}
  \begin{tikzcd}
                          & \calC \arrow[r, hookrightarrow, "g'"] & \calD\\
    \calA \arrow[r, hookrightarrow, "e"'] & \overline \calA \arrow[r, hookrightarrow, "f"'] \arrow[u, hookrightarrow, "g"] & \calB \arrow[u, hookrightarrow, "f'"']
  \end{tikzcd}
\end{center}

The importance of the weak amalgamation property was first recognized by Ivanov \cite{ivanov-1999}
and later independently by Kechris and Rosendal \cite{kechris-rosendal-2007} in their study of generic automorphisms.
The importance of the weak amalgamation property with respect to the existence of generic structures was
recognized independently by Kubi\'s \cite{kubis-2022}, and Panagiotopoulos and Tent \cite{panag-tent-2022}.
Namely, if $\KK$ is an age of a countable relational structure with (JEP) and (WAP), then there is a
unique (up to isomorphism) countable structure $\Omega$ on $\omega = \{0, 1, 2, \ldots\}$ with $\Age(\Omega) = \KK$
whose isomorphism class is comeager in a naturally defined Baire space of all structures on $\omega$ whose age is $\KK$,
(see~\cite{panag-tent-2022} for details). We call $\Omega$ the \emph{weak \Fraisse\ limit of $\KK$}
and say that $\Omega$ is \emph{generic} among countable structures whose age is $\KK$.

In order to verify the weak amalgamation property,
it suffices to show that for every structure $\calA$ there is a well-behaved structure $\overline\calA$ into which
$\calA$ embeds, and over which we can weakly amalgamate. As we shall se below, in the case of finite posets of dimension $\le 2$
the well-behaved structures $\overline\calA$ are of the form $(A, \Boxed{\le^A}) \times (B, \Boxed{\le^B})$
where $(A, \Boxed{\le^A})$ and $(B, \Boxed{\le^B})$ are finite linear orders. 
We shall now introduce a bit of notation and establish several elementary facts about posets of this form,
which will be used in the main argument below.

Let $(A, \Boxed{\le^A})$ and $(B, \Boxed{\le^B})$ be finite linear orders. Then $(A, \Boxed{\le^A}) \times (B, \Boxed{\le^B})$
is a finite partially ordered set $(A \times B, \Boxed{\sqsubseteq^{A \times B}})$ where
$$
  (a_1, b_1) \mathrel{\sqsubseteq^{A \times B}} (a_2, b_2) \text{ iff } a_1 \le^A a_2 \land b_1 \le^B b_2.
$$
Let $\lex^{A \times B}$ and $\alex^{A \times B}$ denote the \emph{lexicographic} and \emph{antilexicographic} order
on $A \times B$, respectively, which are linear orders defined as follows:
\begin{align*}
  (a_1, b_1) \mathrel{\lex^{A \times B}} (a_2, b_2) \text{ iff }
  & a_1 <^A a_2 \lor (a_1 = a_2 \land b_1 \le^B b_2), \text{ and}\\
  (a_1, b_1) \mathrel{\alex^{A \times B}} (a_2, b_2) \text{ iff }
  & b_1 <^B b_2 \lor (b_1 = b_2 \land a_1 \le^A a_2).
\end{align*}
Then it is easy to show that:
$$
  \Boxed{\sqsubseteq^{A \times B}} = (\Boxed{\lex^{A \times B}}) \cap (\Boxed{\alex^{A \times B}}).
$$

\begin{LEMA}
  Let $(A, \Boxed{\le^A})$ and $(B, \Boxed{\le^B})$ be nonempty finite linear orders, and let
  $(A \times B, \Boxed{\sqsubseteq^{A \times B}}) = (A, \Boxed{\le^A}) \times (B, \Boxed{\le^B})$.
  Let $\le_1^{A \times B}$ and $\le_2^{A \times B}$ be linear orders on $A \times B$ such that
  $\Boxed{\sqsubseteq^{A \times B}} = (\Boxed{\le_1^{A \times B}}) \cap (\Boxed{\le_2^{A \times B}})$. Then
  $\{\Boxed{\le_1^{A \times B}}, \Boxed{\le_2^{A \times B}}\} = \{\Boxed{\lex^{A \times B}}, \Boxed{\alex^{A \times B}}\}$.
\end{LEMA}
\begin{proof}
  The proof is by induction on $|B|$. If $|B| = 1$ then $(A \times B, \Boxed{\sqsubseteq^{A \times B}}) \cong (A, \Boxed{\le^A})$
  and:
  $$
    (\le_1^{A \times B}) = (\le_2^{A \times B}) = (\lex^{A \times B}) = (\alex^{A \times B}).
  $$
  For the induction step, assume that the statement is true whenever $|B| = n - 1$ and let $(B, \Boxed{\le^B})$ be an $n$-element
  linearly ordered set, say $B = \{1, 2, \ldots, n\}$ with the usual ordering of the integers, where $n \ge 2$.
  For the sake of simplicity, let $A = \{a, b, c, \ldots, z\}$ so that
  $$
    A \times B = \{a_1, b_1, \ldots, z_1, \, a_2, b_2, \ldots, z_2, \, \ldots, \, a_n, b_n, \ldots, z_n\},
  $$
  see Fig.~\ref{corrigendum.fig.AxB}.

  \begin{figure}
    \centering
	\textcolor{blue}\bgroup
\begin{pgfpicture}
  \pgfsetxvec{\pgfpoint{\acadpgfunit}{0pt}}
  \pgfsetyvec{\pgfpoint{0pt}{\acadpgfunit}}
  \pgfsetlinewidth{\acadpgflinewidth}
  \pgftransformshift{\pgfpointxy{50.0}{-112.5}}

  \begin{pgfscope}
    \pgfpathmoveto{\pgfpointxy{126.517}{676.517}}
    \pgfpathlineto{\pgfpointxy{173.483}{723.483}}
    \pgfusepath{stroke}
  \end{pgfscope}
  \begin{pgfscope}
    \pgfpathmoveto{\pgfpointxy{226.517}{776.517}}
    \pgfpathlineto{\pgfpointxy{273.483}{823.483}}
    \pgfusepath{stroke}
  \end{pgfscope}
  \begin{pgfscope}
    \pgfpathmoveto{\pgfpointxy{326.517}{876.517}}
    \pgfpathlineto{\pgfpointxy{373.483}{923.483}}
    \pgfusepath{stroke}
  \end{pgfscope}
  \begin{pgfscope}
    \pgfpathmoveto{\pgfpointxy{426.517}{976.517}}
    \pgfpathlineto{\pgfpointxy{473.483}{1023.48}}
    \pgfusepath{stroke}
  \end{pgfscope}
  \begin{pgfscope}
    \pgfpathmoveto{\pgfpointxy{526.517}{1023.48}}
    \pgfpathlineto{\pgfpointxy{573.483}{976.517}}
    \pgfusepath{stroke}
  \end{pgfscope}
  \begin{pgfscope}
    \pgfpathmoveto{\pgfpointxy{226.517}{523.483}}
    \pgfpathlineto{\pgfpointxy{273.483}{476.517}}
    \pgfusepath{stroke}
  \end{pgfscope}
  \begin{pgfscope}
    \pgfpathmoveto{\pgfpointxy{326.517}{476.517}}
    \pgfpathlineto{\pgfpointxy{373.483}{523.483}}
    \pgfusepath{stroke}
  \end{pgfscope}
  \begin{pgfscope}
    \pgfpathmoveto{\pgfpointxy{426.517}{576.517}}
    \pgfpathlineto{\pgfpointxy{473.483}{623.483}}
    \pgfusepath{stroke}
  \end{pgfscope}
  \begin{pgfscope}
    \pgfpathmoveto{\pgfpointxy{526.517}{676.517}}
    \pgfpathlineto{\pgfpointxy{573.483}{723.483}}
    \pgfusepath{stroke}
  \end{pgfscope}
  \begin{pgfscope}
    \pgfpathmoveto{\pgfpointxy{626.517}{776.517}}
    \pgfpathlineto{\pgfpointxy{673.483}{823.483}}
    \pgfusepath{stroke}
  \end{pgfscope}
  \begin{pgfscope}
    \pgfpathmoveto{\pgfpointxy{726.517}{823.483}}
    \pgfpathlineto{\pgfpointxy{773.483}{776.517}}
    \pgfusepath{stroke}
  \end{pgfscope}
  \begin{pgfscope}
    \pgfpathmoveto{\pgfpointxy{773.483}{723.483}}
    \pgfpathlineto{\pgfpointxy{726.517}{676.517}}
    \pgfusepath{stroke}
  \end{pgfscope}
  \begin{pgfscope}
    \pgfpathmoveto{\pgfpointxy{673.483}{623.483}}
    \pgfpathlineto{\pgfpointxy{626.517}{576.517}}
    \pgfusepath{stroke}
  \end{pgfscope}
  \begin{pgfscope}
    \pgfpathmoveto{\pgfpointxy{573.483}{523.483}}
    \pgfpathlineto{\pgfpointxy{526.517}{476.517}}
    \pgfusepath{stroke}
  \end{pgfscope}
  \begin{pgfscope}
    \pgfpathmoveto{\pgfpointxy{473.483}{423.483}}
    \pgfpathlineto{\pgfpointxy{426.517}{376.517}}
    \pgfusepath{stroke}
  \end{pgfscope}
  \begin{pgfscope}
    \pgfpathmoveto{\pgfpointxy{426.517}{323.483}}
    \pgfpathlineto{\pgfpointxy{473.483}{276.517}}
    \pgfusepath{stroke}
  \end{pgfscope}
  \begin{pgfscope}
    \pgfpathmoveto{\pgfpointxy{526.517}{276.517}}
    \pgfpathlineto{\pgfpointxy{573.483}{323.483}}
    \pgfusepath{stroke}
  \end{pgfscope}
  \begin{pgfscope}
    \pgfpathmoveto{\pgfpointxy{626.517}{376.517}}
    \pgfpathlineto{\pgfpointxy{673.483}{423.483}}
    \pgfusepath{stroke}
  \end{pgfscope}
  \begin{pgfscope}
    \pgfpathmoveto{\pgfpointxy{726.517}{476.517}}
    \pgfpathlineto{\pgfpointxy{773.483}{523.483}}
    \pgfusepath{stroke}
  \end{pgfscope}
  \begin{pgfscope}
    \pgfpathmoveto{\pgfpointxy{826.517}{576.517}}
    \pgfpathlineto{\pgfpointxy{873.483}{623.483}}
    \pgfusepath{stroke}
  \end{pgfscope}
  \begin{pgfscope}
    \pgfpathmoveto{\pgfpointxy{873.483}{676.517}}
    \pgfpathlineto{\pgfpointxy{826.517}{723.483}}
    \pgfusepath{stroke}
  \end{pgfscope}
  \begin{pgfscope}
    \pgfpathmoveto{\pgfpointxy{673.483}{876.517}}
    \pgfpathlineto{\pgfpointxy{626.517}{923.483}}
    \pgfusepath{stroke}
  \end{pgfscope}
  \begin{pgfscope}
    \pgfpathmoveto{\pgfpointxy{373.483}{376.517}}
    \pgfpathlineto{\pgfpointxy{326.517}{423.483}}
    \pgfusepath{stroke}
  \end{pgfscope}
  \begin{pgfscope}
    \pgfpathmoveto{\pgfpointxy{173.483}{576.517}}
    \pgfpathlineto{\pgfpointxy{126.517}{623.483}}
    \pgfusepath{stroke}
  \end{pgfscope}
  \begin{pgfscope}
    \pgfpathmoveto{\pgfpointxy{226.517}{723.483}}
    \pgfpathlineto{\pgfpointxy{273.483}{676.517}}
    \pgfusepath{stroke}
  \end{pgfscope}
  \begin{pgfscope}
    \pgfpathmoveto{\pgfpointxy{326.517}{623.483}}
    \pgfpathlineto{\pgfpointxy{373.483}{576.517}}
    \pgfusepath{stroke}
  \end{pgfscope}
  \begin{pgfscope}
    \pgfpathmoveto{\pgfpointxy{426.517}{523.483}}
    \pgfpathlineto{\pgfpointxy{473.483}{476.517}}
    \pgfusepath{stroke}
  \end{pgfscope}
  \begin{pgfscope}
    \pgfpathmoveto{\pgfpointxy{526.517}{423.483}}
    \pgfpathlineto{\pgfpointxy{573.483}{376.517}}
    \pgfusepath{stroke}
  \end{pgfscope}
  \begin{pgfscope}
    \pgfpathmoveto{\pgfpointxy{326.517}{823.483}}
    \pgfpathlineto{\pgfpointxy{373.483}{776.517}}
    \pgfusepath{stroke}
  \end{pgfscope}
  \begin{pgfscope}
    \pgfpathmoveto{\pgfpointxy{426.517}{723.483}}
    \pgfpathlineto{\pgfpointxy{473.483}{676.517}}
    \pgfusepath{stroke}
  \end{pgfscope}
  \begin{pgfscope}
    \pgfpathmoveto{\pgfpointxy{526.517}{623.483}}
    \pgfpathlineto{\pgfpointxy{573.483}{576.517}}
    \pgfusepath{stroke}
  \end{pgfscope}
  \begin{pgfscope}
    \pgfpathmoveto{\pgfpointxy{626.517}{523.483}}
    \pgfpathlineto{\pgfpointxy{673.483}{476.517}}
    \pgfusepath{stroke}
  \end{pgfscope}
  \begin{pgfscope}
    \pgfpathmoveto{\pgfpointxy{773.483}{576.517}}
    \pgfpathlineto{\pgfpointxy{726.517}{623.483}}
    \pgfusepath{stroke}
  \end{pgfscope}
  \begin{pgfscope}
    \pgfpathmoveto{\pgfpointxy{673.483}{676.517}}
    \pgfpathlineto{\pgfpointxy{626.517}{723.483}}
    \pgfusepath{stroke}
  \end{pgfscope}
  \begin{pgfscope}
    \pgfpathmoveto{\pgfpointxy{573.483}{776.517}}
    \pgfpathlineto{\pgfpointxy{526.517}{823.483}}
    \pgfusepath{stroke}
  \end{pgfscope}
  \begin{pgfscope}
    \pgfpathmoveto{\pgfpointxy{473.483}{876.517}}
    \pgfpathlineto{\pgfpointxy{426.517}{923.483}}
    \pgfusepath{stroke}
  \end{pgfscope}
  \begin{pgfscope}
    \pgfsetdash{{1.5pt}{2pt}}{0pt}
    \pgfpathmoveto{\pgfpointxy{900.0}{550.0}}
    \pgfpathlineto{\pgfpointxy{400.0}{1050.0}}
    \pgfusepath{stroke}
  \end{pgfscope}
  \begin{pgfscope}
    \pgfsetdash{{1.5pt}{2pt}}{0pt}
    \pgfpathmoveto{\pgfpointxy{400.0}{1050.0}}
    \pgfpathlineto{\pgfpointxy{0.0}{650.0}}
    \pgfusepath{stroke}
  \end{pgfscope}
  \begin{pgfscope}
    \pgfsetdash{{1.5pt}{2pt}}{0pt}
    \pgfpathmoveto{\pgfpointxy{0.0}{650.0}}
    \pgfpathlineto{\pgfpointxy{500.0}{150.0}}
    \pgfusepath{stroke}
  \end{pgfscope}
  \begin{pgfscope}
    \pgfsetdash{{1.5pt}{2pt}}{0pt}
    \pgfpathmoveto{\pgfpointxy{500.0}{150.0}}
    \pgfpathlineto{\pgfpointxy{900.0}{550.0}}
    \pgfusepath{stroke}
  \end{pgfscope}
  \pgftext[at={\pgfpointxy{500.0}{250.0}}]{$a_1$}
  \pgftext[at={\pgfpointxy{400.0}{350.0}}]{$b_1$}
  \pgftext[at={\pgfpointxy{300.0}{450.0}}]{$c_1$}
  \pgftext[at={\pgfpointxy{100.0}{650.0}}]{$z_1$}
  \pgftext[at={\pgfpointxy{201.525}{559.913}}]{$\ddots$}
  \pgftext[at={\pgfpointxy{600.0}{350.0}}]{$a_2$}
  \pgftext[at={\pgfpointxy{500.0}{450.0}}]{$b_2$}
  \pgftext[at={\pgfpointxy{400.0}{550.0}}]{$c_2$}
  \pgftext[at={\pgfpointxy{200.0}{750.0}}]{$z_2$}
  \pgftext[at={\pgfpointxy{301.525}{659.913}}]{$\ddots$}
  \pgftext[at={\pgfpointxy{700.763}{459.151}}]{$\iddots$}
  \pgftext[at={\pgfpointxy{600.763}{559.151}}]{$\iddots$}
  \pgftext[at={\pgfpointxy{500.763}{659.151}}]{$\iddots$}
  \pgftext[at={\pgfpointxy{300.763}{859.151}}]{$\iddots$}
  \pgftext[at={\pgfpointxy{800.0}{550.0}}]{$a_{n-1}$}
  \pgftext[at={\pgfpointxy{700.0}{650.0}}]{$b_{n-1}$}
  \pgftext[at={\pgfpointxy{600.0}{750.0}}]{$c_{n-1}$}
  \pgftext[at={\pgfpointxy{400.0}{950.0}}]{$z_{n-1}$}
  \pgftext[at={\pgfpointxy{501.525}{859.913}}]{$\ddots$}
  \pgftext[at={\pgfpointxy{900.0}{650.0}}]{$a_n$}
  \pgftext[at={\pgfpointxy{800.0}{750.0}}]{$b_n$}
  \pgftext[at={\pgfpointxy{700.0}{850.0}}]{$c_n$}
  \pgftext[at={\pgfpointxy{500.0}{1050.0}}]{$z_n$}
  \pgftext[at={\pgfpointxy{601.525}{959.913}}]{$\ddots$}
  \pgftext[at={\pgfpointxy{401.525}{759.913}}]{$\ddots$}
  \pgftext[top,left,at={\pgfpointxy{708.0}{342.0}}]{$A \times B_0$}
\end{pgfpicture}
	  \caption{$(A \times B, \Boxed{\sqsubseteq^{A \times B}})$ in the proof of Lemma A}
	\egroup
	\label{corrigendum.fig.AxB}
  \end{figure}

  Let $\le_1^{A \times B}$ and $\le_2^{A \times B}$ be linear orders on $A \times B$ such that
  $$
    \Boxed{\sqsubseteq^{A \times B}} = (\Boxed{\le_1^{A \times B}}) \cap (\Boxed{\le_2^{A \times B}}).
  $$
  Put $B_0 = \{1, 2, \ldots, n-1\}$ and let $\le_1^{A \times B_0}$ and $\le_2^{A \times B_0}$ be the restrictions of
  $\le_1^{A \times B}$ and $\le_2^{A \times B}$, respectively, to $A \times B_0$. Then 
  $$
    \Boxed{\sqsubseteq^{A \times B_0}} = (\Boxed{\le_1^{A \times B_0}}) \cap (\Boxed{\le_2^{A \times B_0}}),
  $$
  so, by the induction hypothesis, 
  $$
    \{\Boxed{\le_1^{A \times B_0}}, \Boxed{\le_2^{A \times B_0}}\} = \{\Boxed{\lex^{A \times B_0}}, \Boxed{\alex^{A \times B_0}}\}.
  $$
  Without loss of generality we can assume that:
  \begin{align*}
    \Boxed{\le_1^{A \times B_0}} = \Boxed{\alex^{A \times B_0}} = a_1 b_1 \ldots z_1 \; a_2 b_2 \ldots z_2 \; \ldots \; a_{n-1} b_{n-1} \ldots z_{n-1};\\
    \Boxed{\le_2^{A \times B_0}} = \Boxed{\lex^{A \times B_0}} = a_1 a_2 \ldots a_{n-1} \; b_1 b_2 \ldots b_{n-1} \; \ldots \; z_1 z_2 \ldots z_{n-1}.
  \end{align*}
  Let us show that there is exactly one way to reconstruct $\Boxed{\le_1^{A \times B}}$ and $\Boxed{\le_2^{A \times B}}$
  from $\Boxed{\le_1^{A \times B_0}}$ and $\Boxed{\le_2^{A \times B_0}}$.
  
  Let us start with $a_n$. Since $a_{n-1} \mathrel{\sqsubseteq^{A \times B}} a_n$ it follows that
  $a_{n-1} \mathrel{\le_1^{A \times B}} a_n$ and $a_{n-1} \mathrel{\le_2^{A \times B}} a_n$.
  By the induction hypothesis we have that $b_1 \mathrel{\le_1^{A \times B_0}} a_{n-1}$, so
  $b_1 \mathrel{\le_1^{A \times B}} a_{n}$. Then $a_n \mathrel{\le_2^{A \times B}} b_1$ because
  $a_n$ and $b_1$ are incomparable with respect to $\Boxed{\sqsubseteq^{A \times B}} = (\Boxed{\le_1^{A \times B}}) \cap (\Boxed{\le_2^{A \times B}})$.
  Therefore,
  $$
    a_{n-1} \mathrel{\le_2^{A \times B}} a_n \mathrel{\le_2^{A \times B}} b_1,
  $$
  whence follows that $\Boxed{\le_2^{A \times B}}$ contains the following linear order:
  $$
    \Boxed{\le_2^{A \times B}}: \; a_1 a_2 \ldots a_{n-1} \underset\uparrow{a_n} \; b_1 b_2 \ldots b_{n-1} \; \ldots \; z_1 z_2 \ldots z_{n-1}
  $$
  Consequently, $a_n \mathrel{\le_2^{A \times B}} z_{n-1}$, so $z_{n-1} \mathrel{\le_1^{A \times B}} a_n$ because
  $a_n$ and $z_{n-1}$ are incomparable with respect to $\Boxed{\sqsubseteq^{A \times B}}$. This implies that
  $\Boxed{\le_1^{A \times B}}$ contains the following linear order:
  $$
    \Boxed{\le_1^{A \times B}}: \; a_1 b_1 \ldots z_1 \; a_2 b_2 \ldots z_2 \; \ldots \; a_{n-1} b_{n-1} \ldots z_{n-1} \; a_n.
  $$
  But, $a_n \Boxed{\sqsubseteq^{A \times B}} b_n \Boxed{\sqsubseteq^{A \times B}} \ldots \Boxed{\sqsubseteq^{A \times B}} z_n$, so
  we can fully describe $\Boxed{\le_1^{A \times B}}$ as:
  $$
    \Boxed{\le_1^{A \times B}} = a_1 b_1 \ldots z_1 \; a_2 b_2 \ldots z_2 \; \ldots \; a_{n-1} b_{n-1} \ldots z_{n-1} \; a_n b_n \ldots z_n = \Boxed{\alex^{A \times B}}.
  $$

  Let us complete the proof that $\Boxed{\le_2^{A \times B}} = \Boxed{\lex^{A \times B}}$. We have already placed $a_n$, so let
  us now move on to $b_n$. Since $b_{n-1} \mathrel{\sqsubseteq^{A \times B}} b_n$ it follows that
  $b_{n-1} \mathrel{\le_1^{A \times B}} b_n$ and $b_{n-1} \mathrel{\le_2^{A \times B}} b_n$.
  By the induction hypothesis we have that $c_1 \mathrel{\le_1^{A \times B_0}} b_{n-1}$, so
  $c_1 \mathrel{\le_1^{A \times B}} b_{n}$. Then $b_n \mathrel{\le_2^{A \times B}} c_1$ because
  $b_n$ and $c_1$ are incomparable with respect to $\Boxed{\sqsubseteq^{A \times B}}$. Therefore,
  $$
    b_{n-1} \mathrel{\le_2^{A \times B}} b_n \mathrel{\le_2^{A \times B}} c_1,
  $$
  whence follows that $\Boxed{\le_2^{A \times B}}$ contains the following linear order:
  $$
    \Boxed{\le_2^{A \times B}}: \; a_1 a_2 \ldots a_{n-1} a_n \; b_1 b_2 \ldots b_{n-1} \underset\uparrow{b_n} \; c_1 \ldots \; z_1 z_2 \ldots z_{n-1}.
  $$
  We can now repeat the argument to place $c_n$ immediately after $c_{n-1}$, \ldots, $y_n$ immediately after $y_{n-1}$
  showing, thus, that $\Boxed{\le_2^{A \times B}}$ contains the following linear order:
  $$
    \Boxed{\le_2^{A \times B}}: \; a_1 a_2 \ldots a_n \; b_1 b_2 \ldots b_n \; c_1 c_2 \ldots c_n \; \ldots \; 
	y_1 y_2 \ldots y_n \; z_1 z_2 \ldots z_{n-1}.
  $$
  The fact that $z_n$ is the largest element in $(A \times B, \Boxed{\sqsubseteq^{A \times B}})$ places $z_n$ at the very end of the above list,
  showing that $\Boxed{\le_2^{A \times B}} = \Boxed{\lex^{A \times B}}$.
\end{proof}

\begin{LEMB}
  Let $(A, \Boxed{\le^A})$, $(B, \Boxed{\le^B})$, $(C, \Boxed{\le^C})$ and $(D, \Boxed{\le^D})$
  be nonempty finite linear orders, and let $f : (A, \Boxed{\le^A}) \times (B, \Boxed{\le^B}) \hookrightarrow
  (C, \Boxed{\le^C}) \times (D, \Boxed{\le^D})$ be an embedding. Then
  \begin{itemize}
  \item $f : (A \times B, \Boxed{\lex^{A \times B}}, \Boxed{\alex^{A \times B}}) \hookrightarrow (C \times D, \Boxed{\lex^{C \times D}}, \Boxed{\alex^{C \times D}})$ is an embedding, or
  \item $f : (A \times B, \Boxed{\lex^{A \times B}}, \Boxed{\alex^{A \times B}}) \hookrightarrow (C \times D, \Boxed{\alex^{C \times D}}, \Boxed{\lex^{C \times D}})$ is an embedding.
  \end{itemize}
\end{LEMB}
\begin{proof}
  Let $f : (A, \Boxed{\le^A}) \times (B, \Boxed{\le^B}) \hookrightarrow
  (C, \Boxed{\le^C}) \times (D, \Boxed{\le^D})$ be an embedding. In other words,
  $f : (A \times B, \Boxed{\sqsubseteq^{A \times B}}) \hookrightarrow (C \times D, \Boxed{\sqsubseteq^{C \times D}})$ is an embedding.
  Define linear orders $\le_1^{A \times B}$ and $\le_2^{A \times B}$ on $A \times B$ as follows:
  \begin{align*}
    (a_1, b_1) \mathrel{\le_1^{A \times B}} (a_2, b_2) &\text{ iff } f(a_1, b_1) \mathrel{\lex^{C \times D}} f(a_2, b_2), \text{ and}\\
    (a_1, b_1) \mathrel{\le_2^{A \times B}} (a_2, b_2) &\text{ iff } f(a_1, b_1) \mathrel{\alex^{C \times D}} f(a_2, b_2).
  \end{align*}
  Then $f : (A \times B, \Boxed{\le_1^{A \times B}}, \Boxed{\le_2^{A \times B}}) \hookrightarrow (C \times D, \Boxed{\lex^{C \times D}}, \Boxed{\alex^{C \times D}})$ is an embedding.
  On the other hand, it is easy to check that:
  $$
    \Boxed{\sqsubseteq^{A \times B}} = (\Boxed{\le_1^{A \times B}}) \cap (\Boxed{\le_2^{A \times B}}),
  $$
  so, by Lemma~A, it must be the case that
  $\{\Boxed{\le_1^{A \times B}}, \Boxed{\le_2^{A \times B}}\} = \{\Boxed{\lex^{A \times B}}, \Boxed{\alex^{A \times B}}\}$.
\end{proof}

Following \cite{kubis-2022}, in order to show that a countable structure $\Omega$ is a weak \Fraisse\ limit
of an age $\KK$, and hence a generic structure, it suffices to show that $\Omega$ embeds every structure from $\KK$,
and that it is \emph{weakly injective} in the following sense: for every $\calA \in \KK$ and every embedding
$f : \calA \hookrightarrow \Omega$ there is an embedding $e : \calA \hookrightarrow \overline\calA$ with $\overline \calA \in \KK$
such that for every embedding $g : \overline\calA \hookrightarrow \calB$ with $\calB \in \KK$ there is an
embedding $h : \calB \hookrightarrow \Omega$ such that $h \circ g \circ e = f$:
\begin{center}
  \begin{tikzcd}
    \calA \arrow[d, hookrightarrow, "f"'] \arrow[r, hookrightarrow, "e"] & \overline\calA \arrow[r, hookrightarrow, "g"] & \calB \arrow[dll, hookrightarrow, dashed, "h"]\\
	\Omega
  \end{tikzcd}
\end{center}

Instead of the strict version $\calQ = (\QQ, \Boxed{\prec_1}, \Boxed{\prec_2})$ of the generic permutation,
for the constructions that follow it will be more convenient to work with the reflexive version
$\calQ_\refl = (\QQ, \Boxed{\preceq_1}, \Boxed{\preceq_2})$. Define $\preceq$ on $\QQ$ as follows:
$$
  a \preceq b \text{ iff } a \preceq_1 b \land a \preceq_2 b.
$$
Note that $\preceq_1$ and $\preceq_2$ are linear orders, while $\preceq$ is a 2-dimensional partial order on $\QQ$.
So, $(\QQ, \Boxed\preceq)$ is a 2-dimensional poset and Theorem~6.4 shows that this poset has big Ramsey degrees.

\begin{THMC}
  $(\QQ, \Boxed\preceq)$ is the generic 2-dimensional poset.
\end{THMC}
\begin{proof}
  As we have argued above, it suffices to show that $(\QQ, \Boxed\preceq)$ is a weak \Fraisse\ limit of the class $\KK_{\le2}$
  of all finite at most 2-dimensional posets.
  
  \bigskip
  
  Claim 1. $\Age(\QQ, \Boxed\preceq) = \KK_{\le2}$.
  
  Proof. $(\subseteq)$ Let $(A, \Boxed{\preceq^A})$ be a finite poset and $f : (A, \Boxed{\preceq^A}) \hookrightarrow (\QQ, \Boxed\preceq)$
  an embedding. Define linear orders $\le^A_1$ and $\le^A_2$ on $A$ as follows:
  $$
    a \mathrel{\le_i^A} b \text{ iff } f(a) \mathrel{\preceq_i} f(b) \text{ in } \calQ_\refl, \quad i \in \{1, 2\}.
  $$
  Then it is easy to check that $\Boxed{\preceq^A} = (\Boxed{\le_1^A}) \cap (\Boxed{\le_2^A})$ proving that
  $(A, \Boxed{\preceq^A}) \in \KK_{\le 2}$.
  
  $(\supseteq)$ Take any $(A, \Boxed{\preceq^A}) \in \KK_{\le 2}$ and let $\le^A_1$ and $\le^A_2$ be linear orders on $A$
  such that $\Boxed{\preceq^A} = (\Boxed{\le_1^A}) \cap (\Boxed{\le_2^A})$. Then $(A, \Boxed{\le_1^A}, \Boxed{\le_2^A})$
  is a finite permutation, so there is an embedding $f : (A, \Boxed{\le_1^A}, \Boxed{\le_2^A}) \hookrightarrow (\QQ, \Boxed{\preceq_1}, \Boxed{\preceq_2})$
  because $(\QQ, \Boxed{\preceq_1}, \Boxed{\preceq_2})$ is the \Fraisse\ limit of the class of all finite permutations.
  Then it is easy to check that $f$ is also an embedding $f : (A, \Boxed{\preceq^A}) \hookrightarrow (\QQ, \Boxed\preceq)$,
  so $(A, \Boxed{\preceq^A}) \in \Age(\QQ, \Boxed\preceq)$.
  
  \bigskip
  
  Claim 2. $(\QQ, \Boxed\preceq)$ is weakly injective for the class $\KK_{\le2}$.
  
  Proof. Take any $(A, \Boxed{\preceq^A}) \in K_{\le 2}$ and any embedding $f : (A, \Boxed{\preceq^A}) \hookrightarrow (\QQ, \Boxed{\preceq})$.
  Define linear orders $\le_1^A$ and $\le_2^A$ on $A$ as follows:
  $$
    a \mathrel{\le_i^A} b \text{ iff } f(a) \mathrel{\preceq_i} f(b) \text{ in } \calQ_\refl, \quad i \in \{1, 2\}.
  $$
  Then $\Boxed{\preceq^A} = (\Boxed{\le_1^A}) \cap (\Boxed{\le_2^A})$, so
  $e : (A, \Boxed{\preceq^A}) \to (A, \Boxed{\le_1^A}) \times (A, \Boxed{\le_2^A})$ given by $e(a) = (a, a)$ is an embedding.
  
  Now, take any $(B, \Boxed{\preceq^B}) \in \KK_{\le 2}$ and any embedding
  $g : (A, \Boxed{\le_1^A}) \times (A, \Boxed{\le_2^A}) \hookrightarrow (B, \Boxed{\preceq^B})$.
  Since the dimension of $(B, \Boxed{\preceq^B})$ is $\le 2$, there are finite linear orders $\le^B_1$ and $\le^B_2$ on $B$
  such that $\Boxed{\preceq^B} = (\Boxed{\le_1^B}) \cap (\Boxed{\le_2^B})$ and
  $\ell : (B, \Boxed{\preceq^B}) \to (B, \Boxed{\le_1^B}) \times (B, \Boxed{\le_2^B})$ given by $\ell(b) = (b,b)$ is an embedding.
  Then
  $$
    \ell \circ g : (A, \Boxed{\le_1^A}) \times (A, \Boxed{\le_2^A}) \hookrightarrow (B, \Boxed{\le_1^B}) \times (B, \Boxed{\le_2^B})
  $$
  is an embedding, so by Lemma~B one of the two maps bellow is an embedding:
  \begin{itemize}
  \item $\ell \circ g : (A \times A, \Boxed{\lex^{A \times A}}, \Boxed{\alex^{A \times A}}) \to (B \times B, \Boxed{\lex^{B \times B}}, \Boxed{\alex^{B \times B}})$ or
  \item $\ell \circ g : (A \times A, \Boxed{\lex^{A \times A}}, \Boxed{\alex^{A \times A}}) \to (B \times B, \Boxed{\alex^{B \times B}}, \Boxed{\lex^{B \times B}})$.
  \end{itemize}
  Without loss of generality we can assume that $\ell \circ g : (A \times A, \Boxed{\lex^{A \times A}}, \Boxed{\alex^{A \times A}})
  \hookrightarrow (B \times B, \Boxed{\alex^{B \times B}}, \Boxed{\lex^{B \times B}})$ is an embedding.
  
  To conclude the proof, note that $e$ given as above is also an embedding
  $e : (A, \Boxed{\le_1^A}, \Boxed{\le_2^A}) \hookrightarrow (A \times A, \Boxed{\lex^{A \times A}}, \Boxed{\alex^{A \times A}})$,
  and that, by construction of $\Boxed{\le_1^A}$ and $\Boxed{\le_2^A}$, $f$ is also an embedding
  $f : (A, \Boxed{\le_1^A}, \Boxed{\le_2^A}) \hookrightarrow (\QQ, \Boxed{\preceq_1}, \Boxed{\preceq_2})$. Since
  $(\QQ, \Boxed{\preceq_1}, \Boxed{\preceq_2})$ is the \Fraisse\ limit of the class of all finite permutations, there is
  an embedding $k : (B \times B, \Boxed{\alex^{B \times B}}, \Boxed{\lex^{B \times B}}) \hookrightarrow (\QQ, \Boxed{\preceq_1}, \Boxed{\preceq_2})$
  such that $k \circ (\ell \circ g \circ e) = f$:
  \begin{center}
    \begin{tikzcd}[column sep=large]
	  (A, \Boxed{\le_1^A}, \Boxed{\le_2^A}) \arrow[d, hookrightarrow, "f"'] \arrow[r, hookrightarrow, "\ell \circ g \circ e"] & (B \times B, \Boxed{\alex^{B \times B}}, \Boxed{\lex^{B \times B}}) \arrow[dl, hookrightarrow, dashed, "k"]\\
	  (\QQ, \Boxed{\preceq_1}, \Boxed{\preceq_2})
	\end{tikzcd}
  \end{center}
  Again, it is easy to verify that $k$ is also an embedding
  $$
    k : (B \times B, \Boxed{\alex^{B \times B}} \cap \Boxed{\lex^{B \times B}}) \hookrightarrow (\QQ, \Boxed{\preceq}),
  $$
  so recalling that $(B \times B, \Boxed{\alex^{B \times B}} \cap \Boxed{\lex^{B \times B}}) = (B \times B, \Boxed{\sqsubseteq^{B \times B}})
  = (B, \Boxed{\le_1^B}) \times (B, \Boxed{\le_2^B})$, we conclude that the following holds in the category of posets of dimension
  at most~2:
  \begin{center}
    \begin{tikzcd}
	  (A, \Boxed{\preceq^A}) \arrow[d, hookrightarrow, "f"'] \arrow[r, hookrightarrow, "e"] & (A, \Boxed{\le_1^A}) \times (A, \Boxed{\le_2^A}) \arrow[r, hookrightarrow, "g"] & (B, \Boxed{\preceq^B}) \arrow[dll, hookrightarrow, dashed, "k \circ \ell"]\\
	  (\QQ, \Boxed{\preceq})
	\end{tikzcd}
  \end{center}
  This concludes the proof.
\end{proof}

\paragraph{Open problem 1.}
Extend the main results of Section~6 and this Corrigendum to classes of relational
structures of the form $(A, \Boxed{<_1}, \ldots, \Boxed{<_n})$ where
$\Boxed{<_1}$, \ldots, $\Boxed{<_n}$ are linear orders on $A$, $n \ge 3$.

\bigskip

The occurrence of the weak amalgamation property in this setting is not incidental.
Indeed, suppose that $\calK$ is a countable relational structure with big Ramsey degrees.
It then follows by compactness that the finite structures in $\Age(\calK)$ have small Ramsey degrees.
Since the joint embedding property is immediate for $\Age(\calK)$, Theorem 3.2 of \cite{masul-zucker-2024}
implies that $\Age(\calK)$ has the weak amalgamation property and
we are precisely in the framework considered here.

\paragraph{Open problem 2.} Find more examples of weak \Fraisse\ limits with big Ramsey degrees.

\section*{Addendum}

The \emph{profile} of a class $\KK$ of finite relational structures is the integer-valued function $\phi_{\KK} : \NN \to \NN$
that assigns to each nonnegative integer $n$ the number of structures in $\KK$ with $n$ elements, counted up to isomorphism.
The behavior of this function has been extensively studied, particularly in the case where $\KK$ is \emph{hereditary}
(i.e., closed under taking substructures) and consists of objects such as graphs (directed or undirected), tournaments,
ordered sets, ordered graphs, or ordered hypergraphs. Moreover, a result of Cameron~\cite{Cameron2003} shows that the study
of permutations—originating from the Stanley--Wilf conjecture and its resolution by Marcus and Tardos~\cite{MarcusTardos2004}—fits naturally into the framework of profiles of hereditary classes of ordered relational structures (see~\cite{OudrarPouzet2011,OudrarPouzet2016}).

These results demonstrate that the profile cannot exhibit arbitrary growth: there are ``jumps'' in the possible growth rates. Typically, the growth is either polynomial or faster than any polynomial (see~\cite{Pouzet1978,Pouzet2006}). For several classes of structures, the profile grows at least exponentially (e.g., tournaments~\cite{BaloghBollobasMorris2007,BoudabousPouzet2010}, ordered graphs and hypergraphs~\cite{BaloghBollobasMorris2006a,Klazar2008}, and permutations~\cite{KaiserKlazar2003}) or at least as fast as the partition function (e.g., graphs~\cite{BaloghBollobasSaksSos2009}). For further details, see the survey by Klazar~\cite{Klazar2010}.  

As noted in~\cite{oudrar-pouzet-2026}, there are very few cases for which the profile $\phi_\KK$ of a hereditary class
$\KK$ is actually a polynomial. This is why we introduce the following relaxation: we say that $\phi_\KK$ is
\emph{eventually polynomial} if for some non-negative integer $n$, the restriction of $\phi_\KK$ to
$\{n, n+1, n+2, \ldots\}$ is a polynomial function.
In particular, Pouzet and Oudrar proved the following in \cite{oudrar-pouzet-2026}, where an \emph{interval decomposition}
of an ordered relational structure is a monomorphic decomposition of the structure into intervals:

\begin{THMD} \cite[cf.~Theorem 1.1]{oudrar-pouzet-2026}
Let $\KK$ be a hereditary class of finite ordered relational structures over a finite relational language. Then:
\begin{itemize}
    \item
      either there exists an integer $k$ such that every member of $\KK$ has an interval decomposition into at most $k+1$
      blocks, in which case $\KK$ is a finite union of ages of ordered relational structures, each having an interval decomposition
      into at most $k+1$ blocks, and the profile of $\KK$ is eventually polynomial (of degree at most $k$);
    \item
      or the profile of\/ $\KK$ is at least exponential.
\end{itemize}
\end{THMD}

\begin{THME}
  Let $\calS$ be a countable relational structure with a total order relation $<$ in its language.
  If the profile of $\Age(\calS)$ is eventually polynomial, then:
  
  $(a)$ $\calS$ has a finite monomorphic decomposition, and
  
  $(b)$ $\calS$ has finite big Ramsey degrees if and only if every substructure of $\calS$ induced by a block in its
  minimal monomorphic decomposition is chained by a chain having finite big Ramsey degrees.
\end{THME}
\begin{proof}
  $(b)$ follows from $(a)$ by Theorems~4.4 and~4.8, while $(a)$ follows from Theorem~D by an easy application of K\H onig's lemma.
  Let us quickly sketch the argument for $(a)$.
  
  Let $\calS$ be a countable first-order structure with a total order relation $<$ in its language,
  and assume that the profile of $\KK = \Age(\calS)$ is eventually polynomial. Then by Theorem~D we know that
  there exists an integer $k$ such that every member of $\KK$ has an interval decomposition into at most $k+1$
  blocks. Note that if $\calB \le \calC$ are two finite substructures of $\calS$,
  then every interval decomposition of $\calC$ into at most $k+1$ blocks has a restriction that is
  an interval decomposition of $\calB$ into at most $k+1$ blocks.
  Now, take any sequence $\calS_1 \le \calS_2 \le \calS_3 \le \ldots$ of finite substructures of $\calS$
  such that $\bigcup_{i \in \NN} \calS_i = \calS$ and construct a finitely branching tree as follows:
  the root of the tree is the trivial (formal) interval decomposition $\{\0\}$ of the empty set;
  the level $i$ of the tree consists of all interval decompositions of $\calS_i$;
  each interval decomposition of $\calS_i$ is joined by an edge to its restriction to $\calS_{i-1}$.
  By K\H onig's lemma there is an infinite branch. Since each decomposition extends the one that
  precedes it, it is clear how to take the supremum along this branch to get a
  valid interval decomposition of $\calS$ into at most $k+1$ blocks.
  Finally, recall that an interval decomposition into finitely many blocks is a special finite monomorphic decomposition.
\end{proof}

\egroup

\section*{Acknowledgements}

The first author was supported by the Science Fund of the Republic of Serbia, Grant No.~7750027:
Set-theoretic, model-theoretic and Ramsey-theoretic phenomena in mathematical structures: similarity and diversity -- SMART.
The second author would like to thank A. D\v zuklevski for many useful discussions during the preparation of this paper.

\textcolor{blue}{The authors are grateful to Jan Hubi\v cka for bringing to our attention a mistake
in the earlier version of the paper.}

\end{document}